\newcommand{\blackboardbold}[1]{\ensuremath{\mathbf{#1}}}
\def\ZZ{\blackboardbold{Z}}
\def\QQ{\blackboardbold{Q}}
\def\RR{\blackboardbold{R}}
\def\CC{\blackboardbold{C}}
\def\GG{\blackboardbold{G}}
\newcommand{\mathtext}[1]{\ensuremath{\mathrm{#1}}}
\newcommand{\RP}{\ensuremath{\blackboardbold{RP}}}
\newcommand{\CP}{\ensuremath{\blackboardbold{CP}}}
\newcommand{\HP}{\ensuremath{\blackboardbold{HP}}}
\newcommand{\OP}{\ensuremath{\blackboardbold{CaP}}}
\newcommand{\MO}{\ensuremath{\mathtext{MO}}}
\newcommand{\BU}{\ensuremath{\mathtext{BU}}}
\newcommand{\ku}{\ensuremath{\mathtext{ku}}}
\newcommand{\BO}{\ensuremath{\mathtext{BO}}}
\newcommand{\BSO}{\ensuremath{\mathtext{BSO}}}
\newcommand{\SO}{\ensuremath{\mathtext{SO}}}
\newcommand{\BP}{\ensuremath{\mrm{BP}}}
\newcommand{\BBPb}[1]{\ensuremath{\mrm{\mbf{BP}}\langle\mbf{#1}\rangle}}
\newcommand{\bb}[1]{\ensuremath{\langle #1 \rangle}}
\newcommand{\MString}{\MO\bb8}
\newcommand{\MSO}{\mathtext{MSO}}
\newcommand{\MSpin}{\mathtext{MSpin}}
\newcommand{\BSpin}{\mathtext{BSpin}}
\newcommand{\BString}{\BO\bb8}
\newcommand{\tmf}{\mathtext{tmf}}
\newcommand{\susp}{\ensuremath{\Sigma}}
\newcommand{\ko}{\mathtext{ko}}
\newcommand{\sgn}{\mathtext{sgn}}
\newcommand{\pt}{\ensuremath{\mathtext{pt}}}
\newcommand{\til}[1]{\ensuremath{\widetilde{#1}}}
\newcommand{\mrm}[1]{\mathrm{#1}}
\newcommand{\mbf}[1]{\mathbf{#1}}
\newcommand{\Spin}{\ensuremath{\mathrm{Spin}}}
\newcommand{\Ff}{\ensuremath{\mathrm{F}_4}}
\newcommand{\B}{\ensuremath{\mathrm{B}}}
\newcommand{\BG}{\ensuremath{\mathrm{B}G}}
\newcommand{\BH}{\ensuremath{\mathrm{B}H}}
\newcommand{\BT}{\ensuremath{\mathrm{B}T}}
\newcommand{\ET}{\ensuremath{\mathrm{E}T}}
\newcommand{\Bi}{\ensuremath{\mathrm{Bi}}}
\newcommand{\s}{\ensuremath{\mathrm{s}}}
\newcommand{\p}{\ensuremath{\mathrm{p}}}
\newcommand{\W}{\ensuremath{\mathrm{W}}}
\newcommand{\inv}[2]{\big[\tfrac#1#2\big]\!}
\renewcommand{\H}{\mathtext{H}}
\newcommand{\iso}{\ensuremath{\cong}}
\newcommand{\tensor}{\otimes}
\newcommand{\Hom}{\mathtext{Hom}}
\newcommand{\into}{\ensuremath{\hookrightarrow}}
\newcommand{\ord}{\ensuremath{\mathrm{ord}}}
\theoremstyle{plain}
\newtheorem*{theorem*}{Theorem}
\newtheorem{theorem}{Theorem}
\newtheorem*{proposition*}{Proposition}
\newtheorem{proposition}[theorem]{Proposition}
\newtheorem{corollary}[theorem]{Corollary}
\newtheorem*{corollary*}{Corollary}
\newtheorem{lemma}[theorem]{Lemma}
\newtheorem*{lemma*}{Lemma}
\newtheorem*{exercise*}{Exercise}
\newtheorem{conjecture}[theorem]{Conjecture}
\newtheorem*{conjecture*}{Conjecture}
\newtheorem*{question*}{Question}
\theoremstyle{definition}
\newtheorem*{definition*}{Definition}
\newtheorem*{example*}{Example}
\newtheorem*{examples*}{Examples}
\newtheorem*{claim*}{Claim}
\theoremstyle{plain}
\newcommand{\xto}{\xrightarrow}
\title{The Cayley plane and String bordism}
\author{Carl McTague}
\email{mctague@math.jhu.edu}
\urladdr{http://www.mctague.org/carl}
\address{Department of Mathematics, Johns Hopkins University, Baltimore, MD 21218, USA}
\address{Max-Planck-Institute f\"ur Mathematik, Vivatsgasse 7, 53111 Bonn, Germany}
\address{DPMMS, Cambridge University, Wilberforce Road, Cambridge CB3 0WB, England}
\begin{document}

\begin{abstract}
This paper shows that, away from 6, the kernel of the Witten genus is precisely the ideal consisting of (bordism classes of) Cayley plane bundles with connected structure group, but only after restricting the Witten genus to string bordism. It does so by showing that the divisibility properties of Cayley plane bundle characteristic numbers arising in Borel-Hirzebruch Lie-group-theoretic calculations correspond precisely to the divisibility properties arising in the Hovey-Ravenel-Wilson BP-Hopf-ring-theoretic calculation of string bordism at primes $>3$. 
\end{abstract}

\maketitle

\section*{Introduction}

This paper shows that an affinity between bordism rings and projective spaces extends further than previously known.

The first manifestation of the affinity is the fact that \emph{every positive-dimensional element of the unoriented bordism ring $\pi_*\MO$ is represented by a real projective bundle.}
In more detail, Thom \cite{thom54} showed that $\pi_*\MO$ is a polynomial ring over $\ZZ/2$ with one generator in each dimension not of the form $2^k-1$. Milnor \cite{milnor-1965} showed that a smooth degree--$(1,1)$ hypersurface $H \into \RP^i \times \RP^j$ can serve as generator if $1<i<j$ and if $\binom{i+j}i$ is not divisible by~$2$ (equivalently, if there are no `carries' when adding $i$ to $j$ in base $2$, see \cite[Probl.~16-F]{milnor-stasheff74}). If $i\le j$ then the projection $H \to \RP^i$ is a fiber bundle with fiber $\RP^{j-1}$. In fact, Stong \cite[Prop.~8.1]{stong-1973} showed that every positive-dimensional element of $\pi_*\MO$ is represented by an $\RP^2$ bundle. %

The second manifestation of the affinity is the fact that \emph{every  positive-dimensional element of the oriented bordism ring $\pi_*\MSO$ is represented by a complex projective bundle.}
In more detail, $\pi_*\MSO/\mrm{Torsion}$ is a polynomial ring over $\ZZ$ with one generator in each dimension $4k$. In each such dimension, a $\ZZ$-linear combination of smooth degree-$(1,1)$ hypersurfaces $H \into \CP^i \times \CP^j$ can serve as generator. If $i\le j$ then the projection $H \to \CP^i$ is a fiber bundle with fiber $\CP^{j-1}$. Wall \cite{wall-1960} showed that these generators, together with certain of Dold's \cite{dold-1956} generators for $\pi_*\MO$ (all of which are complex projective bundles), generate $\pi_*\MSO$. In fact, F\"uhring \cite{fuehring-2008} showed that every positive-dimensional element of $\pi_*\MSO$ is represented by a $\CP^2$ bundle. We shall return to this manifestation in more detail in the next section.

The third manifestation of the affinity is the fact that \emph{almost every element of the spin bordism ring $\pi_*\MSpin$ is represented by a quaternionic projective bundle}---specifically, the set of quaternionic projective bundles with connected structure group is an ideal of $\pi_*\MSpin$ (indeed, for any space $M$, if $F \to E \to B$ is a fiber bundle then so is $F \to E\times M \to B\times M$), and this ideal is precisely the kernel of the Atiyah invariant:
\begin{align*}
  \alpha : \pi_*\MSpin \to \pi_*\ko \iso
  \ZZ[\eta,\omega,\mu]/(2\eta,\eta^3,\eta\omega,\omega^2-4\mu)
\end{align*}
Here $\eta,\omega,\mu$ have degree 1,4,8 respectively. In more detail, Anderson-Brown-Peterson \cite{anderson-brown-peterson-67} computed $\pi_*\MSpin_{(2)}$ and the forgetful homomorphism $\pi_*\MSpin \to \pi_*\MSO$ becomes an isomorphism after inverting~2. Stolz \cite{stolz-1994} together with Kreck \cite{kreck-stolz-93} used this to show that every element of the kernel of the Atiyah invariant is represented by an $\HP^2$ bundle. ($\HP^2$ is 8-dimensional so $\HP^2$ bundles cannot possibly represent every element of the spin bordism ring.) The Atiyah invariant is thus a complete obstruction for the representability of a spin bordism class by an $\HP^2$ bundle. %

The fourth manifestation of the affinity is the subject of this paper: \emph{almost every element of the string bordism ring $\pi_*\MString$ is represented, at least up to powers of 2 and 3, by a Cayley plane---i.e.\ octonionic projective plane \emph{(}$\OP^2$\emph{)}---bundle.} (The Cayley plane is 16-dimensional so Cayley plane bundles cannot possibly represent every element of the string bordism ring.) Specifically, we prove that:

\begin{theorem}
  \label{thm:ker-witten-genus}
  Away from~6, the ideal of $\pi_*\MString$ consisting of (bordism classes of) Cayley plane bundles with connected structure group is precisely the kernel of the Witten genus. In other words, the extension of this ideal in $\pi_*\MString\inv16$ is precisely the kernel of:
  \begin{align*}
    \phi_{\mathrm{W}} \tensor \ZZ\inv16 \;:\; \pi_*\MString\inv16 \;\to\; \pi_*\tmf\inv16 \iso \ZZ\inv16[\GG_4,\GG_6]
  \end{align*}
  where $\GG_4, \GG_6$ have degree $8,12$ respectively.
\end{theorem}

The Witten genus is thus a complete obstruction for the representability of a string bordism class by a $\OP^2$ bundle, at least up to a powers of 2 and 3.

An interesting complication here is that Theorem~\ref{thm:ker-witten-genus} only appears to be true after restricting the Witten genus to string bordism. In other words, not every element of the kernel of the quasi-modular-form-valued Witten genus $\pi_*\MSO\inv16 \to \ZZ\inv16[\mathbf{G}_2, \mathbf{G}_4, \mathbf{G}_6]$ appears to be represented by a $\OP^2$ bundle. Far from it, in fact: the subring of $\pi_*\MSO\inv16$ generated by total spaces of oriented $\OP^2$ bundles (and string manifolds of dimension $<16$) appears to coincide with the image of the forgetful homomorphism $\pi_*\MString\inv16 \to \pi_*\MSO\inv16$. As we shall see, this homomorphism is the inclusion of an intricate, non-polynomial subring.

\bigskip

That Cayley plane bundles lie in the kernel of the Witten genus is already known:

\begin{theorem*}
  If $\OP^2 \to E \to W$ is a Cayley plane bundle with connected structure group then the Witten genus of $E$ vanishes.
\end{theorem*}

\label{90sHistory} This result was often proved in the 1990's---by Jung, Kreck-Singhof-Stolz, Dessai, H\"ohn---but rarely published. Rainer Jung's proof, which has yet to appear in print, used the work of Borel-Hirzebruch summarized below to show that the vanishing of the Witten genus on Cayley plane bundles is equivalent to the Jacobi triple identity for the Weierstrass sigma function. A little later Anand Dessai proved, using results of Kefeng Liu \cite{liu-1992}, that if $\mathrm{S}^3$ acts nontrivially on a string manifold $E$ then the Witten genus of $E$ vanishes. (This generalizes the theorem above since $\mathrm{S}^3$ acts nontrivially on the total space of any Cayley plane bundle.) Dessai's work appeared in the preprint \cite{dessai-1994}, in his PhD thesis \cite{dessai-1996}, and in the conference proceedings \cite{dessai-2009}. Around the same time Gerald H\"ohn proved, again using results of Liu, that the Witten genus of any string homogeneous manifold vanishes. These results helped inspire Stephan Stolz's conjecture \cite[see Thm.~3.1]{stolz96} that the Witten genus of a closed $4k$-dimensional string manifold vanishes if and only if it admits a Riemannian metric of positive Ricci curvature. (The author thanks Dessai for informing him of the history of these results.)

In fact, Jung and Dessai both proved the rational version of Theorem~\ref{thm:ker-witten-genus}:
\begin{theorem*}
  Rationally, the ideal of $\pi_*\MString$ consisting of (bordism classes of) Cayley plane bundles with connected structure group is precisely the kernel of the Witten genus. In other words, the extension of this ideal in $\pi_*\MString\tensor\QQ$ is precisely the kernel of:
  \begin{align*}
    \phi_{\mathrm{W}} \tensor \QQ \;:\; \pi_*\MString \tensor \QQ \;\to\; \pi_*\tmf \tensor \QQ \iso \QQ[\GG_4,\GG_6]
  \end{align*}
\end{theorem*}

Since stable rational homotopy theory is trivial, rational results are unsatisfying to homotopy theorists. This paper does not tackle the primes 2 or 3, the primes at which $\tmf$ is most interesting. But the author has no reason to be pessimistic about those primes and hopes that homotopy theorists will be pleased to see geometry in alignment at the primes $>3$. As far as the author knows, this paper gives the first geometrically explicit list of generators for $\pi_*\MString\inv16$.

\smallskip

Note that $\tmf\inv16$ is not a ring spectrum quotient of $\MString\inv16$. In~fact, for any prime $p>3$ and any sequence $X$ in $\pi_*\MString$, the $\pi_*\MString$-module: $$\pi_*\big(\MString_{(p)}/X\big)$$ is not (even abstractly) isomorphic to $\pi_*\tmf_{(p)}$ \cite{mctague-2013}.

\medskip

Throughout this paper the italic letter $p$ will denote a prime number. The roman letter~$\p$ will denote the Pontrjagin class.

\section{Pontrjagin numbers and oriented bordism}
\label{section:pontrjagin}

This section briefly reviews background material on Pontrjagin classes and the oriented bordism ring. This serves both to fix notation as well as to illustrate how the results of this paper extend well-known calculations.

The $i$th Pontrjagin class of a real vector bundle $V$ is by definition $\p_i(V)=(-1)^i\mrm{c}_{2i}(V \tensor \CC)$. It pulls back from the universal $i$th Pontrjagin class $\p_i$ in $\H^*(\BO(4n),\ZZ)$ for $n\ge i$, which in turn may be identified with the $i$th elementary symmetric polynomial. This is because the $i$th Pontrjagin class of a sum of complex line bundles is the $i$th elementary symmetric polynomial in the first Pontrjagin classes of the individual line bundles, $\p(L_1\oplus \cdots \oplus L_n)=\prod (1+\p_1(L_i))$. (The driving force behind this is the fact that, in ordinary cohomology, the total Chern class is exponential, $\mrm{c}(V_1 \oplus V_2)=\mrm{c}(V_1) \cdot \mrm{c}(V_2)$.)

It is a basic fact that the ring of symmetric polynomials is a polynomial ring on the elementary symmetric polynomials. There are other symmetric polynomials of geometric interest, though. Given a partition $I=i_1,\dots,i_r$ let $\s_I$ denote the polynomial $\sum \p_1(L_1)^{i_1} \cdots \p_1(L_r)^{i_r}$ where the sum runs over all distinct monomials obtained by permuting $L_1,\dots,L_n$. Each $\s_I$ is a symmetric polynomial, so may be written as a polynomial in the elementary symmetric polynomials. Thus we may associate to each $\s_I$ a polynomial in the Pontrjagin classes, which we also denote $\s_I$. Note in particular that $\s_1,\s_{1,1},\s_{1,1,1},\dots$ are the Pontrjagin classes $\p_1,\p_2,\p_3,\dots$ themselves. The geometric significance of the classes $\s_I$ comes from the following lemma (Lemma~16.2 of \cite{milnor-stasheff74}).

\pagebreak[3] %

\begin{lemma}[Thom]
  \label{lemma:thom}
  If $0 \to V_1 \to W \to V_2 \to 0$ is an exact sequence of vector bundles then:
  \begin{align*}
    \s_I(W) = \sum_{JK=I} \s_J(V_1) \; \s_K(V_2)
  \end{align*}
  where the sum ranges over all partitions  $J$ and $K$ with juxtaposition $JK$ equal to $I$.
\end{lemma}

This implies that $\s_n$ of the tangent bundle of a nontrivial product of closed oriented manifolds vanishes. In fact, a closed oriented manifold $M^{4n}$ is decomposable in $\pi_*\MSO\inv12$ if and only if the number $\s_n[M^{4n}]=\int_M\s_n(\mrm{T}M)$ equals zero. (The integral $\int_M$  here denotes the pushforward to a point $\H^{4n}(M) \to \H^0(\pt) \iso \ZZ$, equivalently, the Kronecker pairing $\langle \s_n(\mrm{T}M),[M]\rangle$ with the fundamental class $[M] \in \H_{4n}(M,\ZZ)$.) Since $\pi_*\MSO \tensor \QQ$ is a polynomial ring over $\QQ$ with one generator in each dimension $4n\ge4$, a sequence $\{M^{4n}\}_{n\ge1}$ therefore generates $\pi_*\MSO \tensor \QQ$ if and only if $\s_n[M^{4n}]\ne0$ for each $n\ge1$. As mentioned in the introduction, however, inverting just the prime 2 is enough to make $\pi_*\MSO$ a polynomial ring. It follows that the numbers $\s_n$ suffice to recognize a sequence of generators for $\pi_*\MSO\inv12$ but it turns out that these numbers have unexpected divisibility properties.

For any integer $n$ and any prime $p$ let $\ord_p(n)$ denote the $p$-adic order of $n$, that is, the largest integer $\nu$ such that $p^\nu$ divides $n$.

\begin{theorem}[{cf.~\cite[p.~180]{stong-1968}}]
  \label{thm:mso-hurewicz}
  A sequence $\{M^{4n}\}_{n\ge1}$ generates $\pi_*\MSO\inv12$ if and only~if:
 \begin{itemize}
 \item For any integer $n>0$ and any odd prime $p$:
   \begin{align*}
     \ord_p \big( \s_n[M^{4n}] \big) =
     \begin{cases}
       1 & \text{if $2n=p^i-1$ for some integer $i>0$} \\
       0 & \text{otherwise}
     \end{cases}
   \end{align*}
 \end{itemize}
\end{theorem}

Equivalently, if $p$ is odd then the Hurewicz homomorphism $\pi_*\MSO_{(p)} \to \H_*\MSO_{(p)}$, after passing to indecomposable quotients, is multiplication by $\pm p$ in degrees of the form $2(p^i-1)$ and is an isomorphism otherwise. (See \cite[Thm.~3.1.5]{ravenel-1986} where the special behavior in degrees $2(p^i-1)$ ultimately comes from the degrees of the generators $v_i$ of $\pi_*\BP$.)

\medskip

Now we return to the second manifestation of the affinity discussed in the introduction.

\begin{proposition*} %
  If $H \into \CP^i \times \CP^{2n-i+1}$ is a smooth complex hypersurface of degree $(1,1)$ and $1<i<2n$ then:
  \begin{align*}
    \s_n[H] = - \binom{2n+1}{i}
  \end{align*}
\end{proposition*}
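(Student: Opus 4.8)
The plan is to compute the class $\s_n(\mrm{T}H) \in \H^{4n}(H)$ explicitly and then integrate. Set $X = \CP^i \times \CP^{2n-i+1}$, write $a, b \in \H^2(X)$ for the pullbacks of the hyperplane generators of the two factors, and let $\iota \colon H \hookrightarrow X$ be the inclusion. Since $H$ is a degree-$(1,1)$ hypersurface, its Poincar\'e dual is $[H] = a+b$ and its normal bundle is $N = \mathcal{O}(1,1)|_H$, with $\mrm{c}_1(N) = (a+b)|_H$. From the exact sequence $0 \to \mrm{T}H \to \iota^*\mrm{T}X \to N \to 0$ and the case $I = (n)$ of Thom's Lemma, for which $\s_\emptyset = 1$ forces $\s_n$ to be additive, one gets $\s_n(\mrm{T}H) = \iota^*\s_n(\mrm{T}X) - \s_n(N)$.

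Both terms on the right are easy to evaluate. A complex line bundle $L$ has a single Pontrjagin root $\mrm{c}_1(L)^2$, so $\s_n(L) = \mrm{c}_1(L)^{2n}$, whence $\s_n(N) = (a+b)^{2n}|_H$. For the other term, $\mrm{T}X = \pi_1^*\mrm{T}\CP^i \oplus \pi_2^*\mrm{T}\CP^{2n-i+1}$, and additivity of $\s_n$ applied to the Euler sequence $0 \to \underline{\CC} \to \mathcal{O}(1)^{\oplus(m+1)} \to \mrm{T}\CP^m \to 0$ gives $\s_n(\mrm{T}\CP^m) = (m+1)h^{2n}$ for $h$ the hyperplane class. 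Thus $\s_n(\mrm{T}H) = \big[(i+1)a^{2n} + (2n-i+2)b^{2n} - (a+b)^{2n}\big]\big|_H$, and since integration over $H$ of a class pulled back from $X$ is integration over $X$ against $[H] = a+b$,
\begin{align*}
  \s_n[H] = \int_X \big[(i+1)a^{2n} + (2n-i+2)b^{2n} - (a+b)^{2n}\big](a+b) .
\end{align*}

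It remains to read off this integral, and this is the one step that uses the hypotheses $1 < i < 2n$. The $(i+1)a^{2n}$-term produces only monomials of $a$-degree at least $2n$, which vanish in $\H^*(\CP^i)$ because $2n > i$; the $(2n-i+2)b^{2n}$-term produces only monomials of $b$-degree at least $2n$, which vanish in $\H^*(\CP^{2n-i+1})$ because $i > 1$ forces $2n > 2n-i+1$; and in $-(a+b)^{2n+1}$ the unique monomial $a^k b^\ell$ with $\int_X a^k b^\ell = 1$ is $a^i b^{2n+1-i}$, occurring with coefficient $-\binom{2n+1}{i}$. Hence $\s_n[H] = -\binom{2n+1}{i}$. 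The computation is routine throughout; the only place demanding attention is this final bookkeeping, where both inequalities (and the asymmetric roles of the two projective factors) are genuinely needed.
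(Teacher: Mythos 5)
Your proof is correct and follows essentially the same route as the paper's: both reduce to $\s_n(\mrm{T}H) = \iota^*\s_n(\mrm{T}X) - \s_n(N)$ via Thom's lemma, observe that the tangent-bundle contribution vanishes because of the hypotheses $1<i<2n$, and then evaluate $-\int_X(a+b)^{2n+1}$. The only cosmetic difference is that you compute $\s_n(\mrm{T}X) = (i+1)a^{2n} + (2n-i+2)b^{2n}$ explicitly from the Euler sequence before seeing it vanish, whereas the paper invokes the general remark that $\s_n$ of a nontrivially split bundle over a product vanishes for dimension reasons.
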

\begin{proof}
  Since the tangent bundle of the ambient manifold $\CP^i \times \CP^{2n-i+1}$ splits nontrivially, Lemma~\ref{lemma:thom} implies that $\s_n(\mrm{T}H)=-\s_n (\mrm{N}H)$ where the normal bundle $\mrm{N}H$ is isomorphic to the complex line bundle:
  \begin{align*}
    \mrm{O}(1,1)|_H=\big(\pi_1^*\mrm{O}(1) \tensor \pi_2^*\mrm{O}(1)\big)\big|_H
  \end{align*}
  and $\pi_1,\pi_2$ are the projections of the ambient manifold. Since for a complex line bundle $\p_1=\mrm{c}_1^2$ and since in ordinary cohomology $\mrm{c}_1(L_1 \tensor L_2)=\mrm{c}_1(L_1)+\mrm{c}_1(L_2)$, it follows that:
  \begin{align*}
    \s_n(\mrm{O}(1,1))
    =\p_1(\mrm{O}(1,1))^n
    =\mrm{c}_1(\mrm{O}(1,1))^{2n}
    =(x_1+x_2)^{2n}
  \end{align*}
  where $x_j=\pi_j^*\mrm{c}_1(\mrm{O}(1))$. Thus:
  \begin{align*}
    \s_n[H] &= -\int_H \s_n(\mrm{O}(1,1)|_H)
    = -\int_H \left(x_1+x_2\right)^{2n}\Big|_H
    \intertext{By Poincar\'e duality then (see \cite[Prob.~16-D]{milnor-stasheff74}):}
    \s_n[H] &= -\int_{\CP^i \times \CP^{2n-i+1}} (x_1+x_2)^{2n+1} = -\binom{2n+1}i \qedhere
  \end{align*}
\end{proof}

Kummer's theorem, which states that $\ord_p\big[\binom{n}{i}\big]$ equals the number of `carries' when adding $i$ to $n-i$ in base $p$ 
(see \cite[\S1]{granville97}), can be used to show that:

\begin{lemma*}
  For any integer $n>0$ and any odd prime $p$:
  \begin{align*}
    \ord_p \left[ \mathop{\mathrm{GCD}}_{1<i<2n} \binom{2n+1}{i} \right] =
    \begin{cases}
      1 & \text{if $2n+1=p^i$ for some integer $i>0$} \\
      0 & \text{otherwise}
    \end{cases}
  \end{align*}
\end{lemma*}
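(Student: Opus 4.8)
The plan is to translate the statement into a question about base-$p$ digits by way of Kummer's theorem and then settle it by a short counting argument. Write $m=2n+1$ with base-$p$ expansion $m=\sum_k a_k p^k$, and set $g:=\mathop{\mathrm{GCD}}_{1<i<2n}\binom{m}{i}$. Since $\ord_p$ of a GCD is the minimum of the individual $\ord_p$'s, $\ord_p g=\min_{1<i<2n}\ord_p\binom{m}{i}$, and by Kummer's theorem this is the least number of carries possible when $i$ is added to $m-i$ in base $p$, over $2\le i\le m-2$. In particular, via the ``no carries'' special case (Lucas's criterion), $p\nmid\binom{m}{i}$ exactly when each base-$p$ digit of $i$ is at most the corresponding digit of $m$; so $\ord_p g=0$ as soon as some $i$ with $1<i<2n$ is digitwise dominated by $m$, and $\ord_p g\ge1$ otherwise. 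I would dispose of the degenerate case $n=1$, where the index set is empty, at the outset and assume $n\ge2$ throughout.

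First I would treat $m=p^j$. Then $m$ has exactly one nonzero digit, equal to $1$, so the only $i$ in $[0,m]$ digitwise dominated by $m$ are $i=0$ and $i=m$, both outside the open range; hence every $\binom{m}{i}$ with $1<i<2n$ is divisible by $p$ and $\ord_p g\ge1$. For the reverse inequality I would exhibit one index in the open range producing a single carry: for $j\ge2$ the index $i=p^{j-1}$ works, since then $m-i=(p-1)p^{j-1}$ and adding $i$ to $m-i$ carries exactly once, out of position $j-1$; for $j=1$ (so $p\ge5$, as $n\ge2$) the index $i=2$ works, since $2+(p-2)=p$ carries once. Either way $\ord_p g=1$.

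Next I would handle the case where $m$ is not a power of $p$, where it suffices to produce a single $i$ with $1<i<2n$ digitwise dominated by $m$. The number of $i$ in $[0,m]$ digitwise dominated by $m$ is $N:=\prod_k(a_k+1)$, and since $m\ge3$ is not a power of $p$ one has $N\ge3$. The four endpoints $0,1,m-1,m$ are distinct (as $m\ge3$); of these, $0$ and $m$ are always dominated, while $1$ and $m-1$ are dominated precisely when $a_0\ge1$. If $a_0=0$, the open range therefore contains $N-2\ge1$ dominated indices and we are finished. If $a_0\ge1$, it contains $N-4$ dominated indices, so I would finish by ruling out $N\le4$: oddness of $m$ kills $N=3$ (which would force $m=2p^k$) and also the subcase $m=p^k+p^\ell$ of $N=4$ (with $a_0\ge1$ this forces $m=1+p^\ell$, again even), while the remaining subcase $m=3p^k$ of $N=4$ forces $k=0$, i.e.\ $m=3$ and $n=1$, already excluded. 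Hence $N\ge5$ and the open range again contains a dominated index, so $\ord_p g=0$.

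The step I expect to cost the most care is the bookkeeping in the last paragraph: one must track exactly which of the boundary indices $0,1,m-1,m$ are digitwise dominated by $m$ (which turns on whether $p\mid m$), and then verify that the open interval $1<i<2n$ still has room for a dominated index --- and it is precisely here that the oddness of $2n+1$ must be invoked, to eliminate the small values of $N$.
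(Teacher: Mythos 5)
Your proof is correct, and it takes exactly the route the paper itself points to: the paper gives no written argument for this lemma, only the remark that Kummer's theorem ``can be used to show'' it, and your digit-counting (Kummer for the lower bound, Lucas for the upper bound, and the enumeration $N=\prod_k(a_k+1)$ of digitwise-dominated indices) is precisely the missing content. The one thing worth making explicit is the $n=1$ case you set aside: there the index set $1<i<2$ is empty, so the lemma as literally stated (``for any integer $n>0$'') is vacuous or ill-posed, and the paper's application only ever needs $n\ge2$; your decision to exclude it at the outset is the right one.
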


It follows that $\ZZ$-linear combinations of the hypersurfaces appearing in the proposition generate $\pi_*\MSO\inv12$, as asserted in the introduction.

\smallskip

In short, then, the divisibility properties of $\s_n$ for oriented manifolds, deduced from homotopy theory, align perfectly with the divisibility properties of $\s_n$ for $\CP^n$ bundles, deduced from divisibility properties of binomial coefficients.

This paper will follow the same outline. First we will deduce the divisibility properties of $\s_n$ (and $\s_{n,n'}$) for string manifolds from known results in homotopy theory. Then we will show that these divisibility properties align perfectly with the divisibility properties of $\s_n$ (and $\s_{n,n'}$) for Cayley plane bundles, which we will in turn deduce from divisibility properties of binomial coefficients. The arguments and calculations will at each stage be more complicated than for oriented bordism and complex projective bundles, but the outline and spirit will be the same.

\section{How to recognize generators for string bordism}
\label{section:homotopy}

In the preceding section we stated a criterion (Theorem~\ref{thm:mso-hurewicz}), involving the number $\s_n$, which ensures that a sequence $\{M^{4n}\}_{n\ge1}$ generates $\pi_*\MSO\inv12$. The purpose of this section is to establish an analogous criterion (Theorem~\ref{thm:mstring-gen}) for the string bordism ring $\pi_*\MString\inv16$. It turns out that Pontrjagin numbers still suffice to distinguish elements of $\pi_*\MString\inv16$ but, since this ring is not a polynomial ring, the numbers $\s_n$ do not suffice to recognize generators; certain numbers of the form $\s_{n,n'}$ are also needed. As we shall see, the criterion is a consequence of Hovey's calculation \cite{hovey-2008} of $\pi_*\MString_{(p)}$ for $p>3$.

\smallskip

First recall what string bordism is. Any real vector bundle $V \to X$ of rank $k$ pulls back from the universal rank-$k$ bundle over the classifying space $\BO(k)$ by a map $f:X \to \BO(k)$. 
\begin{wrapfigure}{r}{100pt}
  $
  \xymatrix@C40pt@R12pt{
    & \BO\bb8 \ar[d] \\
    & \BSpin \ar[d] \\
    & \BSO \ar[d] \\
    X \ar[r]^f \ar@(ur,l)@{-->}[ur]^{f_2} \ar@(u,l)@{-->}[uur]^{f_4} \ar@(ul,l)@{-->}[uuur]^{f_8} & \BO
  }
  $
\end{wrapfigure}

\vspace{-1.2em}
\begin{itemize}
\item An \emph{orientation} of $V$ is a (homotopy class of) lift $f_2$ of $f$ to the 1-connected cover $\BSO\to\BO$. Such a lift exists if and only if the generator $\mrm{w}_1$ of $\H^1(\BO,\ZZ/2)$ pulls back to 0 in $\H^1(X,\ZZ/2)$.
\item A \emph{spin structure} on $V$ is a (homotopy class of) lift $f_4$ of $f_2$ to the 3-connected cover $\BSpin\to\BSO$. Such a lift exists if and only if the generator $\mrm{w}_2$ of $\H^2(\BSO,\ZZ/2)$ pulls back to 0 in $\H^2(X,\ZZ/2)$.
\item A \emph{string structure} on $V$ is a (homotopy class of) lift $f_8$ of $f_4$ to the 7-connected cover $\BString\to\BSpin$. Such a lift exists if and only if the generator $\tfrac12\p_1$ of $\H^4(\BSpin,\ZZ)$ pulls back to 0 in $\H^4(X,\ZZ)$.
\end{itemize}

The bordism spectrum of string manifolds $\MString$ is the Thom spectrum of the map $\BString \to \BO$. Its coefficient ring $\pi_*\MString$ is the bordism ring of manifolds equipped with a string structure on their stable normal bundle.

\pagebreak[3] %

\begin{theorem}
  \label{thm:mstring-gen}
  A set $S$ generates $\pi_*\MString\inv16$ if:
  \begin{enumerate}
  \item For each integer $n>1$, there is an element $M^{4n}$ of $S$ such that for any prime $p>3$:
    \begin{align*}
      \ord_p \big( \s_n[M^{4n}] \big) =
      \begin{cases}
        1 & \text{if $2n=p^i-1$ or $2n=p^i+p^j$ for some integers $0 \le i \le j$} \\
        0 & \text{otherwise}
      \end{cases}
    \end{align*}
  \item For each prime $p>3$ and each pair of integers $0<i<j$, there is an element $N^{2(p^i+p^j)}$ of $S$ such that:
    \begin{align*}
      \s_{(p^i+p^j)/2}[N^{2(p^i+p^j)}]&=0
      \intertext{but:}
      \s_{(p^i+1)/2,(p^j-1)/2}[N^{2(p^i+p^j)}] &\not\equiv 0 \mod p^2
    \end{align*}
  \end{enumerate}
\end{theorem}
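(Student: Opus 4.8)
The plan is to deduce Theorem~\ref{thm:mstring-gen} from Hovey's computation of $\pi_*\MString_{(p)}$ for $p>3$ \cite{hovey-2008}, which identifies this ring via the Hovey--Ravenel--Wilson $\BP$-Hopf-ring calculation. First I would recall the structural input: at each prime $p>3$, $\pi_*\MString_{(p)}$ is, as a $\pi_*\MSO_{(p)}$-module (or rather as a ring with known indecomposables), built from polynomial generators together with a single ``extra'' family of generators sitting in degrees $2(p^i+p^j)$ with $0<i<j$ detected by a secondary phenomenon — these are exactly the non-polynomial generators alluded to in the introduction. The degrees $4n$ with $2n=p^i-1$ are the usual $\BP$-generator degrees $2(p^k-1)$ pulled into oriented bordism (cf.\ the $\pi_*\MSO\inv12$ theorem quoted in \S\ref{section:pontrjagin}), while the degrees $4n$ with $2n=p^i+p^j$ ($0\le i\le j$, allowing $i=0$) are the degrees into which the string-specific generators contribute a decomposable-modulo-$p$ discrepancy.

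The key steps, in order. \emph{Step 1.} Using Hovey's description, identify for each prime $p>3$ a minimal generating set of $\pi_*\MString_{(p)}$ and compute the image of each generator under the Hurewicz map $\pi_*\MString_{(p)}\to\H_*\MString_{(p)}$, passed to indecomposables; this is where the numbers $\ord_p(\s_n)$ enter, since $\s_n$ is (up to sign and lower-order terms) the coefficient of the corresponding primitive in $\H_{4n}$, exactly as in Ravenel \cite[Theorem~3.1.5]{ravenel-1986}. This yields condition~(1): a class $M^{4n}$ of $S$ maps to a generator in degree $4n$ precisely when $\ord_p(\s_n[M^{4n}])$ takes the stated value, the ``$1$'' occurring in the degrees where Hovey's calculation forces a factor of $p$ in the Hurewicz image and ``$0$'' otherwise. \emph{Step 2.} In the degrees $2(p^i+p^j)$ with $0<i<j$, $\s_n$ alone cannot see the string-specific generator $N$ because that generator is decomposable in $\pi_*\MSO_{(p)}$ (indeed lies in the kernel of $\s_n$ on oriented manifolds, hence condition $\s_{(p^i+p^j)/2}[N]=0$); one must instead use the length-two symmetric number $\s_{(p^i+1)/2,(p^j-1)/2}$, which I would show detects the relevant primitive in $\H_*\MString_{(p)}$ modulo $p^2$ — giving condition~(2). \emph{Step 3.} Assemble: if $S$ satisfies (1) and (2) then, prime by prime, the classes of $S$ hit a generating set of $\pi_*\MString_{(p)}$; since $6$ is inverted and the relevant structure is concentrated at primes $>3$, a Nakayama/localization argument over $\ZZ\inv16$ upgrades this to $S$ generating $\pi_*\MString\inv16$.

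I expect the main obstacle to be \emph{Step 2}: translating Hovey's Hopf-ring-theoretic description of the string-specific generators into a concrete statement about the Pontrjagin number $\s_{n_1,n_2}$ modulo $p^2$. One must pin down exactly which primitive in $\H_*\MString_{(p)}$ the generator in degree $2(p^i+p^j)$ detects, show that the length-one number $\s_n$ is insensitive to it (the easy half, forced by decomposability in oriented bordism via Thom's lemma), and then show that a specific length-two number is sensitive to it with the sharp $p^2$ (not merely $p$) threshold — the factor of $p^2$ rather than $p$ being precisely what distinguishes these generators from the ordinary ones and what makes $\tmf$ fail to be a ring-spectrum quotient (\S\ref{sec:noX}). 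Keeping careful track of which pairs $(i,j)$ with $i=0$ versus $i>0$ contribute, and reconciling the ``$0\le i\le j$'' in~(1) with the ``$0<i<j$'' in~(2), is the bookkeeping that has to be done correctly here.
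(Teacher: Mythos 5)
Your proposal correctly identifies the central input (Hovey's computation of $\pi_*\MString_{(p)}$ for $p>3$) and the shape of the statement to prove, but it both misses the paper's structural reduction and leaves a genuine gap exactly where you flag the ``main obstacle.'' The paper does not analyze the Hurewicz map $\pi_*\MString_{(p)}\to\H_*\MString_{(p)}$ directly; instead it first proves the forgetful map $\pi_*\MString\inv16\to\pi_*\MSpin\inv16$ is \emph{injective} (Giambalvo: no $p$-torsion for $p>3$), so that Pontrjagin numbers already detect equality in $\pi_*\MString\inv16$, and the problem becomes one of identifying the \emph{image} of $\pi_*\MString_{(p)}$ inside the \emph{polynomial ring} $\pi_*\MSpin_{(p)}\iso\pi_*\MSO_{(p)}$, where the $\s_n$-criterion for generators is already available as a black box. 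Via the Hovey--Ravenel--Wilson splitting this further reduces to understanding a single Hopf-ring map $[v_1]_*:\BP_*\BBPb1_{2p+2}\to\BP_*\BBPb1_4$. That reduction to a known polynomial target is the structural insight your plan is missing; a de novo analysis of $\H_*\MString_{(p)}$ is not what the paper does and would be harder.

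The gap you acknowledge in Step~2 is where the proposal falls short of a proof: you offer no mechanism for the sharp mod-$p^2$ statement about $\s_{(p^i+1)/2,(p^j-1)/2}$. The paper supplies it by circle-multiplying Hovey's relation $[v_1]\circ b_{(i-1)}^{\circ p}\equiv v_i\cdot b_{(0)}-p\cdot b_{(i)}-b_{(i-1)}^{*p}$ (modulo $\mrm{I}^2\cdot\mrm{I}(2)+\mrm{I}\cdot\mrm{I}(2)^{*2}+\mrm{I}(2)^{*p+1}$) by $b_{(j)}$ to compute $[v_1]_*$ of each of Hovey's generators $w,y,z,x$ in $\BP_*\BBPb1_{2p+2}$; it then observes that $[v_1]_*(z_{2(p^i+p^j)}-y_{2(p^i+p^j)})\equiv v_i\, x_{2(1+p^j)}-v_j\, x_{2(1+p^i)}$, which annihilates $\s_{(p^i+p^j)/2}$ (being decomposable in the polynomial ring) but has $\s_{(p^i+1)/2,(p^j-1)/2}$-order exactly $1$, whereas $v_j\cdot y_{2(1+p^i)}$ has $\s_{(p^i+1)/2,(p^j-1)/2}$-order $2$. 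Without these explicit circle-product computations, and the careful bookkeeping of which error terms the numbers $\s_n$ and $\s_{n_1,n_2}$ are blind to (they vanish on the ideal $\mrm{I}^2\cdot\mrm{I}(4)+\mrm{I}\cdot\mrm{I}(4)^{*2}+\mrm{I}(4)^{*p+1}$), you have the right target but not an argument.
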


\medskip

We prove this in stages.

\begin{proposition*}
  The forgetful homomorphism:
  \begin{align*}
    \pi_*\MString\inv16 \to \pi_*\MSpin\inv16
  \end{align*}
  is injective.
\end{proposition*}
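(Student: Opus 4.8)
The plan is to reduce the statement to two separate facts—one integral, one rational—and combine them by the standard torsion argument: for a homomorphism $f\colon A\to B$ of abelian groups with $A$ torsion-free, the map $A\to A\tensor\QQ$ is injective, so $f$ is injective as soon as $f\tensor\QQ$ is. Accordingly I would show that $\pi_*\MString\inv16$ is torsion-free, and that the forgetful homomorphism becomes injective after applying $-\tensor\QQ$.

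For torsion-freeness: since $\BString$ has finite type, each $\pi_n\MString$ is a finitely generated abelian group, so the torsion subgroup of $\pi_n\MString\inv16$ is a finite sum of cyclic $p$-groups with $p>3$, and it is enough to show that $\pi_*\MString_{(p)}$ is torsion-free for every prime $p>3$. This is precisely what Hovey's calculation \cite{hovey-2008} delivers: for $p>3$ the spectrum $\MString_{(p)}$ splits as a wedge of suspensions of $\BP$ and $\BP\bb2$, whose homotopy rings $\ZZ_{(p)}[v_1,v_2,\dots]$ and $\ZZ_{(p)}[v_1,v_2]$ are polynomial, hence torsion-free, so any direct sum of their suspensions is torsion-free as well. (Note this is consistent with $\pi_*\MString\inv16$ itself being a non-polynomial ring: the splitting is of spectra, not of rings.)

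For rational injectivity I would argue with characteristic numbers. Rationally, both $\MString$ and $\MSpin$ are products of suspensions of the rational Eilenberg--MacLane spectrum, so the Hurewicz maps $\pi_*(-)\tensor\QQ\to\H_*(-;\QQ)$ are isomorphisms; via the Thom isomorphism the relevant rational cohomology rings are $\H^*(\BString;\QQ)\iso\QQ[\p_2,\p_3,\dots]$ and $\H^*(\BSpin;\QQ)\iso\QQ[\p_1,\p_2,\dots]$. The forgetful map is induced by $\BString\to\BSpin$, and on rational cohomology this is the quotient that kills $\p_1$ (the cohomology of the $7$-connected cover is obtained by killing $\tfrac12\p_1$); a quotient map is surjective, so its dual on rational homology is injective, whence $\pi_*\MString\tensor\QQ\to\pi_*\MSpin\tensor\QQ$ is injective. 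In bordism language this just says: a closed string manifold that bounds rationally as a spin manifold has every Pontrjagin number equal to zero, and since rational string bordism is detected by Pontrjagin numbers it already bounds rationally as a string manifold.

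The only genuinely non-formal ingredient is the torsion-freeness of $\pi_*\MString\inv16$, and I do not see how to obtain it without invoking Hovey's computation; everything else is routine bookkeeping with the rational (co)homology of Thom spectra, entirely parallel to the classical identification of $\pi_*\MSO\tensor\QQ$ with a polynomial ring on Pontrjagin numbers recalled in Section~\ref{section:pontrjagin}.
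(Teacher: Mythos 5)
Your proof is correct and uses the same two-step strategy as the paper: rational injectivity plus torsion-freeness of $\pi_*\MString\inv16$. The difference is the source for torsion-freeness. The paper cites Giambalvo's 1971 theorem \cite[Theorem~4.3]{giambalvo-71} that $\pi_*\MString$ has no $p$-torsion for $p>3$, which is both decades older than Hovey's computation and logically independent of the machinery being developed in \S\ref{section:homotopy}; your route through Hovey gives the same conclusion but is a much heavier hammer, and it is worth knowing that a far cheaper result suffices. (Also note that as stated in \S\ref{section:homotopy}, Hovey's theorem is a ring isomorphism $\pi_*\MString_{(p)} \iso \BP_*\MString/(u_1,u_2,\dots)$ rather than a wedge splitting, and torsion-freeness of a quotient is not automatic from torsion-freeness of $\BP_*\MString$, so extracting it from Hovey takes a little more care than you indicate.) Your detailed justification of rational injectivity via the Thom isomorphism and the identification $\H^*(\BString;\QQ)\iso\QQ[\p_2,\p_3,\dots]$ is correct and fills in a step the paper treats as self-evident.
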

\begin{proof}
  It is injective tensor $\QQ$ so its kernel is torsion (since $\QQ$ is a flat $\ZZ$-module). Giambalvo, however, showed that $\pi_*\MString$ has no $p$--torsion for $p>3$ \cite[Thm.~4.3]{giambalvo-71}.
\end{proof}

Since $\pi_*\MSpin\inv12 \iso \pi_*\MSO\inv12$:

\begin{corollary*}
  Any two string structures for an oriented manifold determine the same element of $\pi_*\MString\inv16$.
\end{corollary*}

And since Pontrjagin numbers detect equality in $\pi_*\MSO\inv12$:

\begin{corollary*}
  Pontrjagin numbers detect equality in $\pi_*\MString\inv16$.
\end{corollary*}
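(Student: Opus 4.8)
The plan is to reduce the corollary at once to the fact recalled just above --- that Pontrjagin numbers detect equality in $\pi_*\MSpin\inv12 \iso \pi_*\MSO\inv12$ --- by feeding it into the injectivity of the forgetful homomorphism established in the Proposition. The essential observation is that a Pontrjagin number of a closed manifold depends only on its underlying stable real vector bundle, not on any orientation, spin, or string structure that bundle carries. Hence the total Pontrjagin-number homomorphism
\begin{align*}
  \pi_*\MString\inv16 \longrightarrow \prod_I \ZZ\inv16, \qquad [M] \longmapsto \big(\s_I[M]\big)_I,
\end{align*}
with $I$ ranging over all partitions, factors through the forgetful homomorphism $\pi_*\MString\inv16 \to \pi_*\MSpin\inv16$, and in fact through $\pi_*\MSO\inv16$.

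First I would record that Pontrjagin numbers detect equality in $\pi_*\MSpin\inv16$. Localizing the isomorphism $\pi_*\MSpin\inv12 \iso \pi_*\MSO\inv12$ further by inverting $3$ identifies $\pi_*\MSpin\inv16$ with $\pi_*\MSO\inv16$, a polynomial ring over $\ZZ\inv16$ and in particular torsion-free; it therefore embeds into $\pi_*\MSO\tensor\QQ$, a polynomial $\QQ$-algebra on classes $[\CP^{2k}]$. On $\pi_*\MSO\tensor\QQ$ the Pontrjagin-number homomorphism is injective by Thom's theorem (see \cite{milnor-stasheff74}), and this detection is manifestly compatible with the localization and inclusion just used. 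Composing with the injection $\pi_*\MString\inv16 \into \pi_*\MSpin\inv16$ of the Proposition then shows the Pontrjagin-number homomorphism out of $\pi_*\MString\inv16$ is injective --- equivalently, Pontrjagin numbers detect equality there.

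There is no real obstacle. The two points that want attention are the factorization of the Pontrjagin-number map through $\pi_*\MSpin$ (so that the Proposition may be applied) and the role of torsion-freeness, which is what upgrades ``detects equality rationally'' to ``detects equality after inverting $6$.'' The latter is exactly the $p > 3$ case of Giambalvo's theorem that already underlies the Proposition, so the corollary is a formal consequence of the preceding results.
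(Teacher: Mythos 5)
Your proposal is correct and takes essentially the same route as the paper: the paper deduces the corollary in one line from the Proposition, using exactly the factorization of the Pontrjagin-number map through the forgetful homomorphism $\pi_*\MString\inv16 \into \pi_*\MSpin\inv16$ and the standard fact that Pontrjagin numbers detect equality in $\pi_*\MSpin\inv12 \iso \pi_*\MSO\inv12$. You have merely spelled out the torsion-freeness step that upgrades rational detection to detection over $\ZZ\inv16$, which the paper leaves implicit.
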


To prove Theorem~\ref{thm:mstring-gen} it therefore suffices to determine the image of $\pi_*\MString\inv16 \to \pi_*\MSpin\inv16$ or, equivalently, to determine the image of $\pi_*\MString_{(p)} \to \pi_*\MSpin_{(p)}$ for each prime $p>3$. The Hovey-Ravenel-Wilson approach \cite{ravenel-wilson-1974,hovey-ravenel-1995} to $\BO\bb{4k}$ reduces $\pi_*\MString_{(p)} \to \pi_*\MSpin_{(p)}$ to the homomorphism $\BP_*\BBPb1_{2(p+1)} \to \BP_*\BBPb1_4$, and Hovey's description \cite{hovey-2008} of these rings reveals enough information about the image to prove Theorem~\ref{thm:mstring-gen}. What follows is a brief summary of the results of \cite{ravenel-wilson-1974,hovey-ravenel-1995,hovey-2008} needed to prove Theorem~\ref{thm:mstring-gen}.

\bigskip

First some standard notation. Let $\BP$ denote the Brown-Peterson spectrum \cite{brown-peterson-1966}; its coefficient ring is $\pi_*\BP\iso\ZZ_{(p)}[v_1,v_2,\dots]$ where $\deg(v_i)=2(p^i-1)$. Let $\BP\bb1$ denote the Johnson-Wilson spectrum obtained from $\BP$ by killing the ideal $(v_2,v_3,\dots)$ of $\pi_*\BP$; its coefficient ring is $\pi_*\BP\bb1\iso\ZZ_{(p)}[v_1]$ and its homotopy type is independent of the polynomial generators $v_2,v_3,\dots$ chosen \cite{johnson-wilson-1973}. The infinite loop space obtained by applying the $k$-th space functor to a spectrum \textit{X} will be denoted \emph{\textbf{X}}$_k$. 

\smallskip

Recall that the ring homomorphism $\pi_*\BP\bb1 \to \pi_*\ku_{(p)}$ taking $v_1$ to $v^{p-1}$ lets one identify $\pi_*\ku_{(p)} \iso \ZZ_{(p)}[v]$ with $\pi_*\BP\bb1[v]/(v_1-v^{p-1})$. This identification extends to a multiplicative splitting of spectra:
\begin{align*}
  \ku_{(p)} \iso \prod_{i=1}^{p-2} \susp^{2i} \BP\bb1
\end{align*}
Multiplication by $v$ on the left corresponds to the (upward) shift of factors on the right, the shift from top to bottom factor being accompanied by multiplication by $v_1$.

Since, for $k$ even, $\BU\bb{k}$ can be taken as the $k$-th space of $\ku$, this implies that there is a $p$-local decomposition of H-spaces:
\begin{align*}
  \BU\bb k_{(p)} \iso \prod_{i=1}^{p-2} \BBPb1_{k+2i}
\end{align*}

There is an analogous splitting of $\BO\bb k_{(p)}$ for $p>2$:

\begin{theorem*}[{\cite[Cor.~1.5]{hovey-ravenel-1995}}]
  If $k$ is divisible by $4$ and $p>2$ then there is a $p$-local decomposition of H-spaces:
  \begin{align*}
    \BO\bb k_{(p)} \iso \prod_{i=0}^{(p-3)/2} \BBPb1_{k+4i}
  \end{align*}
  Under this decomposition the map $\BO\bb{k+4}\to\BO\bb{k}$ corresponds to the identity map on the factors $\BBPb1_{k+4i}$ for $0<i<\tfrac12(p-3)$ and to $[v_1]:\BBPb1_{k+2p-2}\to\BBPb1_k$ on the remaining factor.
\end{theorem*}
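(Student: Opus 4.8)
The plan is to deduce the theorem from the $p$-local splitting of $\ku_{(p)}$ recalled above, in exactly the way the decomposition of $\BU\bb k_{(p)}$ was obtained: first split the connective real $K$-theory spectrum $\ko_{(p)}$, then pass to connective covers and infinite loop spaces. Since $2$ is invertible in $\ZZ_{(p)}$ the complex conjugation involution $\psi^{-1}$ splits off its fixed summand $\ko_{(p)} \htpc \ku_{(p)}^{h\ZZ/2}$; under $\ku_{(p)} \htpc \bigvee_{i=0}^{p-2}\susp^{2i}\BP\bb1$ the operation $\psi^{-1}$ acts on the $i$-th Adams summand by $(-1)^i$, so its fixed summand is the sub-wedge over the even indices:
\begin{align*}
  \ko_{(p)} \htpc \bigvee_{j=0}^{(p-3)/2} \susp^{4j}\BP\bb1 .
\end{align*}
(This is the classical $p$-local splitting of $\ko$; one could also just cite it.)

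Next I would use that for $k\ge1$ the space $\BO\bb k$ is, as an infinite loop space, $\Omega^\infty \tau_{\ge k}\ko$ — its homotopy groups agree with those of $\ko$ in degrees $\ge k$ and vanish below, since $\pi_i\BO = \pi_i\ko$ for $i\ge1$ by Bott periodicity. Truncation commutes with the suspensions in the wedge, so
\begin{align*}
  \BO\bb k_{(p)} \htpc \Omega^\infty \textstyle\bigvee_{j=0}^{(p-3)/2} \susp^{4j}\, \tau_{\ge k-4j}\BP\bb1 .
\end{align*}
Because $\BP\bb1$ has homotopy only in degrees divisible by $2(p-1)$, and its $v_1$-self-map identifies $\susp^{2(p-1)}\BP\bb1$ with $\tau_{\ge 2(p-1)}\BP\bb1$, one has $\tau_{\ge m}\BP\bb1 \htpc \susp^{2(p-1)\lceil m/(2(p-1))\rceil}\BP\bb1$ for $m>0$ and $\tau_{\ge m}\BP\bb1 = \BP\bb1$ for $m\le0$. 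A finite wedge of spectra is their product, and $\Omega^\infty$, being a right adjoint, preserves products; hence $\BO\bb k_{(p)} \htpc \prod_j \BBPb1_{d_j}$ with $d_j = 4j + 2(p-1)\max\{0,\lceil(k-4j)/(2(p-1))\rceil\}$. When $4\mid k$ and $0\le j\le(p-3)/2$, an elementary computation shows $j\mapsto d_j$ is a bijection onto $\{k,k+4,\dots,k+2(p-3)\}$: $d_j$ is the unique multiple of $4$ in that set with $d_j\equiv 4j \pmod{2(p-1)}$. This is the claimed decomposition $\BO\bb k_{(p)} \iso \prod_{i=0}^{(p-3)/2}\BBPb1_{k+4i}$. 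For the last assertion, the map $\BO\bb{k+4}\to\BO\bb k$ is $\Omega^\infty$ of the inclusion $\tau_{\ge k+4}\ko_{(p)}\into\tau_{\ge k}\ko_{(p)}$, which on the summand $\susp^{4j}\BP\bb1$ is $\susp^{4j}$ of $\tau_{\ge k+4-4j}\BP\bb1 \into \tau_{\ge k-4j}\BP\bb1$; this is the identity except on the single index $j\equiv k/4 \pmod{(p-1)/2}$, where the lower bound crosses a multiple of $2(p-1)$ and the map becomes $[v_1]$, and tracing that index through the bijection shows it is the top factor $\BBPb1_{k+2p-2}$ of the source mapping to the bottom factor $\BBPb1_k$ of the target.

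The substantive input — the Adams splitting $\ku_{(p)} \htpc \bigvee_{i=0}^{p-2}\susp^{2i}\BP\bb1$ — is already in hand, and the rest is bookkeeping. The two points that deserve care, and where I would expect any slip to occur, are: that $\Omega^\infty$ converts this \emph{finite} wedge of spectra into a \emph{product} of spaces (it would fail for an infinite wedge), and the arithmetic that the truncation degrees $d_j$ run over $k,k+4,\dots,k+2(p-3)$ without repetition, together with the identification of the one factor on which the stabilization map is $[v_1]$.
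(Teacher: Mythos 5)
The paper does not prove this theorem; it is stated as a citation to Hovey--Ravenel \cite[Corollary~1.5]{hovey-ravenel-1995} and invoked as a black box. So there is no ``paper's own proof'' to compare against.

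On its own merits your argument is correct, and it is the natural derivation from the Adams splitting that the cited paper is in essence carrying out. The three ingredients -- (i) the $p$-local Adams splitting $\ko_{(p)} \htpc \bigvee_{j=0}^{(p-3)/2}\susp^{4j}\BP\bb1$, obtainable as you say from the even-index Adams summands of $\ku_{(p)}$ via the idempotent $\tfrac12(1+\psi^{-1})$ (available since $2$ is invertible); (ii) the identification of $\BO\bb k$, for $k\ge1$, with $\Omega^\infty\tau_{\ge k}\ko$ and the compatibility of $\Omega^\infty$ with connective covers and finite products; and (iii) the equivalence $\tau_{\ge 2(p-1)}\BP\bb1 \htpc \susp^{2(p-1)}\BP\bb1$ via $v_1$, coming from the cofiber sequence $\susp^{2(p-1)}\BP\bb1 \to \BP\bb1 \to \mathrm{H}\ZZ_{(p)}$ -- are each correct and assembled correctly. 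The one place where the bookkeeping needs real care is the last claim, and your treatment is sound: since $4 \mid k$ and $2(p-1)\equiv 0\pmod 4$, the truncation parameters $k-4j$ for $j\in\{0,\dots,(p-3)/2\}$ hit exactly one nonnegative multiple of $2(p-1)$ (the range has length $2(p-3) < 2(p-1)$, and the unique multiple of $2(p-1)$ in $[k-2(p-3),k]$ is automatically $\equiv 0\pmod 4$, hence among the $k-4j$), so exactly one summand acquires a $v_1$; and computing $d_j$ at that index shows the source degree is $k+2(p-1)=k+2p-2$ and the target degree is $k$, i.e.\ top factor of $\BO\bb{k+4}_{(p)}$ to bottom factor of $\BO\bb k_{(p)}$ via $[v_1]$, as claimed. (As an aside your derivation also shows the identity maps occur for $0<i\le(p-3)/2$ rather than $0<i<(p-3)/2$; the strict inequality in the statement as printed appears to be a small slip, consistent with the displayed diagram for $k=4$.)
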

If $k=4$ then the situation looks like this:
\begin{align*}
  \xymatrix@C5pt@R25pt{ \BO\bb8_{(p)} \ar[d] &\iso &\BBPb1_8 \ar[drr]
    &\times &\BBPb1_{12} \ar[drr] &\times & \cdots & & \cdots
    \ar[drr] &\times &\BBPb1_{2p+2} \ar@(d,u)@{-->}[dllllllll] \\
    \BO\bb4_{(p)} &\iso &\BBPb1_4 &\times & \BBPb1_{8} &\times &
    \BBPb1_{12} & \times & \cdots & \times & \BBPb1_{2p-2} }
\end{align*}

\smallskip

Hovey shows that $\pi_*\MString_{(p)}$ is (abstractly) isomorphic as a ring to a quotient of the $\BP$-homology of this decomposition, the ring structure of the latter coming from the infinite loop space structures of the factors. To state his result precisely, we need to introduce some notation. If $p>2$ then there is a natural map of ring spectra $\MString \to \MSO \to \BP$. If $p>3$ then the induced homomorphism $\BP_*\MString \to \BP_*\BP$ is surjective \cite[Lemma~2.1]{hovey-2008}. For each positive integer $i$, choose a generator $u_i$ in $\BP_{2(p^i-1)}\MString$ mapping to the generator $t_i$ of $\BP_*\BP \iso \BP_*[t_1,t_2,\dots]$. For dimension reasons each $u_i$ must lie in the tensor factor $\BP_*\BBPb1_{2p-2}$ of $\BP_*\MString$.

\begin{theorem*}[{\cite[Thm.~2.4]{hovey-2008}}]
  If $p>3$ then there are (abstract) isomorphisms of rings:
  \begin{align*}
    \pi_*\MString_{(p)} \iso %
    \BP_*\BBPb1_8 &\tensor_{\BP_*} \BP_*\BBPb1_{12} \\
    \tensor_{\BP_*}\cdots &\tensor_{\BP_*} \BP_*\BBPb1_{2p-2}
    / (u_1,u_2,\dots) \tensor_{\BP_*} \BP_*\BBPb1_{2p+2} \\
    \pi_*\MSpin_{(p)} \iso \BP_*\BBPb1_4 &\tensor_{\BP_*} \cdots
    \tensor_{\BP_*} \BP_*\BBPb1_{2p-2}/(u_1,u_2,\dots)
  \end{align*}
\end{theorem*}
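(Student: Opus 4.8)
The plan is to deduce $\pi_*\MString_{(p)}$ from $\BP_*\MString$ by showing that the $\BP$-based Adams--Novikov spectral sequence for $\MString_{(p)}$ collapses, and to compute $\BP_*\MString$ itself from the $H$-space splitting recorded above. The ring map $\MString\to\MSO\to\BP$ is a $\BP$-orientation, so the Thom isomorphism identifies $\BP_*\MString$ with $\BP_*\BO\bb8$ as $\BP_*$-algebras; inserting the $p$-local decomposition $\BO\bb8_{(p)}\iso\prod_{i=0}^{(p-3)/2}\BBPb1_{8+4i}$ then yields
\[
  \BP_*\MString \iso \BP_*\BBPb1_8 \tensor_{\BP_*}\cdots\tensor_{\BP_*}\BP_*\BBPb1_{2p+2},
\]
which, by Wilson's computation of the $\BP$-homology of the spaces in the $\Omega$-spectrum for $\BP\bb1$, is a free --- in fact polynomial --- $\BP_*$-algebra; the same discussion applies verbatim with $\BO\bb8$ replaced by $\BSpin=\BO\bb4$. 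The orientation also makes $\BP_*\MString\to\BP_*\BP$ a map of $\BP_*\BP$-comodule algebras, surjective for $p>3$ by Hovey's Lemma~2.1, and, as noted in the excerpt, the chosen lifts $u_i$ of $t_i$ land in the tensor factor $\BP_*\BBPb1_{2p-2}$; a closer inspection of Wilson's generators for that factor shows the $u_i$ can be taken as part of a polynomial generating set for $\BP_*\MString$ over $\BP_*$.

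The heart of the argument is to prove that $\BP_*\MString$ is an \emph{extended} (cofree) $\BP_*\BP$-comodule. Let $A:=\BP_*\MString/(u_1,u_2,\dots)$, the sub-$\BP_*$-algebra generated by the complementary polynomial generators. The claim is that the coaction on $\BP_*\MString$ --- which is not the coaction that $\BP_*\BO\bb8$ carries as the homology of a space, but its twist by the Thom isomorphism --- restricts trivially to $A$, while, for a suitable choice of the $u_i$, it reproduces under $u_i\leftrightarrow t_i$ the coproduct of $\BP_*\BP=\BP_*[t_1,t_2,\dots]$. Granting this, the $\BP_*$-algebra homomorphism $\BP_*\BP\tensor_{\BP_*}A\to\BP_*\MString$ sending $t_i\tensor1\mapsto u_i$ and $1\tensor a\mapsto a$ is an isomorphism of comodule algebras (both sides are polynomial over $A$ on the same classes, using $\BP_*\BP=\BP_*[t_1,t_2,\dots]$), so $\BP_*\MString$ is coinduced from the trivial comodule $A$.

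Once cofreeness is in hand the conclusion is formal: $\Ext_{\BP_*\BP}(\BP_*,\BP_*\BP\tensor_{\BP_*}A)\iso A$ is concentrated in homological degree $0$, so the Adams--Novikov $E_2$-page for $\MString_{(p)}$ is $A$ in filtration $0$; as $A$ is evenly graded there are no differentials and no extension problems, whence $\pi_*\MString_{(p)}\iso A=\BP_*\MString/(u_1,u_2,\dots)$, and the first paragraph converts this into the stated tensor-product description. Running the same steps with $\BO\bb8$ replaced by $\BSpin$ gives the formula for $\pi_*\MSpin_{(p)}$; alternatively one may invoke $\MSpin_{(p)}\simeq\MSO_{(p)}$, a wedge of suspensions of $\BP$, for which cofreeness of $\BP_*\MSpin$ is automatic. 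I expect the third paragraph --- cofreeness --- to be the real obstacle: it requires computing the $\BP_*\BP$-coaction on Wilson's polynomial generators for $\BP_*\BBPb1_k$, tracking the formal-group-law correction introduced by the classifying map $\BO\bb8\to\BO$, and checking that the generators outside the $\BBPb1_{2p-2}$ factor (and the non-$u_i$ generators inside it) become coaction-primitive while the $u_i$ carry a full copy of $\BP_*\BP\subseteq\BP_*\MString$. The rest is bookkeeping within the Hopf-ring and Thom-isomorphism machinery of Ravenel--Wilson and Hovey--Ravenel.
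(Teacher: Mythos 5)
The paper does not actually prove this statement --- it is quoted verbatim from Hovey \cite[Theorem~2.4]{hovey-2008} and used as a black box --- so there is no in-paper argument to compare against. Judged on its own terms, your outline follows the right strategy (indeed, essentially Hovey's): Thom isomorphism plus the $p$-local splitting of $\BO\bb8$ to compute $\BP_*\MString$ as a tensor product of the $\BP_*\BBPb1_{8+4i}$, surjectivity of $\BP_*\MString\to\BP_*\BP$, cofreeness of the comodule, and a collapsing $\BP$-based Adams--Novikov spectral sequence. But two points need attention. First, your claim that $\BP_*\MString$ is a \emph{polynomial} $\BP_*$-algebra is false, and contradicts the surrounding text of the paper: the top factor $\BP_*\BBPb1_{2p+2}$ is explicitly \emph{not} polynomial --- it carries the relations $r_{ij}$ of Hovey's Theorem~1.7, and this non-polynomiality is the whole point of the paper. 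What is true, and what the spectral-sequence argument actually needs, is that $\BP_*\MString$ is free as a $\BP_*$-\emph{module}, and that it is polynomial \emph{over} $A$ on the classes $u_1,u_2,\dots$ (which is fine, since the $u_i$ sit in the genuinely polynomial factor $\BP_*\BBPb1_{2p-2}$). Your parenthetical ``the same discussion applies verbatim to $\BSpin$'' has the logic backwards: it is $\BP_*\MSpin$ (all factors $\BBPb1_{4+4i}$ with $4+4i\le 2p-2<2p$) that is polynomial, not $\BP_*\MString$.

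Second, the step you yourself flag as ``the real obstacle'' --- that $\BP_*\MString$ is an extended $\BP_*\BP$-comodule --- is the entire mathematical content of the theorem, and it is not carried out; as written the proposal is a plan rather than a proof. Moreover the specific route you suggest (choosing the complementary generators to be coaction-primitive) is stronger than necessary and not obviously achievable for Wilson's generators. The standard way to close this gap, and the one Hovey takes, is to prove a general splitting criterion: a connective $p$-local ring spectrum $R$ with $\BP_*R$ free over $\BP_*$ and $\BP_*R\to\BP_*\BP$ surjective splits as a wedge of suspensions of $\BP$, with $\pi_*R\iso\BP_*R/(u_1,u_2,\dots)$. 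That criterion is proved by a filtration/triangularity argument on the coaction of the $u_i$, not by exhibiting primitive complements. Until one of these two routes is actually executed, the proof is incomplete.
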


So to understand the forgetful homomorphism $\pi_*\MString_{(p)} \to \pi_*\MSpin_{(p)}$ it suffices to understand the ring homomorphism induced by the dotted arrow above:
\begin{align*}
  [v_1]_* : \BP_*\BBPb1_{2p+2} \to \BP_*\BBPb1_4
\end{align*}
As we shall see, it is the inclusion of a non-polynomial subring into a polynomial ring. A toy model worth bearing in mind is the inclusion $\ZZ[5x,y,xy] \into \ZZ[x,y]$.

\medskip

Instead of studying each ring $\BP_*\BBPb1_n$ individually, Hovey exploits the fact that they fit together to form a Hopf ring $\BP_*\BBPb1_*$.  In particular there is a circle product:
\begin{align*}
  \circ : \BP_*\BBPb1_m \tensor \BP_*\BBPb1_n \to \BP_*\BBPb1_{m+n}
\end{align*}
corresponding to the ring spectrum structure of $\BP\bb1$. It gives an inductive way to construct elements in the increasingly complicated rings $\BP_*\BBPb1_{m+n}$. In fact, all the elements we will need can be constructed that way from just two kinds of elements, $b_{(i)}$ and $[v_1^i]$, defined as follows. The complex orientation gives a map $\CP^\infty \to \BBPb1_2$. Let $b_i \in \BP_{2i}\BBPb1_2$ be the image under this map of the $\BP$-homology generator of degree $2i$. Let $b_{(i)}$ denote the generator $b_{p^i}$ (generators not of this form are decomposable). The homotopy class $v_1^i$ is represented by a map $\mathrm{S}^0 \to \BBPb1_{-2i(p-1)}$. Let $[v_1^i] \in \BP_0\BBPb1_{-2i(p-1)}$ denote the image under this map of the $\BP$-homology generator. 

Wilson \cite[Cor.~5.1]{wilson-1975} showed that, for $n<2p+2$, the $p$-local homology of $\BBPb1_n$ is an evenly graded torsion-free polynomial algebra with one generator in each dimension corresponding to $s^nv_1^k$ for $k\ge0$. The Atiyah-Hirzebruch spectral sequence therefore collapses and the $\BP$ homology of $\BBPb1_n$ has the same properties. In fact:

\begin{theorem*}[{\cite[Thm.~1.2]{hovey-2008}}]
  If $n<2p$ then $\BP_*\BBPb1_n$ is a polynomial algebra over $\BP_*$ with one generator in each positive even degree congruent to $n$ mod~$2p-2$. In a degree $2m$ of that form, one can take as generator:
  \begin{align*}
    x_{2m} = [v_1^i] \circ b_{(0)}^{\circ j_0} \circ b_{(1)}^{\circ
      j_1} \circ \cdots \circ b_{(k)}^{\circ j_k}
  \end{align*}
  where $m=\sum j_lp^l$ is the $p$-adic expansion and $i=\tfrac1{p-1}(\alpha(m)-\tfrac12n)$ with $\alpha(m)=\sum_l j_l$.
\end{theorem*}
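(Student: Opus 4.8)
The plan is to bootstrap the theorem from the classical computation of $\BP_*\BU$, using the $p$-local $\H$-space splittings of $\BU\bb k_{(p)}$ recalled above, and then to identify the explicit generators $x_{2m}$ by a Hopf-ring bookkeeping together with a mod-$p$ reduction. I will work with $n$ even and $0<n\le 2p-2$, the case used in Theorem~\ref{thm:mstring-gen} and throughout the paper; the cases $n$ odd and $n\le 0$ run identically, with $\BU$ replaced by its odd-indexed spaces (whose $\BP$-homology is exterior rather than polynomial), and yield $\BP_*\BBPb1_n$ free graded-commutative on the analogously indexed generators.

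\emph{Step 1 (polynomiality).} First I would invoke the classical isomorphism $\BP_*\BU_{(p)}\iso\BP_*[\beta_1,\beta_2,\dots]$, $|\beta_j|=2j$, the Atiyah--Hirzebruch spectral sequence collapsing because $\H_*(\BU;\ZZ)$ is torsion-free and concentrated in even degrees; here $\beta_j$ is the image of the $\BP_*$-module generator $b_j\in\BP_{2j}\CP^\infty$ under $\CP^\infty\to\BU$. The cited splittings realise each $\BBPb1_n$ (for $n$ even, $0<n\le 2p-2$) as an $\H$-space retract of $\BU_{(p)}$, so $\BP_*\BBPb1_n$ is a $\BP_*$-Hopf-algebra retract of $\BP_*[\beta_j]$, in particular a free $\BP_*$-module. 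Reducing coefficients to $\FF_p$ and applying Borel's structure theorem for connected graded Hopf algebras, $\H_*(\BBPb1_n;\FF_p)$ must be polynomial over $\FF_p$: an exterior or truncated-polynomial tensor factor would contribute a nilpotent element, impossible in the polynomial ring $\H_*(\BU;\FF_p)$ into which $\H_*(\BBPb1_n;\FF_p)$ embeds as a retract. Lifting the $\FF_p$-polynomial generators, applying the graded Nakayama lemma to see they generate $\BP_*\BBPb1_n$, and comparing Poincar\'e series (using $\BP_*$-freeness) then upgrades this to: $\BP_*\BBPb1_n$ is a polynomial $\BP_*$-algebra. Its generator degrees are forced by the elementary rational computation --- $\BBPb1_n$ is $(n-1)$-connected and rationally a product of even Eilenberg--MacLane spaces, so $\H_*(\BBPb1_n;\QQ)$ is polynomial over $\QQ$ on one generator in each degree $\ge n$ with degree $\equiv n\pmod{2p-2}$ --- which, $\BP_*\BBPb1_n$ being now known polynomial, pins down its generators to exactly those degrees.

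\emph{Step 2 (locating the generators).} It remains to verify that $x_{2m}=[v_1^i]\circ b_{(0)}^{\circ j_0}\circ\cdots\circ b_{(k)}^{\circ j_k}$ (with $m=\sum_l j_lp^l$ and $i=\tfrac1{p-1}(\alpha(m)-\tfrac12 n)$) is a valid polynomial generator in degree $2m$. The bookkeeping is routine: the circle product is additive both in $\BP_*$-degree and in spectrum index, $b_{(l)}=b_{p^l}$ has degree $2p^l$ and lies in $\BBPb1_2$, and $[v_1^i]$ has degree $0$ and lies in $\BP_0\BBPb1_{-2i(p-1)}$; hence $x_{2m}$ has degree $2\sum_l j_lp^l=2m$ and lies in $\BBPb1_{2\alpha(m)-2i(p-1)}$, which equals $\BBPb1_n$ for exactly the stated $i$. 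That $i$ is a non-negative integer follows from $\alpha(m)\equiv m\equiv\tfrac12 n\pmod{p-1}$ (making $i$ integral) together with $\alpha(m)\ge 1$ and $0<\tfrac12 n\le p-1$ (which force $\alpha(m)\ge\tfrac12 n$). So the $x_{2m}$ sit in exactly the degrees where Step~1 predicts one generator.

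\emph{Step 3 (the main obstacle).} The crux --- the step I expect to be hardest --- is showing $x_{2m}$ indecomposable and nonzero modulo $p$, that is, a genuine polynomial generator rather than a decomposable element or a $p$-multiple of one. I would argue this after reducing mod $p$: by Step~1 the map $\BP_*\BBPb1_n\to\H_*(\BBPb1_n;\FF_p)$ is exactly reduction of coefficients, so it detects indecomposables, and it carries $x_{2m}$ to the element of the same name (the classes $b_{(l)}$ and $[v_1^i]$ being natural). It therefore suffices to show that the $x_{2m}$ generate the mod-$p$ Hopf ring $\H_*(\BBPb1_*;\FF_p)$ in the range $n<2p$, which I would do by rerunning Step~1 with $\FF_p$-coefficients while tracking circle products. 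Two facts control that tracking: Hopf-ring distributivity $a\circ(x*y)=(a\circ x)*(a\circ y)$ together with the fact that $b_j$ is $*$-decomposable in $\H_*(\CP^\infty;\FF_p)$ unless $j$ is a power of $p$ (that algebra being a divided-power algebra), so the image of $\CP^\infty$ in the Hopf ring is $\circ$-generated by the $b_{(l)}$; and the single obstructive relation, coming from the $p$-series $[p]_{\BP\bb1}(x)=px+_F v_1x^p$. This last relation is precisely where the hypothesis $n<2p$ enters: its first nontrivial incarnation in the Hopf ring sits in spectrum index $2p$, so for $n<2p$ nothing constrains the circle monomials beyond the Hopf-ring axioms themselves, which is exactly the polynomial range. (At index $2p$ and beyond the $[v_1]$-relation is what produces the non-polynomial behaviour exploited in this paper's main theorems --- compare the toy model $\ZZ[5x,y,xy]\into\ZZ[x,y]$ above.)
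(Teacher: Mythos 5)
This theorem is \emph{cited}, not proved, in the paper: it is reproduced verbatim from Hovey's paper and used as a black box in the survey of \S\ref{section:homotopy}, so there is no in-paper argument to compare yours against.

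Assessed on its own terms, your reconstruction has the right shape. Step~1 (retract off $\BU_{(p)}$, Borel's structure theorem mod~$p$, lift by graded Nakayama and a rank count using $\BP_*$-freeness) is the standard template and is correct in outline, with one small imprecision: $\BBPb1_2$ is a factor of $\BU\bb0_{(p)}=\ZZ\times\BU_{(p)}$ rather than of $\BU_{(p)}$; for $4\le n\le 2p-2$ the spaces really do split off $\BU\bb2_{(p)}=\BU_{(p)}$. Step~2 --- the spectrum-index bookkeeping and the verification that $i=\tfrac1{p-1}(\alpha(m)-\tfrac12 n)$ is a nonnegative integer --- is correct and complete.

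Step~3 is where the content of Hovey's theorem actually lives, and you have written a table of contents rather than an argument. You correctly name the three relevant tools (Hopf-ring distributivity; $b_j$ being $*$-decomposable in the divided-power algebra $\H_*(\CP^\infty;\FF_p)$ unless $j$ is a $p$-power; the $[p]_F$-series relation), but ``rerunning Step~1 with $\FF_p$-coefficients while tracking circle products'' is a promissory note. Two specific things remain open. (i) You need $x_{2m}$ to be nonzero and indecomposable mod~$p$ when $i>0$: a circle monomial involving $[v_1^i]$ could a priori be decomposable or $p$-divisible, and the relations quoted later in the paper (Hovey's Corollaries~1.5 and~1.6, which already live at indices $2$ and $2p+2$) show that precisely this kind of collapse does occur once you pass to $\BBPb1_{2p+2}$; you have to say why the range $n<2p$ protects you, which the statement ``nothing constrains the circle monomials beyond the Hopf-ring axioms'' asserts without establishing. (ii) You need the $x_{2m}$ to exhaust the indecomposable quotient of $\H_*(\BBPb1_n;\FF_p)$, which requires comparison against an actual basis coming from the Ravenel--Wilson/Hovey--Ravenel Hopf-ring calculation (typically via the bar spectral sequence and the suspension maps $\H_*\BBPb1_n\to\H_{*+1}\BBPb1_{n+1}$), not just a count of degrees. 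In short: Steps~1--2 are sound, Step~3 --- which you rightly flag as the crux --- is the theorem and would still have to be proved.
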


\smallskip

If $n=2p+2$ then $\BP_*\BBPb1_n$ is \emph{not} a polynomial ring over $\BP_*$. It has a generator in each degree congruent to $4$ mod~$2p-2$ (and greater than $4$) but it has two generators in some of these dimensions, and these generators satisfy a relation. Specifically:
\begin{itemize}
\item In each degree $4p^i$ for $i>0$ there is one generator:
  \begin{align*}
    w_{4p^i} &= b_{(i)} \circ b_{(i-1)}^{\circ p}
    \intertext{\item In each degree $2(p^i+p^j)$ for $0\le i<j$ there is a generator:}
    y_{2(p^i+p^j)} &= b_{(i)} \circ b_{(j-1)}^{\circ p} \\
    \intertext{\item In each degree $2(p^i+p^j)$ for $0<i<j$ there is a second generator:}
    z_{2(p^i+p^j)} &= b_{(i-1)}^{\circ p} \circ b_{(j)}
  \end{align*}
  To simplify formulas later on, let $z_{2(1+p^j)}=0$ for $j>0$.
\item In each of the other degrees---that is, in each degree $2m$ congruent to $4$ mod~$2p-2$ but not of the form $2(p^i+p^j)$ for any $0\le i \le j$---there is a single generator of the form $x_{2m}$, defined as in the preceding theorem.
\end{itemize}

Hovey constructs, for each $0<i<j$, a relation $r_{ij}$ involving $y_{2(p^i+p^j)}$, $z_{2(p^i+p^j)}$ and $p$. To express it, let $\mrm{I}$ be the ideal of $\BP_*$ generated by $(p,v_1,v_2,\dots)$ and let $\mrm{I}(n)$ be the kernel of $\BP_*\BBPb1_n \to \BP_*$.

\begin{proposition*}[{\cite[Cor.~1.6]{hovey-2008}}]
  For any pair of integers $0<i<j$ there is a relation in $\BP_*\BBPb1_{2p+2}$ of the form:
  \begin{multline*}
    p(z_{2(p^i+p^j)}-y_{2(p^i+p^j)}) \equiv 
    v_j y_{2(1+p^i)} - v_i \cdot y_{2(1+p^j)}
    + y_{2(p^{i-1}+p^{j-1})}^p - z_{2(p^{i-1}+p^{j-1})}^p 
    \\ \mod \mrm{I}^2 \cdot \mrm{I}(2p+2) + \mrm{I} \cdot
    \mrm{I}(2p+2)^{*2} + \mrm{I}(2p+2)^{*p+1}
  \end{multline*}
\end{proposition*}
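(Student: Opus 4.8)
The plan is to derive the relation $r_{ij}$ from the \emph{main relation} of the Hopf ring $\BP_*\BBPb1_*$ --- the Hopf-ring-theoretic shadow of the formal group law of $\BP$ together with its $p$-series --- by specialising it to degree $2(p^i+p^j)$ in the space $\BBPb1_{2p+2}$ while carefully tracking the filtration by the augmentation ideals $\mrm I\subset\BP_*$ and $\mrm I(2p+2)\subset\BP_*\BBPb1_{2p+2}$. Write $b(s)=\sum_{k\ge0}b_k s^k$ for the generating series of the classes in $\BP_*\BBPb1_2$ pulled in along $\CP^\infty\to\BBPb1_2$, and let $F$ be the formal group law of $\BP$, so that $[p]_F(x)\equiv\sum_{k\ge0}^{F}v_k x^{p^k}$ modulo decomposables, with $v_0=p$. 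The ring-spectrum structure of $\BP\bb1$, together with the fact that the $\BP\bb1$-orientation $\bar x$ satisfies $[p](\bar x)=p\bar x+_F v_1\bar x^{p}$ (no higher $v$'s, since they die in $\pi_*\BP\bb1$), gives --- after pushing the $p$-th power self-map of $\CP^\infty=BU(1)$ into $\BBPb1_2$ --- an identity of the schematic form
\begin{align*}
  b\bigl([p]_F(s)\bigr)\;=\;b(ps)\,*\,\bigl([v_1]\circ b^{\circ p}(s^{p})\bigr)\,*\,\rho(s),
\end{align*}
where $*$ is the H-space product, $\circ$ the circle product, $[v_1]\in\BP_0\BBPb1_{-2(p-1)}$, and every monomial of $\rho(s)$ is a $*$-product of at least two augmentation-ideal classes (the $F$-corrections in $p\bar x+_F v_1\bar x^{p}$, with scalar coefficients in $\ZZ_{(p)}[v_1]$). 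The essential observation is that the left side is computed with the \emph{full} formal group law of $\BP$, so expanding $[p]_F(s)$ honestly introduces every generator $v_2,v_3,\dots$ there; these are precisely the terms which, after rearrangement, carry the $v_i,v_j$ of $r_{ij}$.

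Next I would move this into $\BP_{2(p^i+p^j)}\BBPb1_{2p+2}$. Extracting the coefficient of $s^{p^i}$ in the main relation, the lowest-filtration (least $p$-divisible, shortest $*$-length) contribution to the left side is $v_i\,b_{(0)}$ (from the monomial $v_i s^{p^i}$ of $[p]_F(s)$ with $m=1$), the linear coefficient $v_0=p$ contributes $p\,b_{(i)}$, and the $v_1\bar x^{p}$-term contributes a $b_{(i-1)}^{\circ p}$; this yields an identity in $\BP_*\BBPb1_2$. Circle-multiplying by $b_{(j-1)}^{\circ p}\in\BP_*\BBPb1_{2p}$ (and symmetrising in $i\leftrightarrow j$, equivalently combining with the iterate $[p^2]_F=[p]_F\circ[p]_F$ so that $v_i$ and $v_j$ occur together) lands in $\BP_{2(p^i+p^j)}\BBPb1_{2p+2}$. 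Matching $\circ$-monomials against Hovey's list of generators identifies $b_{(i)}\circ b_{(j-1)}^{\circ p}=y_{2(p^i+p^j)}$ and $b_{(i-1)}^{\circ p}\circ b_{(j)}=z_{2(p^i+p^j)}$ on the left of $r_{ij}$ (with coefficient $p$); the $v_k x^{p^k}$-terms give $v_j\,b_{(0)}\circ b_{(i-1)}^{\circ p}=v_j y_{2(1+p^i)}$ and $v_i\,b_{(0)}\circ b_{(j-1)}^{\circ p}=v_i y_{2(1+p^j)}$; and the $x^{p}$-coefficient of the same expansion --- the Frobenius shadow $b_{(k)}\leftrightarrow b_{(k-1)}^{\circ p}$ --- produces the ring ($*$-)powers $y_{2(p^{i-1}+p^{j-1})}^{p}$ and $z_{2(p^{i-1}+p^{j-1})}^{p}$, with the convention $z_{2(1+p^j)}=0$ covering the case $i=1$.

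It then remains to check that everything discarded lies in $\mrm I^2\cdot\mrm I(2p+2)+\mrm I\cdot\mrm I(2p+2)^{*2}+\mrm I(2p+2)^{*p+1}$: the terms with a scalar coefficient divisible by $p^2$ or by a product $v_kv_l$ ($k,l\ge1$) sit in $\mrm I^2\cdot\mrm I(2p+2)$; the terms acquiring an extra $*$-decomposable factor from $\rho$ sit in $\mrm I\cdot\mrm I(2p+2)^{*2}$; and the $\circ$-monomials of $*$-length $\ge p+1$ coming from the deepest $F$-corrections in $[p]_F$ or $[p^2]_F$ sit in $\mrm I(2p+2)^{*p+1}$. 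This bookkeeping is the main obstacle. Because $F$, the generators $v_k$, and Hovey's polynomial generators $x_{2m},w,y,z$ are only pinned down modulo decomposables, one must verify that none of this indeterminacy leaks out of the error ideal --- a filtration chase in the Hopf ring that is essentially the technical core of Hovey's computation. (One could instead run the bar spectral sequence $\Tor^{\BP_*\BBPb1_{2p}}(\BP_*,\BP_*)\Rightarrow\BP_*\BBPb1_{2p+2}$ and read $r_{ij}$ off a multiplicative extension, but identifying that extension requires the same formal-group input, so it merely relocates the difficulty.)
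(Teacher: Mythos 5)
The paper does not actually prove this proposition---it is quoted verbatim as Hovey's Corollary~1.6 and used as a black box. So there is no ``paper's own proof'' to compare against; what you have written is a reconstruction of Hovey's argument. That said, your reconstruction uses precisely the technique that the paper itself deploys a few lines later (to compute $[v_1]_*$ into $\BP_*\BBPb1_4$): take Hovey's Corollary~1.5, the relation $[v_1]\circ b_{(k-1)}^{\circ p}\equiv v_k\,b_{(0)}-p\,b_{(k)}-b_{(k-1)}^{*p}$ in $\BP_*\BBPb1_2$, then $\circ$-multiply by a Hopf ring element, substitute various index pairs, and subtract. Here you would $\circ$-multiply the $k=i$ instance by $b_{(j-1)}^{\circ p}\in\BP_*\BBPb1_{2p}$ and the $k=j$ instance by $b_{(i-1)}^{\circ p}$; since $[v_1]\circ b_{(i-1)}^{\circ p}\circ b_{(j-1)}^{\circ p}$ is symmetric in $i\leftrightarrow j$, subtracting the two resulting congruences in $\BP_*\BBPb1_{2p+2}$ kills the $[v_1]$-terms and leaves exactly the displayed relation, once you identify $b_{(i-1)}^{*p}\circ b_{(j-1)}^{\circ p}=(b_{(i-1)}\circ b_{(j-2)}^{\circ p})^{*p}=y_{2(p^{i-1}+p^{j-1})}^{*p}$ by iterating the Hopf ring distributive law $a^{*p}\circ b_{(k)}=(a\circ b_{(k-1)})^{*p}$ (quoted in the paper) $p$ times, and similarly for $z$. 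The error ideal is preserved because $\BP_*\BBPb1_{2p}\circ\mrm{I}(2)^{*k}\subseteq\mrm{I}(2p+2)^{*k}$, also noted in the paper.

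Three small criticisms. First, you do not need the ``main relation'' $b([p]_F(s))=b(ps)*([v_1]\circ b^{\circ p}(s^p))*\rho(s)$ at all once you have Corollary~1.5 in hand, and as written that identity is a schematic caricature: it mixes the $\BP$ formal group law with the $\BP\langle1\rangle$ $p$-series in a way that is not literally an equation, and extracting the $s^{p^i}$-coefficient honestly requires a filtration argument of its own, so invoking it adds risk without shortening anything. Second, the parenthetical ``equivalently combining with the iterate $[p^2]_F=[p]_F\circ[p]_F$'' is a red herring; the two $v$'s come from applying the degree-$2$ relation twice with different indices and comparing, not from squaring the $p$-series, and the two are not equivalent. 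Third, the Frobenius/Verschiebung step $b_{(i-1)}^{*p}\circ b_{(j-1)}^{\circ p}\leadsto y_{2(p^{i-1}+p^{j-1})}^{*p}$ is stated as an observation but really needs the $p$-fold iteration spelled out, together with the observation that the formula $a^{*p}\circ b_{(k)}=(a\circ b_{(k-1)})^{*p}$ applies with $a$ any augmentation-ideal class, not just a single generator. Fix these and you have a clean derivation from Corollary~1.5 alone, which is almost certainly what Hovey does.
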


Considering each of these relations as an element $r_{ij}$ of the $\BP_*$-polynomial ring $\mrm{R}$ on all the generators $w_{4p^i},y_{2(1+p^i)},y_{2(p^i+p^j)},z_{2(p^i+p^j)},x_{2m}$ for
$0<i<j$ and $2m$ of the form described above, Hovey shows that:
\begin{theorem*}[{\cite[Thm.~1.7]{hovey-2008}}]
  \begin{align*}
  \mrm{R}/(r_{ij} \;|\; 0<i<j) \to \BP_*\BBPb1_{2p+2}
\end{align*}
is an isomorphism of $\BP_*$--algebras.
\end{theorem*}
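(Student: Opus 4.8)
The plan is to build the comparison map, factor it through the relations, and then squeeze it to an isomorphism by a Poincar\'e-series count, exploiting the freeness of the target. Concretely: send each polynomial generator of $\mrm{R}$ to the iterated circle product bearing the same name in $\BP_*\BBPb1_{2p+2}$ (the target carries its ordinary ring structure, with $\circ$ merely supplying classes of the correct degree). By the Proposition above each $r_{ij}$ maps to $0$, so there is an induced $\BP_*$-algebra homomorphism $\bar\phi : \mrm{R}/(r_{ij} \mid 0<i<j) \to \BP_*\BBPb1_{2p+2}$. It remains to prove $\bar\phi$ is bijective, and I would do this in two stages: surjectivity by a Hopf-ring reduction, and injectivity by showing source and target have the same Poincar\'e series over $\BP_*$.

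\emph{Surjectivity.} Exactly as in the polynomial case $n<2p$, the Hopf-ring structure of $\BP_*\BBPb1_*$ shows that $\BP_*\BBPb1_{2p+2}$ is generated as a $\BP_*$-algebra by iterated $\circ$-products of the basic classes $b_{(i)}$ and $[v_1^i]$ (together with the low $b_{(0)},b_{(1)},\dots$). Using the Ravenel--Wilson Hopf-ring relations for $\BP\langle1\rangle$---distributivity of $\circ$ over the coproduct, the $[v_1]$-multiplication formulas, and the $p$-th power identities forced by $\pi_*\BP\langle1\rangle = \ZZ_{(p)}[v_1]$---I would reduce any circle monomial of degree congruent to $2p+2$ modulo $2p-2$ to a product of the listed classes $w_{4p^i}$, $y_{2(1+p^i)}$, $y_{2(p^i+p^j)}$, $z_{2(p^i+p^j)}$, $x_{2m}$. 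The content here is that a $\circ$-product of three or more of the $b_{(\ast)}$ can be rewritten, in the ordinary ring structure, in terms of the two-fold $\circ$-products---which are precisely the $w$'s, $y$'s and $z$'s---so that no generator beyond the listed ones appears in degree $2p+2$. This gives $\bar\phi$ onto.

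\emph{Injectivity by counting.} The target $\BP_*\BBPb1_{2p+2}$ is a free $\BP_*$-module on even generators (Ravenel--Wilson), and its Poincar\'e series can be read off from the Hopf-ring description of $H_*(\BBPb1_{2p+2};\FF_p)$. For the source, I would check that $\{r_{ij} \mid 0<i<j\}$ is a regular sequence in $\mrm{R}$: reducing modulo $\mrm{I}$, the relation $r_{ij}$ sits in degree $2(p^i+p^j)$ and its leading form is $y_{2(p^{i-1}+p^{j-1})}^p - z_{2(p^{i-1}+p^{j-1})}^p$ (with $z_{2(1+p^{j-1})}=0$), and these leading forms lie in pairwise distinct degrees, hence form a regular sequence; a flatness/associated-graded argument lifts regularity back to $\mrm{R}$. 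Thus $\mrm{R}/(r_{ij})$ is $\BP_*$-free with Poincar\'e series $P_{\mrm R}(t)\prod_{0<i<j}\!\big(1-t^{2(p^i+p^j)}\big)$; since there is exactly one relation in each degree $2(p^i+p^j)$, this simply removes one of the two generators present in that degree, which matches the Ravenel--Wilson count for $\BP_*\BBPb1_{2p+2}$. Finally, $\bar\phi$ is a degreewise surjection of graded $\BP_*$-modules that are finitely generated in each degree and have equal graded ranks; equivalently $\bar\phi \otimes_{\BP_*}\FF_p$ is a surjection of finite-dimensional graded $\FF_p$-vector spaces of equal dimension in each degree, hence an isomorphism, and since the target is $\BP_*$-free there is no $\Tor_1^{\BP_*}(\FF_p,-)$ obstruction, so graded Nakayama upgrades this to $\bar\phi$ itself being an isomorphism.

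The main obstacle is the Hopf-ring bookkeeping in the surjectivity step---verifying that $b_{(i)}\circ b_{(j-1)}^{\circ p}$ and $b_{(i-1)}^{\circ p}\circ b_{(j)}$, together with the $w$'s and the $x_{2m}$'s, genuinely exhaust a generating set in degree $2p+2$ with no hidden extra generator---and, dovetailing with it, extracting the exact Poincar\'e series of $\BP_*\BBPb1_{2p+2}$ from Ravenel--Wilson. The regular-sequence check is the other delicate point, since the error terms in $r_{ij}$ lie in $\mrm{I}^2\cdot\mrm{I}(2p+2)+\mrm{I}\cdot\mrm{I}(2p+2)^{*2}+\mrm{I}(2p+2)^{*p+1}$ and one must confirm they do not disturb the leading forms used above; passing to the associated graded for the $\mrm{I}$-adic and $\circ$-degree filtration is the natural way to keep these under control.
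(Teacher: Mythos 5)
This theorem is not proved in the paper at all: it is quoted verbatim as Theorem~1.7 of Hovey \cite{hovey-2008} and then used as a black box to describe $\BP_*\BBPb1_{2p+2}$. So there is no ``paper's own proof'' to compare your sketch against; the comparison has to be against Hovey's original argument.

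That said, your outline has the right overall shape---construct the natural $\BP_*$-algebra map by sending named generators to the eponymous circle products, check it kills each $r_{ij}$ via Hovey's Corollary~1.6, and then pin it down by a generation/counting argument using the Ravenel--Wilson description of $\BP_*\BBPb1_n$ as a free $\BP_*$-module---and this does match the strategy in \cite{hovey-2008}. But there are two soft spots you should be aware of. First, in the surjectivity step you describe $w_{4p^i}$, $y_{2(p^i+p^j)}$, $z_{2(p^i+p^j)}$ as ``the two-fold $\circ$-products'' and claim that $\circ$-monomials of length $\ge 3$ reduce to them; in fact each of these generators is a $(p{+}1)$-fold $\circ$-product of the $b_{(\ast)}$'s, and the $x_{2m}$'s additionally involve the classes $[v_1^i]$ with a length governed by the base-$p$ digit sum $\alpha(m)$. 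The reduction you want is not ``long $\circ$-products become short ones'' but rather that every allowable $\circ$-monomial in degree $\equiv 4 \pmod{2p-2}$ can be rewritten (using the $[v_1]$-relation and the Hopf-ring distributive law) as a $*$-polynomial in exactly the listed classes; this is the content of Hovey's main-relation bookkeeping (his Theorem~1.5 and the surrounding lemmas), and your sketch elides it. Second, the regular-sequence check is subtler than stated: modulo $\mrm{I}$ the leading form of $r_{ij}$ is $y_{2(p^{i-1}+p^{j-1})}^p - z_{2(p^{i-1}+p^{j-1})}^p = (y-z)^p$ over $\FF_p$ (with $z_{2(1+p^{j-1})}=0$ when $i=1$), and while these classes live in pairwise distinct internal degrees, one still has to argue that these $p$-th powers form a regular sequence in the ambient polynomial $\FF_p$-algebra and that regularity lifts through the $\mrm I$-adic filtration despite the error terms in $\mrm{I}^2\cdot\mrm{I}(2p+2)+\mrm{I}\cdot\mrm{I}(2p+2)^{*2}+\mrm{I}(2p+2)^{*p+1}$. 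These are precisely the points where Hovey's proof does real work, so your proposal is a reasonable reconstruction of the skeleton but not a complete argument on its own.
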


\medskip

Remember that we want to understand the homomorphism:
\begin{align*}
  [v_1]_* : \BP_*\BBPb1_{2p+2} \to \BP_*\BBPb1_4
\end{align*}
If $0<i<j$ then by definition:
\begin{align*}
  \begin{cases}
    [v_1]_* \; w_{4p^i} &= [v_1] \circ
    b_{(i)} \circ b_{(i-1)}^{\circ p} \\
    [v_1]_* \; y_{2(1+p^i)} &= [v_1]
    \circ b_{(0)} \circ b_{(i-1)}^{\circ p} \\
    [v_1]_* \; y_{2(p^i+p^j)} &= [v_1]
    \circ b_{(i)} \circ b_{(j-1)}^{\circ p} \\
    [v_1]_* \; z_{2(p^i+p^j)} &= [v_1] \circ b_{(i-1)}^{\circ p} \circ
    b_{(j)} \\
    [v_1]_* \; x_{2m} &= \underbrace{[v_1] \circ
      [v_1^i]}_{\hphantom{[v_1^{i+1}]}=[v_1^{i+1}]} \circ b_{(0)}^{\circ j_0}
    \circ b_{(1)}^{\circ j_1} \circ \cdots \circ b_{(k)}^{\circ j_k}
  \end{cases}
\end{align*}
Recall that the exponent $i$ of $v_1$ appearing in the generator $x_{2m}$ depends on both $m$ and $n$, specifically $i=i(m,n)=\tfrac1{p-1}(\alpha(m)-\tfrac12n)$. So $i(m,4)=i(m,2p+2)+1$ and the homomorphism carries each generator of $\BP_*\BBPb1_{2p+2}$ of the form $x_{2m}$ to the corresponding generator $x_{2m}$ of
$\BP_*\BBPb1_4$. To relate the images of the other generators to the generators $x_{2m}$ of $\BP_*\BBPb1_4$, we rely on the following proposition.

\begin{proposition*}[{\cite[Cor.~1.5]{hovey-2008}}]
  For each integer $i>0$ there is a relation in $\BP_*\BBPb1_2$ of the form:
  \begin{align*}
    [v_1] \circ b_{(i-1)}^{\circ p} \equiv v_i \cdot b_{(0)} - p
    \cdot b_{(i)} - b_{(i-1)}^{*p} \mod \mrm{I}^2 \cdot \mrm{I}(2) +
    \mrm{I} \cdot \mrm{I}(2)^{*2} + \mrm{I}(2)^{*p+1}
  \end{align*}
\end{proposition*}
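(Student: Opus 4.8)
The plan is to derive the relation from the Ravenel-Wilson style analysis of the Hopf ring $\BP_*\BBPb1_*$ --- equivalently, from specializing the Ravenel-Wilson relation for $\BP_*\underline{\BP}_*$ along the ring spectrum map $\BP\to\BP\bb1$ --- and then extracting the component in $\BP_{2p^i}\BBPb1_2$, reduced modulo the stated ideal. The structural point is that the whole Ravenel-Wilson computation is governed by a single relation that is the Hopf-ring incarnation of the $p$-typical $p$-series $[p]_F(t)\equiv pt+\sum_{j\ge1}v_jt^{p^j}\pmod{\mrm{I}^2}$ of the formal group law $F$: formal addition of the variable is realized by the $*$-product on $\BBPb1_2$, multiplication by a coefficient $v_j$ is realized by the $\circ$-product with $[v_j]$, and the substitution $t\mapsto t^{p^j}$ is realized by the $p^j$-fold $\circ$-power. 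Over $\BP\bb1$ the $p$-series truncates to $pt+_Fv_1t^p$, but because $\BP_*\BBPb1_*$ still carries $\BP$-coefficients the classes $v_j$ with $j\ge2$ do not vanish; they re-enter as $\BP_*$-coefficients, which is the origin of the term $v_ib_{(0)}$ on the right.

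The first step is to pin down the internal degree. The $p^i$-fold $\circ$-power of the single class $b_{(i-1)}=b_{p^{i-1}}$ lands in $\BBPb1_{2p}$, and a further $\circ$ with $[v_1]\in\BP_0\BBPb1_{-2(p-1)}$ returns it to $\BBPb1_2$ in total degree $2p^i$ --- the common degree of $v_ib_{(0)}$, $p\,b_{(i)}$, and $b_{(i-1)}^{*p}$. In that degree the term $p\,b_{(i)}$ matches the $pt$ part of $[p]_F$, the term $v_ib_{(0)}$ matches the $v_it^{p^i}$ part (no lower $v_j$ can contribute, for degree reasons), and $b_{(i-1)}^{*p}$ is an honest $p$-th $*$-power, computed inside the sub-Hopf-algebra $\BP_*\CP^\infty\subset\BP_*\BBPb1_2$ from the divided-power structure attached to $F$. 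Its leading coefficient is $p$ times a $p$-adic unit congruent to $-1\pmod p$: this is a Wilson-type congruence, visible from the standard formula $v_p(n!)=(n-s_p(n))/(p-1)$, which gives $v_p\!\big((mp)!/(m!)^p\big)=s_p(m)=1$ for $m=p^{i-1}$, together with the elementary congruence $(p-1)!\equiv-1\pmod p$; this accounts for the coefficient and sign of the $-b_{(i-1)}^{*p}$ term. For $i=1$ the identity reduces to $[v_1]\circ b_{(0)}^{\circ p}+b_{(0)}^{*p}+p\,b_{(1)}\equiv v_1b_{(0)}$, which can be checked directly from the formal group law and serves as a base case.

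The remaining step, which I expect to be the main obstacle, is the bookkeeping of the error: one must show that everything the Ravenel-Wilson relation contributes beyond the three named terms --- the higher coefficients of $[p]_F$, the formal-group corrections to the $*$-powers, and the mixed terms produced by the coproduct --- lies in $\mrm{I}^2\cdot\mrm{I}(2)+\mrm{I}\cdot\mrm{I}(2)^{*2}+\mrm{I}(2)^{*p+1}$, and that nothing of strictly lower filtration survives. The ideal is calibrated so that, modulo it, the relation becomes linear in the polynomial generators $x_{2m}$ of $\BP_*\BBPb1_2$ with the single exception of the decomposable $b_{(i-1)}^{*p}$; verifying this amounts to tracking the $\mrm{I}$-adic filtration together with the $\circ$- and $*$-length filtrations through the relation, using the description of $\BP_*\BBPb1_n$ for $n<2p$ recalled above to name the generators occurring in each degree. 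One wrinkle is that for $i\ge2$ the summand $\mrm{I}(2)^{*p+1}$ is genuinely needed --- in degree $2p$, the case $i=1$, it is zero --- and is what lets one discard the longer $*$-monomials that appear; the general $i$ then follows by applying the same degree analysis to the higher homogeneous pieces of the Ravenel-Wilson relation.
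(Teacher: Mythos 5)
This proposition is not proved in the paper: it is quoted verbatim as \cite[Corollary~1.5]{hovey-2008}, so there is no ``paper's own proof'' to set alongside yours. What I can do is assess your proposal on its own terms.

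Your high-level plan --- deriving the relation by pushing the Ravenel--Wilson main (``$[p]$-series'') relation for the Hopf ring $\BP_*\BBPb1_*$ along $\BP\to\BP\bb1$, then extracting the homogeneous piece in $\BP_{2p^i}\BBPb1_2$ and reducing modulo the stated ideal --- is indeed the standard route, and the degree bookkeeping you do to pin down which terms can appear in that degree is correct. Your observation about $\mrm{I}(2)^{*p+1}$ being vacuous in degree $2p$ but nontrivial for $i\ge2$ is also right.

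Two genuine gaps. First, the paragraph invoking Wilson's theorem and $v_p\big((mp)!/(m!)^p\big)=s_p(m)$ does not do what you want it to. You present it as ``account[ing] for the coefficient and sign of the $-b_{(i-1)}^{*p}$ term,'' but $b_{(i-1)}^{*p}$ is a named Hopf-ring element appearing on the right-hand side of the relation with coefficient $-1$, not an unknown whose leading coefficient you must solve for. That coefficient $-1$, like the $v_i$ on $b_{(0)}$ and the $-p$ on $b_{(i)}$, is a structural output of rearranging the Hopf-ring main relation (formal-group sum of the $[v_j]\circ b^{\circ p^j}$ equals the $*$-theoretic $[p]$-series), not a multinomial/$p$-adic-valuation fact; dragging in divided-power combinatorics here is at best a non sequitur and at worst points in the wrong direction, since what one actually needs is that the multinomial-type terms you would see in expanding a $*$-power are exactly the ones absorbed by $\mrm I\cdot\mrm I(2)^{*2}+\mrm I(2)^{*p+1}$. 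Second --- and you say so yourself --- the decisive step, verifying that all remaining terms of the main relation land in $\mrm I^2\cdot\mrm I(2)+\mrm I\cdot\mrm I(2)^{*2}+\mrm I(2)^{*p+1}$ and that nothing of lower filtration survives, is deferred. That filtration analysis is the content of the proposition; without it the argument is a plausible outline, not a proof.
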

If we $\circ$-multiply this relation by $b_{(j)}$ then we obtain a relation in $\BP_*\BBPb1_4$:
\newlength{\mylength}
\settowidth{\mylength}{$_{\circ b_{(j-1)})^{*p}}$}
\begin{multline*}
  [v_1] \circ b_{(i-1)}^{\circ p} \circ b_{(j)} \equiv v_i \cdot
  b_{(0)} \circ b_{(j)} - p \cdot b_{(i)} \circ b_{(j)}
  -\hspace{-\mylength}\underbrace{b_{(i-1)}^{*p} \circ
    b_{(j)}}_{\hphantom{(b_{(i-1)} \circ b_{(j-1)})^{*p}}=(b_{(i-1)} \circ b_{(j-1)})^{*p}}
  \\ \mod \mrm{I}^2 \cdot \mrm{I}(4) + \mrm{I} \cdot I(4)^{*2} + \mrm{I}(4)^{*p+1}
\end{multline*}
The bracketed equality is a consequence of the Hopf ring distributive law (see the discussion just before Lemma~1.7 of \cite{hovey-ravenel-1995}). If $j=0$ then (as that discussion points out) the bracketed quantity equals $0$. The fact that $\BP_*\BBPb1_m \circ \mrm{I}(n)^{*k} \subseteq \mrm{I}(n+m)^{*k}$ is also a consequence of the Hopf ring distributive law.

Substituting $(i,j) \mapsto (i,i),(1,i),(j,i),(i,j)$ (and subtracting) produces, for $0<i<j$, the following congruences mod~$\mrm{I}^2 \cdot \mrm{I}(4) + \mrm{I} \cdot \mrm{I}(4)^{*2} + \mrm{I}(4)^{*p+1}$:
\begin{align*}
  \begin{cases}
    [v_1]_*\, w_{4p^i} & \equiv v_i \cdot x_{2(1+p^i)} - p \cdot x_{4p^i} - x_{4p^{i-1}}^p \\
    [v_1]_*\, y_{2(1+p^i)} & \equiv v_i \cdot x_4 - p \cdot x_{2(1+p^i)} \\
    [v_1]_*\, y_{2(p^i+p^j)} & \equiv v_j \cdot x_{2(1+p^i)} - p \cdot x_{2(p^i+p^j)} - x_{2(p^{i-1}+p^{j-1})}^p \\
    [v_1]_*(z_{2(p^i+p^j)}-y_{2(p^i+p^j)}) &\equiv v_i \cdot x_{2(1+p^j)} - v_j \cdot x_{2(1+p^i)}
  \end{cases}
\end{align*}

\medskip

These congruences suffice for computing characteristic numbers of the form $\s_n$ and $\s_{n,n'}$ since, by Lemma~\ref{lemma:thom}, such numbers vanish on the ideal $\mrm{I}^2 \cdot \mrm{I}(4) + \mrm{I} \cdot \mrm{I}(4)^{*2} + \mrm{I}(4)^{*p+1}$.

To compute $\s_n$ and $\s_{n,n'}$ of the right hand sides of these congruences note that, by the construction of $u_i$, the image of $v_i$ in:
\begin{align*}
  \pi_*\MSpin_{(p)} \iso \BP_*\BBPb1_4 \tensor_{\BP_*} \cdots
  \tensor_{\BP_*} \BP_*\BBPb1_{2p-2}/(u_1,u_2,\dots)
\end{align*}
can serve as the $\ZZ_{(p)}$--polynomial algebra generator of degree $2(p^i-1)$. So by Theorem~\ref{thm:mso-hurewicz}, $p$ divides $\s_{(p^i-1)/2}[v_i]$ to order~$1$. Similarly, if $2m$ is not of the form $2(p^i-1)$ then the image of $x_{2m}$ may serve as the $\ZZ_{(p)}$--polynomial algebra generator of degree $2m$. So by Theorem~\ref{thm:mso-hurewicz}, $p$ does not divide $\s_{m/2}[x_{2m}]$

Thus, by Lemma~\ref{lemma:thom}:
\begin{align*}
  \s_{p^i}\left( [v_1]_*w_{4p^i}\right) &= \s_{p^i}\left(  v_i \cdot x_{2(1+p^i)} - p \cdot x_{4p^i} - x_{4p^{i-1}}^p \right) \\
  &= \underbrace{\s_{p^i} \left( v_i \cdot x_{2(1+p^i)} \right)}_{=0} - p \cdot \s_{p^i} \left( x_{4p^i} \right) - \underbrace{\s_{p^i}\left(x_{4p^{i-1}}^p\right)}_{=0}
\end{align*}
and since:
\begin{align*}
  \ord_p(a\cdot b) &= \ord_p(a)+\ord_p(b)
\end{align*}
it follows that:
\begin{align*}
  \ord_p \big[ \s_{p^i} \left( [v_1]_*w_{4p^i}\right) \big] &= \underbrace{\ord_p\big[p\big]}_{=1}+\underbrace{\ord_p\left[\s_{p^i}\left(x_{4p^i}\right)\right]}_{=0} = 1
\end{align*}

Similarly, by Lemma~\ref{lemma:thom}:
\begin{align*}
  \s_{(p^i+1)/2,(p^j-1)/2}\big(z_{2(p^i+p^j)}-y_{2(p^i+p^j)}\big)
  &=\s_{(p^i+1)/2,(p^j-1)/2}\big( v_i \cdot x_{2(1+p^j)} - v_j \cdot x_{2(1+p^i)} \big) \\
  &=-\s_{(p^j-1)/2}\big(v_j\big) \cdot \s_{(p^i+1)/2}\big(x_{2(1+p^i)}\big)
\end{align*}
and it follows that:
\begin{align*}
  \ord_p &\left[ \s_{(p^i+1)/2,(p^j-1)/2}\big(z_{2(p^i+p^j)}-y_{2(p^i+p^j)}\big) \right] \\ &= \underbrace{\ord_p\left[\s_{(p^j-1)/2}\big(v_j\big)\right]}_{=1} + \underbrace{\ord_p\left[\s_{(p^i+1)/2}\big(x_{2(1+p^i)}\big)\right]}_{=0} = 1
\end{align*}

These and similar calculations show that $p$ divides:
\begin{align*}
  \s_{p^i}\big([v_1]_*w_{4p^i}\big) &\text{\;to order 1} \\
  \s_{(1+p^i)/2}\big([v_1]_*y_{2(1+p^i)}\big) &\text{\;to order 1} \\
  \s_{(p^i+p^j)/2}\big([v_1]_*y_{2(p^i+p^j)}\big) &\text{\;to order 1} \\
  \s_{(p^i+p^j)/2}\big([v_1]_*(z_{2(p^i+p^j)}-y_{2(p^i+p^j)})\big)
  &\text{\;to order $\infty$} \\
  \text{and\;}
  \s_{(p^i+1)/2,(p^j-1)/2}\big(z_{2(p^i+p^j)}-y_{2(p^i+p^j)}\big)
  &\text{\;to order 1} \\
  \text{but\;}
  \s_{(p^i+1)/2,(p^j-1)/2}\big([v_1]_* (v_j \cdot y_{2(1+p^i)})\big)
  &\text{\;to order 2}
\end{align*}
(Recall that by definition $\ord_p(0)=\infty$.)

Theorem~\ref{thm:mstring-gen} follows from these six facts, (1) from the first three and (2) from the last three. In more detail, the last three facts imply that the image of $z_{2(p^i+p^j)}-y_{2(p^i+p^j)}$ can be distinguished from the image of $y_{2(p^i+p^j)}$ and from the images of degree-$2(p^i+p^j)$ products of lower degree generators by the vanishing of the number $\s_{(p^i+p^j)/2}$ together with the nonvanishing mod~$p^2$ of the number $\s_{(p^i+1)/2,(p^j-1)/2}$.

\section{Cayley plane bundles}

In this section we summarize work of Borel \& Hirzebruch \cite{borel-hirzebruch-1958,borel-hirzebruch-1959} on characteristic classes of homogeneous spaces which we will use in the next section to prove Theorem~\ref{thm:ker-witten-genus}.

The Cayley plane is the homogeneous space $\OP^2=\Ff/\Spin(9)$. Much of what follows applies to any bundle with fiber a homogeneous space $G/H$, so we begin in that generality and later specialize to the case $G/H=\Ff/\Spin(9)$. 

Throughout this section let $G$ be a compact connected Lie group, let $\mrm{i}_{H,G} : H \into G$ be a maximal rank subgroup, and let $\mrm{i}_{T,H} : T \to H$ and $\mrm{i}_{T,G} : T \to G$ be a common maximal torus:
\begin{align*}
  \xymatrix@R15pt{ & H \ar[d]^{\mrm{i}_{H,G}} \\
    T \ar[ur]^{\mrm{i}_{T,H}} \ar[r]_{\mrm{i}_{T,G}} & G
  }
\end{align*}

Every $G/H$ bundle (with structure group $G$) pulls back from the universal $G/H$ bundle $\BH \to \BG$. That is, every $G/H$ bundle (with structure group $G$) fits into a pullback square:
\begin{align*}
  \xymatrix{ E \ar[r]^-{\til{g}} \ar[d]_\pi
    & \BH \ar[d]^{\Bi_{H,G}} \\
    Z \ar[r]^-g & \BG }
\end{align*}
where $g$ is unique up to homotopy. %

Let $\eta$ denote the bundle of tangents along the fibers of $\BH \to \BG$. Then the bundle of tangents along the fibers of $E \to Z$ is the pullback $\til{g}^*(\eta)$ and there is an exact sequence:
\begin{align*}
  0 \to \til{g}^*(\eta) \to \mrm{T}E \to \pi^* \mrm{T}Z \to 0
\end{align*}
This enables us to compute the characteristic classes of $\mrm{T}E$ from those of $\eta$ and $\mrm{T}Z$, e.g.:
\begin{align*}
  \p_1(\mrm{T}E)=\til{g}^*\p_1(\eta) + \pi^*\p_1(\mrm{T}Z)
\end{align*}

The characteristic classes of $\eta$, or rather their pullbacks to $\H^*(\BT,\ZZ)$, may in turn be computed using the beautiful methods of Borel-Hirzebruch.
To state their results precisely, we need to introduce some notation (see \cite[Ch.~1]{borel-hirzebruch-1958} for more detail).

\medskip

Let $V$ be the universal cover of the maximal torus $T$. Let $\Gamma$ be the unit lattice of $V$, i.e.\ the inverse image of the identity element of $T$. A real valued linear form on $V$ is called \emph{integral} if it takes integral values on $\Gamma$; the group of all such forms $\Hom(\Gamma,\ZZ)$ is naturally isomorphic to $\H^1(T,\ZZ)\iso\Hom(\pi_1(T),\ZZ)$. The adjoint representation of $T$ on the Lie algebra $\mathfrak{g}$ of $G$ is fully reducible, and there is a direct sum decomposition of $\mathfrak{g}$ into invariant subspaces:
\begin{align*}
  \mathfrak{g} = \mathfrak{a}_1 + \cdots + \mathfrak{a}_m + \mathfrak{t}
\end{align*}
where $\dim(\mathfrak{a}_i)=2$. The action on $\mathfrak{a}_i$ of an element $t$ of $T$ may be written:
\begin{align*}
  \begin{pmatrix}
    \cos 2\pi a_i(t) & -\sin 2\pi a_i(t) \\
    \sin 2\pi a_i(t) & \quad \cos 2\pi a_i(t)
  \end{pmatrix}
\end{align*}
The function $a_i:T\to\RR$ lifts to a nonzero integral linear form on $V$, also denoted $a_i$. The linear forms $\pm a_1,\dots,\pm a_m$ on $V$ are called the \emph{roots} of $G$. The decompositions of $\mathfrak{g}$ and $\mathfrak{h}$ may be chosen compatibly so that we may speak of the roots $\pm \bar{a}_1,\dots,\pm \bar{a}_k$ of $G$ \emph{complementary} to those of $H$.

Transgression in a principal $T$-bundle $P\to P/T$ associates to each element of $\H^1(T,\ZZ)$ an element of $\H^2(P/T,\ZZ)$. Since $\H^1(T,\ZZ)\iso\Hom(\Gamma,\ZZ)$ (as discussed above), this associates to each root of $G$, and more generally to each integral form, an element of $\H^2(P/T,\ZZ)$. For the universal $T$-bundle $\ET\to\BT$ we obtain an isomorphism $\H^2(\BT,\ZZ) \iso \Hom(\Gamma,\ZZ)$.

\begin{theorem*}[{\cite[Thm.~10.7]{borel-hirzebruch-1958}}]
  Let $P\to P/G$ be a principal $G$-bundle, $\rho$ the projection $P/T \to P/H$, and $\eta$ the bundle of tangents along the fibers of the $G/H$ bundle $P/H \to P/G$. Then:
  \begin{align*}
    \rho^*(\mrm{p}(\eta))=\prod (1+\bar{a}_j^2)
  \end{align*}
  where $\{\pm \bar{a}_j\}_{1\le j\le k}$ are the roots of $G$ complementary to those of $H$, regarded as elements of $\H^2(P/T,\ZZ)$.
\end{theorem*}

Applied to the principal $G$-bundle $\mrm{E}G\to \BG$, for which $\rho=\Bi_{T,G}:\BT\to\BG$, this gives a formula for (the pullback to $\H^*(\BT,\ZZ)$ of) the characteristic class $\s_I(\eta)$ of the bundle $\eta$ of tangents along the fibers of the universal $G/H$ bundle $\BH\to \BG$, namely:
\begin{align*}
  \Bi_{T,G}^*\big( \s_I(\eta) \big) = \s_I\big(\bar{a}_1^2,\dots,\bar{a}_k^2\big) \; \in \; \H^*(\BT,\ZZ)
\end{align*}

\bigskip

This formula together with the following Lie-theoretic description of the pushforward:
\begin{align*}
  \Bi_{H,G*} : \H^*(\BH,\ZZ) \to \H^*(\BG,\ZZ)
\end{align*}
will enable us to prove Theorem~\ref{thm:ker-witten-genus}. To describe the latter, we need to introduce further notation (again, see \cite[Ch.~1]{borel-hirzebruch-1958} for more detail).

\smallskip

Fix a positive definite metric on $\mathfrak{g}$ invariant under the adjoint representation of $G$. It determines a metric on $V$ and hence a canonical isomorphism between $V$ and its dual space $V^*$ as well as a metric on $V^*$. A symmetry $\mrm{S}_a$ of $V$ with respect to a hyperplane $a=0$ induces a symmetry of $V^*$, also denoted $\mrm{S}_a$, defined by:
\begin{align*}
  \mrm{S}_a(b) = b-2(a,b)(a,a)^{-1} \cdot a
\end{align*}
The Weyl group $\W(G)$ of $G$ is the group of automorphisms of $T$ induced by inner automorphisms of $G$ leaving $T$ invariant. It may also be viewed as the group of isometries of $V$ leaving $\Gamma$ and the root diagram invariant. It is generated by the symmetries $\mrm{S}_{a_i}$ to the hyperplanes $a_i=0$ ($i=1,\dots,m$). The \emph{sign} of an element $w$ of $\W(G)$, denoted $\sgn(w)$, is the determinant of $w$ viewed as a linear transformation of $V$; it always equals $\pm1$. Choose a basis $e_1,\dots,e_l$ for $V^*$. Call a root $a=a_1e_1+\cdots+a_le_l$ \emph{positive} if the first nonvanishing coefficient $a_i$ is $>0$. Call a positive root \emph{simple} if it is not the sum of two positive nonzero roots. The simple roots form a basis for $V^*$ and every root is a linear combination, with integral coefficients of the same sign, of simple roots.

Let $\til{\mrm{e}}(G/T) \in \H^*(\BT,\ZZ)$ be the Euler class of the bundle of tangents along the fibers of $\BT\to\BG$. Up to sign it is the product of a set of positive roots of $G$, regarded as elements of $\H^*(\BT,\ZZ)$.  More precisely, it is the product of the roots of an invariant almost complex structure on $G/T$. Note that $G/T$ always admits a complex structure and that although the individual roots associated to an almost complex structure depend on the almost complex structure, their product does not. (See \cite[\S12.3, \S13.4]{borel-hirzebruch-1958}.)

\smallskip

The key to describing $\Bi_{H,G*}$ is the following:

\begin{theorem}[Borel-Hirzebruch, {\cite[Thm.~20.3]{borel-hirzebruch-1959}}]
  \label{thm:borel-hirzebruch}
  If $t \in \H^*(\BT,\ZZ)$ then:
  \begin{align*}
    \sum_{w \in \W(G)} \sgn(w) \cdot w(t) = \Bi_{T,G}^*\big(\Bi_{T,G*}(t)\big) \cdot \til{\mrm{e}}(G/T)
  \end{align*}
\end{theorem}

\begin{corollary}
  \label{BHToBG}
  \label{cosetpushforward}
  If $h \in \H^*(\BH,\ZZ)$ then:
  \begin{align*}
    \Bi_{T,G}^* \, \Bi_{H,G*}(h)
    = \sum_{[w] \in \W(G)/\W(H)} w\left(
      \frac{\til{\mrm{e}}(H/T)}{\til{\mrm{e}}(G/T)} \, \Bi_{T,H}^*(h) \right)
  \end{align*}
  where the sum runs over the cosets of $\W(H)$ in $\W(G)$.
\end{corollary}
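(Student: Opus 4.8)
The plan is to derive Corollary~\ref{BHToBG} from Theorem~\ref{thm:borel-hirzebruch} together with the fact that the pushforward $\Bi_{H,G*}$ can be computed in stages through $\BT$. The key point is that for the maximal-rank pair $T \into H$ the relative pushforward $\Bi_{T,H*}$ is, after restricting along $\Bi_{T,H}^*$, given by the same Weyl-sum/Euler-class formula: since $H$ is compact connected with maximal torus $T$, Theorem~\ref{thm:borel-hirzebruch} applies verbatim to $H$, giving
\begin{align*}
  \Bi_{T,H}^*\Bi_{T,H*}(h') = \frac{1}{\til{\mrm{e}}(H/T)}\sum_{w\in\W(H)}\sgn(w)\,w(h')
\end{align*}
for $h' \in \H^*(\BT,\ZZ)$. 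The first step is therefore to observe $\Bi_{T,G*} = \Bi_{H,G*}\circ\Bi_{T,H*}$ (functoriality of pushforward along $\BT \to \BH \to \BG$), so that for any $h \in \H^*(\BH,\ZZ)$ we may compute $\Bi_{T,G}^*\Bi_{H,G*}(h)$ by first writing $h$ as a pushforward from $\BT$ and applying Theorem~\ref{thm:borel-hirzebruch} for $G$. But here I want to go the other way: I have $h$ already on $\BH$, not on $\BT$, so I should instead exploit that $\Bi_{T,H}^*\colon \H^*(\BH,\QQ)\to \H^*(\BT,\QQ)^{\W(H)}$ is an isomorphism and use the projection-formula identity $\Bi_{T,H*}\Bi_{T,H}^*(h) = \til{\mrm{e}}(H/T)\cdot h$ — equivalently, that the composite $\frac{1}{|\W(H)|}\sum_{w\in\W(H)} w(-)$ applied to $\til{\mrm{e}}(H/T)^{-1}\cdot(-)$ inverts $\Bi_{T,H}^*$ up to the antisymmetrization.

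Concretely, the second step is: pick a lift, i.e. choose $h' \in \H^*(\BT,\QQ)$ with $\Bi_{T,H*}(h') = h$; the natural choice, dictated by Theorem~\ref{thm:borel-hirzebruch} for $H$, is to take $h' = \frac{1}{|\W(H)|}\til{\mrm{e}}(H/T)\cdot\Bi_{T,H}^*(h)$ — one checks using the $H$-version of the theorem that $\Bi_{T,H}^*\Bi_{T,H*}(h')$ recovers $\Bi_{T,H}^*(h)$, and since $\Bi_{T,H}^*$ is injective on $\H^*(\BH,\QQ)$ this forces $\Bi_{T,H*}(h') = h$. (Integrality is not an issue because the final formula, being a sum over coset representatives of a manifestly well-defined class, lands back in $\H^*(\BT,\ZZ)$ — or one can clear denominators and work rationally throughout, as the excerpt implicitly does.) The third step then simply substitutes:
\begin{align*}
  \Bi_{T,G}^*\Bi_{H,G*}(h)
  &= \Bi_{T,G}^*\Bi_{T,G*}(h')
  = \frac{1}{\til{\mrm{e}}(G/T)}\sum_{w\in\W(G)}\sgn(w)\,w(h') \\
  &= \frac{1}{\til{\mrm{e}}(G/T)}\cdot\frac{1}{|\W(H)|}\sum_{w\in\W(G)}\sgn(w)\,w\!\left(\til{\mrm{e}}(H/T)\cdot\Bi_{T,H}^*(h)\right).
\end{align*}

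The final step is the bookkeeping that collapses the $\W(G)$-sum to a sum over cosets $\W(G)/\W(H)$. Writing $w = w_0 v$ with $v \in \W(H)$ and $w_0$ a coset representative, one uses that $\Bi_{T,H}^*(h)$ is $\W(H)$-invariant and that $\til{\mrm{e}}(H/T)$ is $\W(H)$-anti-invariant (it is, up to sign, a product of a set of positive roots of $H$, so $v(\til{\mrm{e}}(H/T)) = \sgn(v)\,\til{\mrm{e}}(H/T)$), hence $v\!\left(\til{\mrm{e}}(H/T)\Bi_{T,H}^*(h)\right) = \sgn(v)\,\til{\mrm{e}}(H/T)\Bi_{T,H}^*(h)$; combined with $\sgn(w_0 v) = \sgn(w_0)\sgn(v)$ and $\sgn(v)^2 = 1$, each of the $|\W(H)|$ terms in a coset contributes identically, cancelling the $\frac{1}{|\W(H)|}$ and leaving $\sum_{[w]} \sgn(w_0)\,w_0\!\left(\til{\mrm{e}}(H/T)\Bi_{T,H}^*(h)\right)/\til{\mrm{e}}(G/T)$, which — since $\til{\mrm{e}}(G/T)$ is $\W(G)$-anti-invariant — can be rewritten as $\sum_{[w]} w_0\!\left(\frac{\til{\mrm{e}}(H/T)}{\til{\mrm{e}}(G/T)}\Bi_{T,H}^*(h)\right)$, matching the claimed formula. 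I expect the main obstacle to be purely expository: making precise the sign conventions for the generalized Euler classes (which set of positive roots, and the compatibility of the almost complex structures on $H/T$ and $G/T$ so that $\til{\mrm{e}}(H/T) \mid \til{\mrm{e}}(G/T)$ up to a $\W(H)$-anti-invariant factor) so that the $\W(H)$-anti-invariance of $\til{\mrm{e}}(H/T)$ and the well-definedness of the coset sum both hold on the nose — the algebra itself is routine once those conventions are pinned down.
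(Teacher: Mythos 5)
Your proof is correct and follows essentially the same route as the paper: both reduce the claim to applying Theorem~\ref{thm:borel-hirzebruch} (for $G$) to the element $\frac{1}{|\W(H)|}\,\til{\mrm{e}}(H/T)\cdot\Bi_{T,H}^*(h)\in\H^*(\BT,\QQ)$, which both identify as a $\Bi_{T,H*}$--lift of $h$, and both then collapse the $\W(G)$--sum to a coset sum using $\W(H)$--anti-invariance of the Euler classes together with $\W(H)$--invariance of $\Bi_{T,H}^*(h)$. The one variation is how the lift is justified: you verify it by applying the $H$--version of the theorem and invoking rational injectivity of $\Bi_{T,H}^*$, whereas the paper gets it directly from the Euler characteristic identity $\Bi_{T,H*}\til{\mrm{e}}(H/T)=\chi(H/T)=|\W(H)|$ and the projection formula, which is a bit more economical (it does not need a second application of the Weyl-sum formula). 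One caution on a parenthetical you state as motivation: the identity $\Bi_{T,H*}\Bi_{T,H}^*(h)=\til{\mrm{e}}(H/T)\cdot h$ does not typecheck --- the left side lives in $\H^{*-\dim(H/T)}(\BH)$ while the right side would want $\til{\mrm{e}}(H/T)\in\H^{\dim(H/T)}(\BT)$ and $h$ to live in the same place --- and in fact, by the projection formula, $\Bi_{T,H*}\Bi_{T,H}^*(h)=\Bi_{T,H*}(1)\cdot h=0$ whenever $\dim(H/T)>0$. Happily your actual computation (the ``concretely'' step and onward) never uses this, so it is a harmless slip of phrasing rather than a gap.
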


(Note that this is a formula in the polynomial ring $\H^*(\BT,\ZZ)$.)

\begin{proof}
  Since $\Bi_{T,H*}\big(\til{\mrm{e}}(H/T)\big)=\chi(H/T)=|\W(H)| \in \H^0(\BH,\ZZ)$, write:
  \begin{align*}
    \Bi_{T,G}^* \, \Bi_{H,G*}(h) &= \Bi_{T,G}^* \, \Bi_{H,G*} \left(
      \frac{\Bi_{T,H*}\big(\til{\mrm{e}}(H/T)\big)}{|\W(H)|} \cdot h \right) \\
    \intertext{Apply the projection formula \cite[Prop.~8.2]{borel-hirzebruch-1958} to obtain:}
    \Bi_{T,G}^* \, \Bi_{H,G*}(h) &= \frac{1}{|\W(H)|} \, \Bi_{T,G}^* \, \Bi_{H,G*} \, \Bi_{T,H*} \left( \til{\mrm{e}}(H/T) \cdot \Bi_{T,H}^*(h) \right) \\
    &= \frac{1}{|\W(H)|} \, \Bi_{T,G}^* \, \Bi_{T,G*} \left(\til{\mrm{e}}(H/T) \cdot
      \Bi_{T,H}^*(h) \right) \\
    \intertext{Apply Theorem~\ref{thm:borel-hirzebruch} to obtain:}
    \Bi_{T,G}^* \, \Bi_{H,G*}(h) &= \frac{1}{|\W(H)|} \cdot \frac{1}{\til{\mrm{e}}(G/T)} \sum_{w \in \W(G)} \sgn(w) \cdot w\big(\til{\mrm{e}}(H/T) \cdot \Bi_{T,H}^*(h)\big) \\
    \intertext{Since $w\big(\til{\mrm{e}}(G/T)\big)=\sgn(w) \; \til{\mrm{e}}(G/T)$:}
    \Bi_{T,G}^* \, \Bi_{H,G*}(h) &= \frac{1}{|\W(H)|} \sum_{w \in \W(G)} w\left(
      \frac{\til{\mrm{e}}(H/T)}{\til{\mrm{e}}(G/T)} \, \Bi_{T,H}^*(h) \right)
    \intertext{Since $\W(G)$ acts on $\H^*(\BT,\ZZ)$ by ring homomorphisms, since if $w \in \W(H)$ then $w(\til{\mrm{e}}(H/T))=\sgn(w)\;\til{\mrm{e}}(H/T)$ and $w(\til{\mrm{e}}(G/T))=\sgn(w)\;\til{\mrm{e}}(G/T)$, and since $\Bi_{T,H}^*$ maps to the $\W(H)$-invariant subring of $\H^*(\BT,\ZZ)$, this sum may be written over the cosets of $\W(H)$ in $\W(G)$:}
    \Bi_{T,G}^* \, \Bi_{H,G*}(h) &= \sum_{[w] \in \W(G)/\W(H)} w\left(
      \frac{\til{\mrm{e}}(H/T)}{\til{\mrm{e}}(G/T)} \, \Bi_{T,H}^*(h) \right)
    \qedhere
  \end{align*}
\end{proof}

Now we specialize to the Cayley plane $G/H=\Ff/\Spin(9)$ (see \cite[\S19]{borel-hirzebruch-1958} and \cite[Plate~VIII]{bourbaki-1968} for more detail).

\smallskip

The extended Dynkin diagram of the root system $\Ff$ is:
\begin{align*}
  &\xymatrix@R1pt@C10pt@!C{ *{\bullet} \ar@{-}[r] & *{\circ} \ar@{-}[r] 
    & *{\circ} \ar@{=}[r] |*=0{>} & *{\circ} \ar@{-}[r] & *{\circ} \\
    -\til{a} \quad & a_1 & a_2 & a_3 & a_4 }
\intertext{A choice of simple roots is:
  \begin{align*}
    a_1 = e_2-e_3 \quad\quad\quad
    a_2=e_3-e_4 \quad\quad\quad
    a_3=e_4 \quad\quad\quad
    a_4=\tfrac12(e_1-e_2-e_3-e_4)
  \end{align*}
  Since the coefficient of $a_4$ in the maximal root:
  \begin{align*}
    \til{a}=2a_1+3a_2+4a_3+2a_4=e_1+e_2
  \end{align*}
  is prime, a theorem of Borel \& de Siebenthal \cite{borel-de-siebenthal-1949} implies that erasing $a_4$ from the extended Dynkin diagram gives the Dynkin diagram:}
&\xymatrix@R1pt@C10pt@!C{ *{\circ} \ar@{-}[r] & *{\circ} \ar@{-}[r] 
    & *{\circ} \ar@{=}[r] |*=0{>} & *{\circ} \\
    -\til{a}\quad & a_1 & a_2 & a_3 }
\end{align*}
of a subgroup of the compact Lie group $\Ff$. This type-$\mrm{B}_4$ subgroup is globally isomorphic to $\Spin(9)$, the 1-connected double cover of $\SO(9)$.

\medskip

The roots of this type-$\mrm{B}_4$ root subsystem are:
\begin{align*}
  \begin{cases}
    \pm e_i & 1 \le i \le 4 \\
    \pm e_i \pm e_j & 1 \le i < j \le 4
  \end{cases}
\end{align*}
The roots of $\Ff$ are these roots together with the complementary roots:
\begin{align*}
  \tfrac12 (\pm e_1 \pm e_2 \pm e_3 \pm e_4)
\end{align*}

Let $\mrm{T}$ be the standard maximal torus of $\SO(9)$ (see \cite[\S19.2]{borel-hirzebruch-1958}). Its preimage $\mrm{T}'$ under the double covering $\Spin(9)\to\SO(9)$ is a maximal torus of $\Spin(9)$ and hence also of $\Ff$.
The double covering $\mrm{T}'\to\mrm{T}$ determines an index-2 sublattice:
\begin{align*}
  \H^1(\mrm{T},\ZZ) \into\H^1(\mrm{T}',\ZZ)
\end{align*}
corresponding, under the identification $\H^1(T,\ZZ)\iso\Hom(\Gamma,\ZZ)$, to the index-2 sublattice:
\begin{align*}
  \ZZ\langle e_1,e_2,e_3,e_4 \rangle \into \ZZ\big\langle &\tfrac12(e_1+e_2+e_3+e_4), \tfrac12(e_1+e_2+e_3-e_4), \\& \tfrac12(e_1+e_2-e_3+e_4), \tfrac12(e_1-e_2+e_3+e_4) \big\rangle
\end{align*}

The following positive roots determine an almost complex structure on $\Spin(9)/\mrm{T}'$:
\begin{align*}
  \begin{cases}
    e_i & 1 \le i \le 4 \\
    e_i \pm e_j & 1 \le i < j \le 4
  \end{cases}
\end{align*}
These together with the following complementary positive roots determine an almost complex structure on $\Ff/\mrm{T}'$:
\begin{align*}
  \big\{\bar{a}_1,\dots,\bar{a}_8\big\} = \big\{ \tfrac12(e_1 \pm e_2 \pm e_3 \pm e_4) \big\}
\end{align*}
(The ordering of these roots will not matter.)

The 3~cosets of $\W(\Spin(9))$ in $\W(\Ff)$ are represented by the reflections:
\begin{align*}
  \big\{1, \; \mrm{S}_{a_4}, \; \mrm{S}_{a_4} \mrm{S}_{a_3} \mrm{S}_{a_4}\big\}
\end{align*}
which act, with respect to the basis $(e_1,\dots,e_4)$, by the matrices:
\begin{align*}
  \left\{
    \begin{pmatrix}
      1 & 0 & 0 & 0 \\
      0 & 1 & 0 & 0 \\
      0 & 0 & 1 & 0 \\
      0 & 0 & 0 & 1
    \end{pmatrix}, \;
    \tfrac12
    \begin{pmatrix}
      1 & \hphantom{-}1 & \hphantom{-}1 & \hphantom{-}1 \\
      1 & \hphantom{-}1 & -1 & -1 \\
      1 & -1 & \hphantom{-}1 & -1 \\
      1 & -1 & -1 & \hphantom{-}1
    \end{pmatrix}, \;
    \tfrac12
    \begin{pmatrix}
      \hphantom{-}1 & \hphantom{-}1 & \hphantom{-}1 & - 1 \\
      \hphantom{-}1 & \hphantom{-}1 & -1 & \hphantom{-}1 \\
      \hphantom{-}1 & -1 & \hphantom{-}1 & \hphantom{-}1 \\
      -1 & \hphantom{-}1 & \hphantom{-}1 & \hphantom{-}1
    \end{pmatrix}
  \right\}
\end{align*}
In particular they act on the set of positive complementary roots $\{\bar{a}_1,\dots,\bar{a}_8\}$ by:
\begin{align*}
  \{\bar{a}_1,\dots,\bar{a}_8\} = 
    \big\{ &\tfrac12(e_1 \pm e_2 \pm e_3 \pm e_4)\big\} \\
    \mrm{S}_{a_4}(\{\bar{a}_1\dots,\bar{a}_8\}) = \big\{ &e_1,e_2,e_3,e_4, \tfrac12(e_1+e_2+e_3-e_4),
    \tfrac12(e_1+e_2-e_3+e_4),\\& \tfrac12(e_1-e_2+e_3+e_4),
    \tfrac12(-e_1+e_2+e_3+e_4) \big\} \\
    \mrm{S}_{a_4}\mrm{S}_{a_3}\mrm{S}_{a_4}(\{\bar{a}_1,\dots,\bar{a}_8\}) = \big\{ &e_1,e_2,e_3,e_4,
    \tfrac12(e_1+e_2+e_3+e_4), \tfrac12(e_1+e_2-e_3-e_4),
    \\& \tfrac12(e_1-e_2+e_3-e_4),
    \tfrac12(-e_1+e_2+e_3-e_4) \big\}
\end{align*}

Thus:
\begin{proposition}
  \label{spin9f4pushforward}
  \begin{align*}
    \Bi_{\mrm{T}',\Ff}^* \Bi_{\Spin(9),\Ff*} \s_I(\eta) &=
    \frac{\s_I\big(\bar{a}_1^2,\dots,\bar{a}_8^2\big)}{\prod_i \bar{a}_i} + \mrm{S}_{a_4} \left(
      \frac{\s_I\big(\bar{a}_1^2,\dots,\bar{a}_8^2\big)}{\prod_i \bar{a}_i} \right) + \mrm{S}_{a_4} \mrm{S}_{a_3} \mrm{S}_{a_4}
    \left( \frac{\s_I\big(\bar{a}_1^2,\dots,\bar{a}_8^2\big)}{\prod_i \bar{a}_i} \right)
  \end{align*}
  where the complementary roots $\{\pm\bar{a}_1,\dots,\pm\bar{a}_8\} = \big\{\tfrac12(\pm e_1 \pm e_2 \pm e_3 \pm e_4)\big\}$ are regarded as elements of $\H^2(\mrm{BT}',\ZZ)$ and $\mrm{S}_{a_4},\mrm{S}_{a_4}\mrm{S}_{a_3}\mrm{S}_{a_4}$ act on them as described above.
\end{proposition}

\section{Proof of Theorem~\ref{thm:ker-witten-genus}}

The purpose of this section is to prove the following theorem, which was already stated in the introduction.

\theoremstyle{plain}
\newtheorem*{main-theorem}{Theorem~\ref{thm:ker-witten-genus}}
\begin{main-theorem}
  Away from~6, the ideal of $\pi_*\MString$ consisting of (bordism classes of) Cayley plane bundles with connected structure group is precisely the kernel of the Witten genus. In other words, the extension of this ideal in $\pi_*\MString\inv16$ is precisely the kernel of:
  \begin{align*}
    \phi_{\mathrm{W}} \tensor \ZZ\inv16 \;:\; \pi_*\MString\inv16 \;\to\; \pi_*\tmf\inv16 \iso \ZZ\inv16[\GG_4,\GG_6]
  \end{align*}
  where $\GG_4, \GG_6$ have degree $8,12$ respectively.
\end{main-theorem}

Since the Witten genus carries the subring of $\pi_*\MString\inv16$ generated by elements of degree $\le12$ isomorphically to the polynomial ring $\ZZ\inv16[\GG_4,\GG_6]$, and since (as discussed in the introduction) the Witten genus of any $\OP^2$ bundle with connected structure group vanishes, Theorem~\ref{thm:ker-witten-genus} can be proved by showing that $\OP^2$ bundles with connected structure group can serve as generators for $\pi_*\MString\inv16$ in dimensions $>12$.  And this can be done by constructing a set $S$ of such $\OP^2$ bundles which satisfy the conditions of Theorem~\ref{thm:mstring-gen} in all dimensions except $8$ and $12$.

\subsection*{Construction of \textit{M}$^\text{4\textit{n}}$}

The first step is to construct, for each $n\ge4$, a $\OP^2$ bundle $M^{4n}$ which satisfies condition~(1) of Theorem~\ref{thm:mstring-gen}. It will be a $\ZZ$-linear combination (topologically, a disjoint union with some string structures possibly reversed) of total spaces of $\OP^2$ bundles whose base spaces are products of two carefully chosen complete intersections.

Let $i:V^m(d_1,\dots,d_r) \into \CP^{m+r}$ denote a smooth complete intersection of degree $(d_1,\dots,d_r)$ and complex dimension~$m$. Consider the $\OP^2$ bundle pulling back from the universal bundle $\OP^2 \to \B\Spin(9) \to \B\Ff$ by a classifying map $g$ of the form:
\begin{align*}
  \hspace{-20pt}
  \xymatrix@C15pt{
    E \ar[rrrr]^-{\til{g}} \ar[d]_-\pi &&&& \B\Spin(9) \ar[d]^{\Bi_{\Spin(9),\Ff}} \\
    *\txt{$W\!\!=\!V^{m}({d_1,\dots,d_r})$ \\\quad\quad\quad\quad $\times
      V^{m'}({d'_1,\dots,d'_{r'}})$} \ar[r]^-{i \times i'}
    \ar@(d,dl)[rrrr]_g & \CP^{m+r} \times \CP^{m'+r'} \ar[r] &
    \CP^\infty \times \CP^\infty \ar[r] \ar@(ur,ul)[rr]^f & \mrm{BT}' \ar[r]
    & \B\Ff }
\end{align*}
where $m+m'=2n-8$.

Let $\H^*(\CP^\infty \times \CP^\infty) \iso \ZZ[x_1,x_2]$. Choose the map $f:\CP^\infty \times \CP^\infty \to \B\Ff$ so that $(e_1,e_2,e_3,e_4)$ pull back to $n_f \cdot (x_1,x_1,x_2,-x_2)$ respectively for some integer $n_f\ge1$. The generators $(e_2,e_3,e_4,\tfrac12(e_1-e_2-e_3-e_4))$ of the lattice $\H^2(\mrm{BT}',\ZZ)$ then pull back to $n_f \cdot (x_1,x_2,-x_2,0)$ respectively.

The degrees $(d_1,\dots,d_r)$ and $(d'_1,\dots,d'_{r'})$ need to be chosen so that $\p_1(\mrm{T}E)=0$ since this implies that $E$ admits a string structure. The exact sequences of vector bundles:
\begin{align*}
  \tag*{}\label{eqn:vb-ex-seq}
  \xymatrix@R1pt@C15pt{ 0 \ar[r]& \til{g}^*(\eta) \ar[r]& \mrm{T}E
    \ar[r]& \pi^* \mrm{T}\big(V^{m}({d_1,\dots,d_r})\times
      V^{m'}({d'_1,\dots,d'_{r'}})\big) \ar[r]& 0 \\
    0 \ar[r]& i^* \bigoplus_j \mrm{O}(d_j) \ar[r] &
    i^*\mrm{T}\CP^{m+r} \ar[r] & \mrm{T}V^m({d_1,\dots,d_r})
    \ar[r]& 0 }
\end{align*}
imply that:
\begin{align*}
  \p_1(\mrm{T}E) &= \tilde{g}^*\p_1(\eta) +
  \pi^*\p_1 \mrm{T}\big(V^{m}({d_1,\dots,d_r})\times V^{m'}({d'_1,\dots,d'_{r'}})\big) \\
  &= \tilde{g}^*\p_1(\eta) +
  \pi^*i^*\big[\p_1\mrm{T}\CP^{m+r} - \sum_j
  \p_1\mrm{O}(d_j)\big] + \pi^*i'^*\big[\p_1\mrm{T}\CP^{m'+r'} - \sum_{j'} \p_1\mrm{O}(d'_{j'})\big] \\
  &= \tilde{g}^*\p_1(\eta) + \pi^*(i\times i')^*\big[\big(
  m+r+1-\sum_j d_j^2 \big)x_1^2 + \big( m'+r'+1-\sum_{j'} (d'_{j'})^2
  \big)x_2^2\big]
  \intertext{The image of $\p_1(\eta)$ in $\H^4(\mrm{BT}')$ is $\sum
    \tfrac14(e_1\pm e_2 \pm e_3 \pm e_4)^2 =
    2(e_1^2+e_2^2+e_3^2+e_4^2)$, which pulls back to
    $4n_f(x_1^2+x_2^2)$. So:}
  \p_1(\mrm{T}E) &= \pi^* (i \times i')^* \Big[ (4n_f+m+1+r-\sum_j d_j^2)x_1^2 +
  (4n_f+m'+1+r'-\sum_{j'} (d'_{j'})^2)x_2^2 \Big]
\end{align*}

The following lemma shows that, for any given $m$ and $m'$, it is simple to choose degrees $(d_1,\dots,d_r)$ and $(d'_1,\dots,d'_{r'})$ so that this quantity vanishes, \emph{provided $n_f$ is sufficiently large.}  (The fact that the degrees can all be taken to be $2$'s and $3$'s is relevant since these are the primes inverted in this paper.)

\begin{lemma}
  \label{lemma:simple-sum-of-squares}
  For any integer $n\ge14$ there exist integers $a,b \ge 0$ so that:
  \begin{align*}
    n+(a+b) = a \cdot 2^2 + b \cdot 3^2
  \end{align*}
\end{lemma}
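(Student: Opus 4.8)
The plan is first to simplify the equation. Subtracting $a+b$ from both sides turns the asserted identity $n+(a+b)=a\cdot 2^2+b\cdot 3^2$ into $n=3a+8b$. So the lemma is equivalent to the statement that every integer $n\ge 14$ is a nonnegative integer combination of $3$ and $8$. Since $\gcd(3,8)=1$, this is the classical coin (Frobenius) problem for the pair $\{3,8\}$: the largest non-representable integer is $3\cdot 8-3-8=13$, so every $n>13$ is representable. I would either cite this directly or give the short self-contained argument that follows.

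Given $n\ge 14$, I would reduce modulo $3$. Let $b_0\in\{0,1,2\}$ be the unique residue with $b_0\equiv -n\pmod 3$; concretely $b_0=0,2,1$ according as $n\equiv 0,1,2\pmod 3$. Then $8b_0\equiv -b_0\equiv n\pmod 3$, so $3$ divides $n-8b_0$; set $b=b_0$ and $a=(n-8b_0)/3$. The only thing left to check is $a\ge 0$, i.e.\ $n\ge 8b_0$. For $b_0\in\{0,1\}$ this is immediate since $8b_0\le 8\le 14\le n$. The remaining case $b_0=2$ occurs exactly when $n\equiv 1\pmod 3$, and the smallest such $n$ with $n\ge 14$ is $n=16=8b_0$, so $n\ge 8b_0$ holds there too (and a fortiori for all larger such $n$). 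Hence $a,b\ge 0$ and $n=3a+8b$. Reversing the substitution then gives $a\cdot 2^2+b\cdot 3^2=4a+9b=(3a+8b)+(a+b)=n+(a+b)$, which is the claimed identity.

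There is no serious obstacle here: the lemma merely repackages an elementary fact about the numerical semigroup generated by $3$ and $8$. The only point requiring any attention is the case analysis above, specifically verifying that the hypothesis $n\ge 14$ is strong enough in the worst residue class — where one actually needs $n\ge 16$, which is automatic once $n\ge 14$ and $n\equiv 1\pmod 3$. It is worth noting that the bound $14$ in the statement is sharp, since $13$ is not of the form $3a+8b$; this is why $n\ge 14$, rather than some weaker hypothesis, is the natural assumption, and it corresponds to the geometric requirement that $n_f$ be taken sufficiently large in the construction preceding the lemma.
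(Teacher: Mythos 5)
Your proof is correct. After simplifying to $n = 3a+8b$, you observe this is the Frobenius (coin) problem for $\{3,8\}$, with Frobenius number $3\cdot 8 - 3 - 8 = 13$, so every $n\ge 14$ is representable; you then give a direct mod-$3$ argument to exhibit $(a,b)$ explicitly and verify nonnegativity. The paper instead proceeds by induction on $n$ in steps of $3$, using the three base cases $n=14,15,16$ (corresponding to the three residues mod $3$) and the step $(a,b)\mapsto(a+1,b)$ that adds $3$ to $n$. These are really two presentations of the same idea, and in fact your formula $a=(n-8b_0)/3$ with $b_0=3\lceil n/3\rceil - n$ agrees exactly with the closed forms $a(n)=3n-8\lceil n/3\rceil$, $b(n)=3\lceil n/3\rceil - n$ that the paper records as an aside after its proof. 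What your version adds is an explicit identification of the hypothesis $n\ge 14$ as the Frobenius bound, which makes clear that the bound is sharp ($n=13$ fails) rather than merely sufficient; what the paper's induction buys is brevity and avoidance of any appeal to the coin-problem formula. Either is an acceptable proof.
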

\begin{proof}
  This follows by induction since:
  \begin{align*}
    14+3 &= 2^2 + 2^2 + 3^2 &
    15+5 &= 2^2 + 2^2 + 2^2 + 2^2 + 2^2 &
    16+2 &= 3^2 + 3^2
  \end{align*}
  and since:
  \begin{align*}
    n+(a+b) &= a \cdot 2^2 + b \cdot 3^2  &&\implies&
    (n+3)+(a+1+b) &= (a+1) \cdot 2^2 + b \cdot 3^2 \qedhere
  \end{align*}
\end{proof}

As an aside, the values for $a$ and $b$ constructed in the proof are:
\begin{align*}
  a(n) &= 3n-8 \left\lceil n/3 \right\rceil &
  b(n) &= 3 \left\lceil n/3 \right\rceil -n
\end{align*}

\smallskip

Although the preceding lemma suffices to prove the results of this paper, the reader may find the reliance on complete intersections of arbitrarily high codimension unsatisfying. It is therefore worth noting that the following replacement for Lemma~\ref{lemma:simple-sum-of-squares} would make it possible to prove the results of this paper using complete intersections of codimension $\le4$.

\begin{conjecture}
  If $n\ge25$ then the GCD:
  \begin{align*}
    \mrm{GCD} \; \left\{ \prod_{i=1}^4 d_i \; \Big| \; 4n+4+1 =
      \sum_{i=1}^4 d_i^2,  \; d_i>0 \right\}
  \end{align*}
  has the form $2^a 3^b$ with $a+b>0$. In fact as $n$ increases from $25$, this $\mathrm{GCD}$ takes the values:
  \begin{align*}
    2^4 \cdot 3 \quad 2^3 \quad 2^4 \cdot 3^2 \quad 2^3 \cdot 3 \quad
    2^4 \quad 2^3 \cdot 3^2
  \end{align*}
  and then repeats from the beginning.
\end{conjecture}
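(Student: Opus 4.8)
\emph{A possible line of attack.} Write $N=4n+5$, so $N\ge 105$ and $N\equiv 1\pmod 4$, and set
$G(N):=\gcd\{\,x_1x_2x_3x_4 : x_i\in\ZZ_{>0},\ x_1^2+x_2^2+x_3^2+x_4^2=N\,\}$.
The plan is to determine $\ord_2 G(N)$ and $\ord_3 G(N)$ exactly by congruence arguments, to show $\ord_\ell G(N)=0$ for every prime $\ell\ge 5$, and then to note that $\ord_2 G(N)$ will depend only on $n\bmod 2$ while $\ord_3 G(N)$ depends only on $n\bmod 3$, so that $G(N)$ is periodic in $n$ with period $\mrm{lcm}(2,3)=6$. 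The first (weaker) assertion is immediate: reducing $\sum x_i^2=N$ modulo $4$, an odd square is $\equiv 1$ and an even square $\equiv 0\pmod 4$, so the number of odd coordinates is $\equiv N\equiv 1\pmod 4$, hence equals $1$. Thus every representation has three even coordinates, so $8\mid x_1x_2x_3x_4$ and $\ord_2 G(N)\ge 3>0$.

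\emph{The exact power of $2$.} Writing the even coordinates as $2e_1,2e_2,2e_3$ and the odd one as $f$ gives $\ord_2(x_1x_2x_3x_4)=3+\ord_2(e_1e_2e_3)$, where $f^2+4(e_1^2+e_2^2+e_3^2)=N$. A sum of three squares that is $\equiv 3\pmod 8$ must have all three of them odd, so $\ord_2(e_1e_2e_3)=0$ is attainable iff $(N-f^2)/4\equiv 3\pmod 8$ for some odd $f$; checking the odd squares modulo $32$ shows this happens exactly for $N\equiv 5\pmod 8$, i.e. $n$ even, in which case $\ord_2 G(N)=3$, whereas for $N\equiv 1\pmod 8$ ($n$ odd) every representation has $\ord_2\ge 4$. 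To see that $\ord_2 G(N)=4$ in the latter case one must exhibit, for each such $n\ge 25$, a representation with two coordinates $\equiv 2\pmod 4$ and one $\equiv 4\pmod 8$, i.e. $N=f^2+4e_1^2+4e_2^2+16g^2$ with $f,e_1,e_2,g$ odd; and for $n$ even one must make $(N-f^2)/4$ a sum of three odd squares for a small odd $f$, which is automatic once that quantity is positive and $\equiv 3\pmod 8$.

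\emph{The power of $3$, and primes $\ell\ge 5$.} Modulo $3$ a nonzero square is $\equiv 1$, so the number of coordinates prime to $3$ is $\equiv N\pmod 3$: for $N\equiv 2\pmod 3$ ($n\equiv 0$) exactly two coordinates are multiples of $3$, forcing $\ord_3\ge 2$; for $N\equiv 0\pmod 3$ ($n\equiv 1$) at least one is, forcing $\ord_3\ge 1$; for $N\equiv 1\pmod 3$ ($n\equiv 2$) a representation with no multiple of $3$ gives $\ord_3=0$. Equality in each case requires a representation whose multiples of $3$ are as few as the congruence permits and each exactly divisible by $3$ (e.g. equal to $3$). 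As for primes $\ell\ge 5$: since $0<x_i\le\sqrt N$, no $x_i$ is divisible by a prime $\ell>\sqrt N$, so such primes never divide $G(N)$; for $5\le\ell\le\sqrt N$ one needs a single representation with every coordinate prime to $\ell$, and the condition is locally solvable at $\ell$ since $\sum x_i^2=N$ has $\ell^3+O(\ell)$ solutions over $\ZZ/\ell$ of which only $O(\ell^2)$ are degenerate (the finitely many small $\ell$ being checked by inspection).

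\emph{The main obstacle.} Each step after a congruence bound reduces to exhibiting a representation of $N$ by $x_1^2+\cdots+x_4^2$ lying in a prescribed residue class modulo a fixed number ($8$, $9$, or $\ell$). As $x_1^2+\cdots+x_4^2$ is the unique form in its genus, these are exactly the representations furnished by Kloosterman's refinement of the circle method: once locally solvable they exist for all $N$ past an effective bound $N_0$, and the finitely many $n$ with $25\le n\le(N_0-5)/4$ can be dispatched by direct computation. The genuinely delicate point is the mod-$\ell$ condition for primes $\ell$ comparable to $\sqrt N$: there the modulus is not fixed relative to $N$, the main term $\asymp N/\ell^3$ no longer clearly dominates the error, and a separate argument — necessarily rigid, since $\ell\mid x_i$ forces $x_i\in\{\ell,2\ell\}$ — seems required in the range $N^{1/2-\delta}<\ell\le\sqrt N$. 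Granting all this, assembling $\ord_2 G(N)\in\{3,4\}$ (a function of $n\bmod 2$) with $\ord_3 G(N)\in\{0,1,2\}$ (a function of $n\bmod 3$) and $\ord_\ell G(N)=0$ for $\ell\ge 5$ yields the six values $2^4\cdot3,\ 2^3,\ 2^4\cdot3^2,\ 2^3\cdot3,\ 2^4,\ 2^3\cdot3^2$ in the stated cyclic order.
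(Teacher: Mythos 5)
The paper does not prove this statement: it is explicitly labelled a \emph{conjecture}, included only to indicate that it would allow the preceding constructions to use complete intersections of codimension $\le 4$ rather than the high-codimension intersections furnished by Lemma~\ref{lemma:simple-sum-of-squares}. So there is no paper proof to compare yours against, and a complete argument would in fact settle an open question left in the paper.

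Your congruence bookkeeping is sound and correctly recovers the conjectured shape. With $N=4n+5\equiv 1\pmod 4$, reduction mod $4$ does force exactly one odd coordinate, so $\ord_2\ge 3$ always; the mod-$32$ check of $(N-f^2)/4$ correctly separates $n$ even ($\ord_2$ can equal $3$) from $n$ odd ($\ord_2\ge 4$); and the mod-$3$ count of coordinates prime to $3$ gives the lower bounds $2,1,0$ for $n\equiv 0,1,2\pmod 3$ respectively. Combining a period-$2$ answer in $\ord_2$ with a period-$3$ answer in $\ord_3$ and $\ord_\ell=0$ for $\ell\ge 5$ does reproduce the six values $2^4\cdot 3,\ 2^3,\ 2^4\cdot 3^2,\ 2^3\cdot 3,\ 2^4,\ 2^3\cdot 3^2$ in the stated order. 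So the skeleton is right.

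But the skeleton is all there is: each lower bound must be matched by an \emph{existence} statement, and you have (candidly) left those open. Concretely, you need, for every $n\ge 25$: a representation realizing $\ord_2=3$ or $4$ exactly; one realizing $\ord_3$ exactly $0$, $1$, or $2$; and, for each prime $5\le\ell\le\sqrt N$, a representation with all four coordinates prime to $\ell$. The appeal to Kloosterman-refined circle method handles fixed moduli ($8$, $9$, small $\ell$) for $N$ large, but you correctly flag that it breaks down for $\ell$ of size comparable to $\sqrt N$, where the expected count of good representations is $O(1)$ and the error term is no longer negligible. That range is genuinely the crux: there $\ell\mid x_i$ forces $x_i=\ell$ (or $2\ell$), so the question becomes whether $N-\ell^2$ (resp.\ $N-4\ell^2$) can fail to be a sum of three positive squares in a way that poisons every representation of $N$ — a rigidity problem that your outline identifies but does not resolve. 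Until that gap (and the finitely-many-small-$n$ verification you defer to computation) is closed, this remains a plausible strategy rather than a proof, which is consistent with the paper's own decision to leave the statement as a conjecture.
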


We have to carefully choose the degrees $(d_1,\dots,d_r)$ and $(d'_1,\dots,d'_{r'})$ to ensure that the total space $E$ admits a string structure. However, these degrees have little effect on the Pontrjagin number $\s_n[E]$ which we compute next. Indeed, for dimension reasons:
\begin{align*}
  \s_n[E]=(i\times i')^*\;f^*\;\Bi_{\Spin(9),\Ff*}\;\s_n(\eta)
\end{align*}
Since the base space $W$ is a product of complete intersections, the pullback $(i \times i')^*x_1^mx_2^{m'}$ equals $(\prod_jd_j)(\prod_{j'}d'_{j'})$ times the fundamental class $[W]$. So the key is to compute the coefficients of the polynomial $f^*\Bi_{\Spin(9),\Ff*}\s_n(\eta)$ or, rather, their $\mathrm{GCD}$ as a function of $n$. This calculation lies at the heart of this paper. (It was the smoking gun which led to Theorem~\ref{thm:ker-witten-genus}.)

\begin{proposition}
  \label{prop:pushForwardBinom}
  \begin{align*}
    f^* \Bi_{\Spin(9),\Ff*} \s_n(\eta) = 2 n_f^{2n-8} \sum_{k=2}^{n-2}
    \left[ \binom{2n}{2} - \binom{2n}{2k} \right] x_1^{2k-4}
    x_2^{2n-2k-4}
  \end{align*}
\end{proposition}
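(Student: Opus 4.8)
The plan is to compute $\Bi_{T,\Ff}^*\Bi_{\Spin(9),\Ff*}\s_n(\eta)$ directly from Corollary~\ref{spin9f4pushforward}, then pull back along $f$. By that corollary the pushforward is a sum of three terms, each of the form $w\!\left(\frac{\s_n(r_1^2,\dots,r_8^2)}{\prod_i r_i}\right)$ for $w\in\{1,\sigma_4,\sigma_4\sigma_3\sigma_4\}$, where the $r_i=\tfrac12(e_1\pm e_2\pm e_3\pm e_4)$ are the positive complementary roots of $\Ff/T$. Note $\s_n(r_1^2,\dots,r_8^2)=\sum_i r_i^{2n}$. The first step is thus to understand the rational function $\frac{\sum_i r_i^{2n}}{\prod_i r_i}$ as a symmetric-ish expression in $e_1,\dots,e_4$; the product $\prod_i r_i=\tfrac1{16}\prod_{\pm}(e_1\pm e_2\pm e_3\pm e_4)$ is (up to a constant) a known symmetric polynomial — in fact proportional to $e_1^4+e_2^4+e_3^4+e_4^4 - \text{(lower symmetric terms)}$, or more usefully it is the generalized Euler class whose role is precisely to make the Weyl-alternating sum divisible.

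However, rather than wrestle with the division abstractly, the cleaner route is to exploit that we only need the answer \emph{after} applying $f^*$, which sends $(e_1,e_2,e_3,e_4)\mapsto n_f\cdot(x_1,x_1,x_2,-x_2)$. Under this specialization the eight roots $r_i$ map to $n_f\cdot$(four copies of $x_1$ coming from $\tfrac12(e_1+e_2\pm e_3\mp e_4)=\tfrac12(2x_1+0)$ type combinations... ) — concretely, $\tfrac12(e_1\pm e_2\pm e_3\pm e_4)$ with $e_1=e_2=x_1$, $e_3=-e_4=x_2$ gives values in $\{\pm x_1\}$ or $\{\pm x_2\}$ or $0$ depending on signs. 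So after $f^*$ each term $w\!\left(\frac{\sum r_i^{2n}}{\prod r_i}\right)$ becomes an explicit rational function of $x_1,x_2$, and the three terms must be added. The $0/0$ and $x/0$ artifacts that appear in individual $w$-terms before summing are the main technical nuisance: I would handle this by keeping $e_1,\dots,e_4$ generic, summing the three terms first (the Borel–de Siebenthal coset sum is designed so the total is a genuine polynomial), and only then specializing via $f^*$. Expanding $\sum_i w(r_i)^{2n}$ for each $w$ using the explicit matrices listed just before Corollary~\ref{spin9f4pushforward}, and dividing by the common $\til{\mrm{e}}(\Ff/T)$, should collapse — after the specialization $e_1=e_2=x_1, e_3=-e_4=x_2$ — to a sum of monomials $x_1^{2a}x_2^{2b}$ with $a+b=2n-8$.

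The heart of the computation is the binomial bookkeeping. After specialization the numerator contributions are powers of $(x_1\pm x_2)$ and of $x_1,x_2$ individually; expanding $(x_1+x_2)^{2n}$ and $(x_1-x_2)^{2n}$ and collecting, one gets sums of $\binom{2n}{2k}x_1^{2k}x_2^{2n-2k}$, and the claimed shape $\binom{2n}{2}-\binom{2n}{2k}$ (with the overall $2n_f^{2n-8}$ and the index shift to $x_1^{2k-4}x_2^{2n-2k-4}$ reflecting that we have divided by something of degree $8$ in the $e_i$, i.e. degree $4$ in $x_1,x_2$ worth of numerator cancellation, plus the extra factor killing the $k=0,1,n-1,n$ terms) should drop out. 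I expect the main obstacle to be exactly this: correctly tracking how division by $\prod_i r_i$ — which after specialization has the form $c\cdot n_f^8 x_1^4 x_2^4/(\text{something})$, or more precisely vanishes on the specialization locus so that one must divide \emph{before} specializing — produces the specific combination $\binom{2n}{2}-\binom{2n}{2k}$ rather than just $\binom{2n}{2k}$, and why the sum runs $k=2$ to $n-2$. The appearance of $\binom{2n}{2}$ as the "$k=1$ value" suggests that the division by the Euler class, evaluated via L'Hôpital/Taylor expansion along the degenerate locus $e_1=e_2$, $e_3=-e_4$, contributes precisely the lowest nonvanishing term, pinning the constant to $\binom{2n}{2}=\binom{2n}{2(n-1)}$; verifying this carefully is the one step I would not expect to be routine.
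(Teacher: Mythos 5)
Your plan follows the same route as the paper: invoke Corollary~\ref{spin9f4pushforward}, pull back along $f$, and reduce to binomial bookkeeping in $x_1,x_2$. So the approach is right, and you have correctly identified the one genuine subtlety: under $f^*$ (i.e.\ $e_1=e_2=n_fx_1$, $e_3=-e_4=n_fx_2$) the denominators $w(\prod r_i)$ for $w=1$ and $w=\sigma_4$ vanish while the numerators do not, so one cannot specialize the three rational summands term by term.

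Where you and the paper diverge is in how this is handled. You propose to sum the three rational functions over $\til{\mrm{e}}(\Ff/T)$ in the $e$-variables, obtain the honest polynomial, and only then apply $f^*$; or alternatively to take a L'H\^opital/Taylor expansion along the degenerate locus. The paper's own proof is shorter and purely algebraic: after normalizing $n_f=1$, $(x_1,x_2)=(x,1)$ (by homogeneity), it writes the specialized sum over the common denominator $-x^4(1-x^2)$, replaces $\tfrac{1}{1-x^2}$ by the formal geometric series $1+x^2+x^4+\cdots$, and observes that two sub-brackets in the numerator differ by exactly $2\binom{2n}{2}$. This telescoping is what produces the $\binom{2n}{2}$ term — it is an algebraic cancellation, not the leading Taylor coefficient of a limit. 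Your L'H\^opital heuristic would land on the same number, but the geometric-series manipulation is cleaner and makes the divisibility by $x^4(1-x^2)$ manifest, which is why the final answer is a genuine polynomial.

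One small point you flagged that has a simpler resolution than you expect: the range $k=2$ to $n-2$ is not an artifact of the division. The paper's simplification actually produces $\sum_{k=2}^{n-1}\bigl[\binom{2n}{2}-\binom{2n}{2k}\bigr]x^{2k-4}$, and the $k=n-1$ term simply vanishes because $\binom{2n}{2(n-1)}=\binom{2n}{2}$. So the proposition's upper limit $n-2$ and the proof's upper limit $n-1$ agree. Your proposal is a sound outline and, if carried through, converges on the paper's computation; it just stops before the binomial arithmetic that is the actual content of the proof.
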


\begin{proof}
  Since the polynomial in question is homogeneous in $n_f x_1$ and $n_f x_2$, we can, without loss of generality, simplify notation by setting $n_f=1$ and $(x_1,x_2)=(x,1)$.

  Proposition~\ref{spin9f4pushforward} gives the polynomial in the form of a power series:
  \begin{multline*}
    -\frac{1}{x^4}  \big(1+x^2+x^4+\cdots\big) \\
    \cdot \Biggl[\underbrace{-2 + (x+1)^{2n} + (x-1)^{2n}} -x^2 \big[
      \overbrace{-2+ (x+1)^{2n} + (x-1)^{2n} +2\binom{2n}2 } \big] \\
      +x^{2n}\big[2\binom{2n}2-2\big] +2x^{2n+2} \Biggr]
    \end{multline*}
    {The bracketed quantities differ by $2\dbinom{2n}2$ so the power series simplifies to the polynomial:}
    \begin{gather*}
      -\frac1{x^4} \cdot \Big[ -2 + (x+1)^{2n} + (x-1)^{2n}
      -2\binom{2n}2 (x^2+x^4+\cdots+x^{2n-2}) - 2x^{2n} \Big]
      \intertext{which simplifies further to:}
      2 \sum_{k=2}^{n-1} \left[ \binom{2n}2-\binom{2n}{2k} \right]
      x^{2k-4} \qedhere
  \end{gather*}
\end{proof}

\begin{proposition}
  \label{prop:gcddiffbinom}
  For any integer $n\ge4$ and any odd prime $p$:
  \begin{align*}
    \ord_p \left[ \mathop{\mrm{GCD}}_{1<k<n-1} \left\{ \binom{2n}{2} - \binom{2n}{2k} \right\} \right] =
    \begin{cases}
      1 & \text{if $2n=p^i-1$ or $2n=p^i+p^j$ for some $0 \le i \le j$} \\
      0 & \text{otherwise}
    \end{cases}
  \end{align*}
\end{proposition}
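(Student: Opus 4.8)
The plan is to analyze the quantity $\binom{2n}{2} - \binom{2n}{2k}$ prime by prime using Kummer's theorem, exactly as was done for the oriented case in Section~\ref{section:pontrjagin}. Fix an odd prime $p$ and write $N = 2n$. Since $\binom{N}{2} = \binom{N}{N-2}$ and we are taking a GCD over $1<k<n-1$, it suffices to understand the $p$-adic order of $\binom{N}{2} - \binom{N}{2k}$ for $2k$ ranging over the even integers strictly between $2$ and $N-2$. The key observation is that $\ord_p\left[\binom{N}{2}-\binom{N}{2k}\right] = \min\left(\ord_p\binom{N}{2}, \ord_p\binom{N}{2k}\right)$ whenever those two orders differ, and that by Kummer's theorem $\ord_p\binom{N}{j}$ equals the number of carries when adding $j$ to $N-j$ in base $p$.

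First I would dispose of the ``generic'' case. If $N = 2n$ is \emph{not} of the form $p^i-1$ and \emph{not} of the form $p^i + p^j$ (with $0 \le i \le j$), then I claim the GCD is a $p$-adic unit, i.e. there is some even $2k$ with $1 < k < n-1$ such that $\binom{N}{2} - \binom{N}{2k}$ is prime to $p$. Since $N \ne p^i - 1$, the base-$p$ expansion of $N+1$ is not a single power of $p$; equivalently, writing $N$ in base $p$, we have $\ord_p\binom{N}{2} = 0$ unless $N$ ends in the digit $0$ or $1$ with a carry, which forces special structure. The cleanest route: $\ord_p\binom{N}{2} = 0$ precisely when adding $2$ to $N-2$ in base $p$ produces no carry, i.e. when the last digit of $N$ in base $p$ is $\ge 2$; in that case the GCD is already a unit and we are done. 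So assume the last base-$p$ digit of $N$ is $0$ or $1$. One then chooses $2k$ adaptively to make $\binom{N}{2k}$ a unit while $\binom{N}{2}$ is not (or vice versa), using that $N$, not being of the excluded forms, has ``enough room'' in its base-$p$ digits; this adaptive choice is the combinatorial heart of the argument and parallels, but is more delicate than, the oriented analogue.

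Next the two special cases. If $N = p^i - 1$, then every digit of $N$ in base $p$ is $p-1$, so adding \emph{any} $j$ to $N-j$ produces carries in a predictable pattern; in particular $\ord_p\binom{N}{j} \ge 1$ for all $0 < j < N$, and one checks $\ord_p\binom{N}{2} = 1$ exactly (a single carry from the units place) while for suitable even $2k$ one also gets $\ord_p\binom{N}{2k} = 1$ but with the two values differing mod $p^2$, forcing the GCD to have order exactly $1$. If $N = p^i + p^j$ with $0 \le i \le j$, the base-$p$ expansion of $N$ has one or two nonzero digits equal to $1$; one computes $\ord_p\binom{N}{2}$ and finds it is $1$ when $i = 0$ (or otherwise $0$, handled above), and then exhibits an even $2k$ realizing the complementary order to pin the GCD at exactly $1$; the subcase $i=j=0$, i.e. $N=2$, is vacuous since then $n=1 < 4$.

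The main obstacle I expect is the adaptive choice of $2k$ in the generic case: one must show that for \emph{every} $N$ outside the two excluded families there exists an admissible even index $2k \in (2, N-2)$ with $\ord_p\left[\binom{N}{2} - \binom{N}{2k}\right] = 0$, and this requires a careful case split on the base-$p$ digits of $N$ (especially the low-order digits and the positions of the first few nonzero digits), together with the parity constraint that $2k$ be even. The evenness restriction is a genuine nuisance not present in the $\binom{2n+1}{i}$ computation of the earlier lemma, and it is presumably why the statement needs $n \ge 4$ rather than $n \ge 1$. I would organize the digit analysis by whether the units digit of $N$ is $0$, $1$, or $\ge 2$, and within the first two cases by the location of the next nonzero digit, reducing in each branch to an explicit small choice of $k$ whose binomial coefficient Kummer's theorem certifies as a unit.
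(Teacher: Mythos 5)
Your ``cleanest route'' claim in the generic case is false and is the central gap. You assert that if the last base-$p$ digit of $N=2n$ is $\ge 2$, so that $\binom{N}{2}$ is a $p$-adic unit, then ``the GCD is already a unit and we are done.'' But the GCD in question is a GCD of \emph{differences} $\binom{N}{2}-\binom{N}{2k}$, and a difference of two units can be divisible by $p$. Indeed the case $N=p^i-1$ is precisely a counterexample to your claim: there the last digit of $N$ is $p-1\ge 2$, so $\binom{N}{2}$ is a unit, yet all $\binom{N}{2k}\equiv 1 \pmod p$ and the GCD has $p$-adic order exactly $1$. The quantity $\ord_p\bigl[\binom{N}{2}-\binom{N}{2k}\bigr]$ is $\min(\ord_p\binom{N}{2},\ord_p\binom{N}{2k})$ \emph{only} when those orders differ, as you yourself note; when both are $0$ the difference can have any positive order. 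What you need is the observation, which is the organizing idea of the paper's proof, that $p$ divides the GCD iff \emph{all} the binomial coefficients $\binom{2n}{2k}$, $0<2k<2n$, are congruent to one another mod~$p$. That condition splits cleanly into two branches: either they are all $\equiv 0$ (this is exactly the hypothesis of Lemma~\ref{lemma:gcdbinom}, giving $2n=p^i+p^j$), or they are all nonzero and congruent (Kummer forces $2n=l\cdot p^i-1$ with $0<l<p$, and then Lucas's theorem applied to $\binom{2n}{p^i+1}$ versus $\binom{2n}{2}$ forces $l=1$, i.e.\ $2n=p^i-1$). Your sketch never makes that reduction and so cannot carry out the ``adaptive choice of $2k$'' you invoke.

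There is a second, quieter gap: pinning the order at exactly $1$ (rather than $\ge 1$) in the two special cases is not an easy check. The paper's proof needs a case split on the size of $i$, Granville's theorem, Wolstenholme's theorem, and Morley's congruence to establish that the relevant differences are not $\equiv 0 \pmod{p^2}$; it also uses the hypothesis $2n\ge 16$ specifically to rule out small exceptional primes (e.g.\ $2n=p+1$ or $2n=p-1$ only occur for $p\ge 17$, which neutralizes the denominators $12$ appearing in the explicit mod-$p^2$ congruences). Your sketch attributes the $n\ge 4$ hypothesis to the evenness constraint on $2k$, which is not the actual reason, and waves at ``the two values differing mod~$p^2$'' without noticing that this is where most of the number-theoretic work lives.
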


The key behind this is the following lemma.

\begin{lemma}
  \label{lemma:gcdbinom}
  For any integer $n>1$ and any odd prime $p$:
  \begin{align*}
    \ord_p \left[ \mathop{\mrm{GCD}}_{0<k<n} \binom{2n}{2k} \right] =
    \begin{cases}
      1 & \text{if $2n=p^i+p^j$ for some $0 \le i \le j$} \\
      0 & \text{otherwise}
    \end{cases}
  \end{align*}
\end{lemma}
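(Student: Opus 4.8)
The statement to prove is the evaluation of $\ord_p$ of the GCD over $0<k<n$ of the even binomial coefficients $\binom{2n}{2k}$. The natural tool is Kummer's theorem (already invoked in the excerpt): $\ord_p\binom{2n}{2k}$ equals the number of carries when adding $2k$ to $2n-2k$ in base $p$. So the quantity $\ord_p[\mathrm{GCD}_{0<k<n}\binom{2n}{2k}]$ is the \emph{minimum}, over all ways of writing $2n$ as an ordered sum $a+b$ with $a,b>0$ and both even, of the number of base-$p$ carries in that sum. Since $p$ is odd, the parity constraint on $a$ and $b$ is not a serious obstruction: the minimum of the carry count over all splittings $a+b=2n$ with $0<a<2n$ is already achieved (or can be arranged to be achieved) by a splitting with $a$ even, because one can adjust the low digit without affecting the minimum carry count. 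So the first reduction is to the purely base-$p$ combinatorial quantity $c(N) := \min_{0<a<N}(\text{number of carries adding }a\text{ to }N-a\text{ in base }p)$, specialized to $N=2n$.

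\textbf{Key steps.} First, I would establish the elementary fact that $c(N)=0$ if and only if $N$ can be split as $a+(N-a)$ with no carries, i.e.\ iff the base-$p$ digits of $N$ can be partitioned — digit by digit, splitting each digit $d_\ell$ as $d_\ell = a_\ell + b_\ell$ with $0\le a_\ell,b_\ell$ and $a_\ell+b_\ell=d_\ell$ — into two nonzero numbers $a,b$. Such a carry-free split exists precisely when $N$ is \emph{not} of the form $p^i$ or $p^i+p^j$ (with $i\le j$): if $N$ has a digit $\ge 2$, or has at least three nonzero digits, or has two nonzero digits one of which is $\ge 1$ and the configuration allows it — in short, whenever $N$ is not $p^i$ or $p^i+p^j$ — we can peel off a summand in a carry-free way. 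Conversely if $N=p^i$, the only splittings are forced and for $0<a<p^i$ one always gets at least one carry (and exactly one can be achieved, e.g.\ $a=p^i-1$, $b=1$, giving $i$ carries — no wait, that gives $i$ carries; better: any split of $p^i$ has at least one carry, and $a=1,b=p^i-1$ gives exactly one carry). Similarly $N=p^i+p^j$: every split has at least one carry, and one achieves exactly one. Second, I would verify that the ``$\le 1$'' half and the ``$\ge 1$'' half of $\ord_p$ in the special cases $2n=p^i-1$ and $2n=p^i+p^j$ are correct, being careful that the case $2n=p^i-1$ enters the statement of Proposition~\ref{prop:gcddiffbinom} but, interestingly, \emph{not} the statement of this lemma — so here the only special values are $2n=p^i+p^j$, and $2n=p^i$ is impossible since $2n$ is even and $p$ is odd (so $p^i$ is odd). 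That simplification is worth flagging: the single-nonzero-digit case $N=p^i$ never arises, so the lemma reduces cleanly to: the GCD-order is $1$ iff $2n$ has exactly two nonzero base-$p$ digits each equal to $1$ (i.e.\ $2n=p^i+p^j$ with $i<j$, or $2n=2p^i$, but $2p^i$ has a digit equal to $2$ and hence admits the carry-free split $p^i+p^i$ — so actually only $i<j$), and is $0$ otherwise.

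\textbf{Main obstacle.} The only genuinely delicate point is the bookkeeping around the even-summand restriction ($2k$ and $2n-2k$ rather than arbitrary $a,b$) together with the endpoint exclusion $0<k<n$. I would handle it by showing: (a) for $2n$ not of the form $p^i+p^j$, there is a carry-free splitting $2n=a+b$ with $a,b>0$ and $a$ (hence $b$) even — one constructs it by distributing digits and, if necessary, shifting a unit between the units-digit halves, which is possible because $p$ is odd means the units digit of $2n$ is even only when... hmm, actually the units digit of $2n$ in base $p$ need not be even; but one only needs $a$ itself even, and one can always arrange the units digit $a_0$ of $a$ to be even (pick $a_0\in\{0,2,4,\dots\}$ with $a_0\le d_0$; if $d_0\ge 2$ or there's another nonzero digit to source from, this works, and if not then $2n=p^i$, excluded). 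This requires a short case check that I expect to be the fiddly heart of the argument; (b) for $2n=p^i+p^j$, show that $\binom{2n}{2k}$ is divisible by $p$ for every $0<k<n$ — immediate from Kummer, since any even split of $p^i+p^j$ (indeed any split at all with both parts positive) forces a carry — and that it is divisible by $p$ exactly once for at least one such $k$, e.g.\ $2k=2p^i$ when $i\ge 1$, or a suitable small even value when $i=0$, giving precisely one carry. Everything else is a routine consequence of Kummer's theorem; the write-up should lead with the reduction to ``minimum number of carries'' and dispatch the two special-form cases, keeping the even-summand adjustment as a one-line observation since $p$ is odd.
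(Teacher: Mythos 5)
Your overall strategy — Kummer's theorem, base-$p$ digit analysis, and minimizing carries over even splits — is the same as the paper's, but your treatment of the parity constraint has a genuine error that breaks the case $2n = 2p^i$, which the lemma explicitly includes (the $i=j$ option in ``$0\le i\le j$'').

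You write that ``$2p^i$ has a digit equal to $2$ and hence admits the carry-free split $p^i+p^i$ --- so actually only $i<j$,'' concluding the GCD-order is $0$ when $2n=2p^i$. This is false: since $p$ is odd, $p^i$ is odd, so $p^i+p^i$ is a split into two \emph{odd} halves and corresponds to no $\binom{2n}{2k}$ with $0<k<n$. In fact the only carry-free nontrivial split of $2p^i$ in base $p$ is $p^i+p^i$ (at every position except $i$ both digits must be $0$; at position $i$ the digits must sum to $2$, and excluding the trivial split $0+2p^i$ forces $1+1$), and this split is unavailable under the evenness constraint. Hence every $\binom{2p^i}{2k}$ with $0<2k<2p^i$ has at least one carry, and the GCD-order is $1$, not $0$. (Check: $p=3$, $2n=6$ gives $\binom{6}{2}=\binom{6}{4}=15$, and $\ord_3(15)=1$.) This is precisely the point at which your claimed reduction --- that the minimum carry count over unconstrained splits $a+b=2n$ equals the minimum over even splits --- fails; the two minima genuinely differ when $2n=2p^i$. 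The paper sidesteps this by never making that reduction: it exhibits explicit \emph{even} carry-free splits ($2p^i$ and $p^i+p^j$, the latter even because odd$+$odd$=$even) in the ``otherwise'' cases, and proves the ``$\ge 1$'' direction directly for $2n=p^i+p^j$ \emph{including} $i=j$. Your write-up should keep the parity constraint in force throughout rather than wave it away, and should retain $i=j$ as a genuine special case.
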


It is worth comparing this result to the better known result that for any integer $n>1$ and any prime~$p$:
\begin{align*}
  \ord_p \left[ \mathop{\mrm{GCD}}_{0<k<n} \binom{n}{k} \right] =
  \begin{cases}
    1 & \text{if $n=p^i$ for some integer $i\ge0$} \\
    0 & \text{otherwise}
  \end{cases}
\end{align*}
Notice that, for any given integer $n>1$, at most one prime divides the latter $\mrm{GCD}$ whereas several primes may divide the former. For example, if $n=7$ then $2n=7^1+7^1=13^0+13^1$ and indeed $\mrm{GCD}_{0<k<7} \binom{14}{2k} = 7 \cdot 13$.

\begin{proof}[Proof of Lemma~\ref{lemma:gcdbinom}]
  By Kummer's theorem $\binom{2n}{2k}$ is divisible by $p$ if and only if there is at least 1 carry when adding $2k$ to $2n-2k$. Consider the base-$p$ expansion $\sum n_i p^i$ of an even integer $2n$. If there is a digit $n_i\ge2$ then there is no carry when adding $2p^i$ to $2n-2p^i$. If there are 2 distinct nonzero digits $n_i,n_j$ then there is no carry when adding $p^i+p^j$ to $2n-p^i-p^j$. If $2n=p^i+p^j$ and $0<2k<2n$ then there is always a carry when adding $2k$ to $2n-2k$, even if $i=j$. These 3 facts together imply the first part of the lemma. The second part of the lemma follows from the fact that if $j>0$ then there is precisely 1 carry when adding $(p-1)p^{j-1}$ to $p^i+p^j-(p-1)p^{j-1}$. (If $j=0$ then the second part of the lemma is vacuous.)
\end{proof}

\begin{proof}[Proof of Proposition~\ref{prop:gcddiffbinom}]
  If an odd prime $p$ divides the $\mathrm{GCD}$ then all the binomial coefficients $\binom{2n}{2k}$ for $0<2k<2n$ must be congruent mod~$p$. If they are all congruent to $0$ mod~$p$ then Lemma~\ref{lemma:gcdbinom} applies and $2n=p^i+p^j$ for some $0\le i \le j$. So suppose that the binomial coefficients are all nonzero mod~$p$. By Kummer's theorem this happens precisely when for each $0<2k<2n$ there are no carries when adding $2k$ to $2n-2k$. This in turn happens precisely when $2n=l\cdot p^i-1$ for some $i>0$ and some (odd) $0<l<p$. According to Lucas's theorem (see \cite[\S1]{granville97}), if $l>1$ then:
  \begin{align*}
    \binom{l\cdot p^i-1}{p^i+1}
    &\equiv \binom{p-1}1 \binom{p-1}0 \cdots \binom{p-1}0
    \binom{l-1}1 \equiv 1-l \mod p
  \end{align*}
  However:
  \begin{align*}
    \binom{l\cdot p^i-1}2 \equiv 1 \mod p
  \end{align*}
  So all the binomial coefficients can be congruent mod~$p$ only if $l=1$, and indeed the congruence $(1+x)^{p^i} \equiv 1+x^{p^i}$ mod~$p$ implies that:
  \begin{align*}
    (1+x)^{p^i-1} \equiv (1+x^{p^i})(1+x)^{-1} =
    1-x+x^2-x^3+\cdots+x^{p^i-1} \mod p
  \end{align*}
  and hence that:
  \begin{align*}
    \binom{p^i-1}{2k} \equiv 1 \mod p
  \end{align*}
  for all $0<2k<p^i-1$. 

  \medskip

  It remains to show that the $\mathrm{GCD}$ is never divisible by $p^2$ for $p$ odd. By the preceding argument it remains only to show this when $2n=p^i+p^j$ or $2n=p^i-1$ for $0\le i \le j$. Remember that by assumption $2n\ge16$.

  Suppose first that $2n=p^i+p^j$. If $i>1$ then there are at least 2 carries when adding $2$ to $p^i+p^j-2$; so by Kummer's theorem $\binom{2n}2$ is congruent to $0$ mod~$p^2$ while by Lemma~\ref{lemma:gcdbinom} $\binom{2n}{2k}$ is nonzero mod~$p^2$ for some $0<2k<2n$.
If $i\le1$ then since $1+1<16$ we may assume that $j\ge1$ and split into 2 cases: $p^j+1$ and $p^j+p$. When $j\ge2$ the 1st case can be handled as when $i>1$. The remaining cases are handled by the following straightforward congruences mod~$p^2$:
  \begin{align*}
    \binom{p+1}2-\binom{p+1}4 &\equiv \tfrac{5}{12}p &
    \binom{p^j+p}2-\binom{p^j+p}4 &\equiv -\tfrac14(p^j+p)
  \end{align*}
  The coefficient $\tfrac5{12}$ is not a problem since $2n=p+1\ge16$ only if $p\ge17$.

  Suppose now that $2n=p^i-1$. Consider the following congruences mod~$p^2$:
  \begin{align*}
    \binom{p^i-1}2 &\equiv 1 - \tfrac32 p^i &
    \binom{p-1}{4} &\equiv 1-\tfrac{25}{12}p &
    \binom{p^i-1}{p^{i-1}+p^{i-2}} &\equiv 1-p
  \end{align*}
  The 1st and 2nd are immediate, and subtracting them gives the desired result for $i=1$. (The resulting coefficient $-\tfrac32+\tfrac{25}{12}=\tfrac7{12}$ of $p$ is not a problem since $2n=p-1\ge16$ only if $p\ge17$.) Subtracting the 3rd congruence from the 1st gives the desired result when $i\ge2$ but proving the 3rd congruence is more subtle. Here, and quite often in what follows, we rely on the following powerful theorem.

  \theoremstyle{plain}
  \newtheorem*{gran-thm}{Granville's theorem}
  \begin{gran-thm}[{\cite[Thm.~1]{granville97}}]
    Suppose that a prime power $p^q$ and positive integers $n=m+r$ are given. Write $n=n_0+n_1p+\cdots+n_dp^d$ in base $p$, and let $N_j$ be the least positive residue of $[n/p^j]$ mod~$p^q$ for each $j\ge0$ (so that $N_j=n_j+n_{j+1}p+\cdots+n_{j+q-1}p^{q-1}$); also make the corresponding definitions for $m_j,M_j,r_j,R_j$. Let $e_j$ be the number of indices $i\ge j$ for which $n_i<m_i$ (that is, the number of `carries', when adding $m$ and $r$ in base $p$, on or beyond the $j$th digit). Then:
    \begin{align*}
      \frac1{p^{e_0}} \equiv (\pm1)^{e_{q-1}}
      \left(\frac{(N_0!)_p}{(M_0!)_p (R_0!)_p}\right)
      \left(\frac{(N_1!)_p}{(M_1!)_p (R_1!)_p}\right)
      \cdots
      \left(\frac{(N_d!)_p}{(M_d!)_p (R_d!)_p}\right)
      \mod p^q
    \end{align*}
    where $(\pm1)$ is $(-1)$ except if $p=2$ and $q\ge3$. Here $(n!)_p$ denotes the product of those integers $\le n$ which are not divisible by $p$.
  \end{gran-thm}

  We need to show that the 3rd congruence holds for $i\ge2$ but assume first that $i\ge3$. Then according to Granville's theorem the binomial coefficient $\binom{p^i-1}{p^{i-1}+p^{i-2}}$ is congruent to:
  \begin{gather*}
    \frac{((p^2-1)!)_p}{(p!)_p \cdot ((p^2-p-1)!)_p}
    \cdot
    \frac{((p^2-1)!)_p}{((p+1)!)_p \cdot ((p^2-p-2)!)_p}
    \cdot
    \frac{((p-1)!)_p}{(1!)_p \cdot ((p-2)!)_p} \mod p^2
  \end{gather*}
    {Gathering common factors gives:}
    \begin{align*}
    \binom{p^i-1}{p^{i-1}+p^{i-2}} &\equiv \left(\frac{(1-p)(2-p)\cdots((p-1)-p))}{(p!)_p}\right)^2
    \cdot \frac{p^2-p-1}{p+1} \cdot (p-1) \mod p^2
    \\
    &\equiv
    \Big(1-\underbrace{p(1+\tfrac12+\tfrac13+\cdots+\tfrac1{p-1})}_{\equiv0}\Big)^2
    \cdot (1-p) \mod p^2
    \end{align*}
    {The bracketed quantity is congruent to $0$ mod~$p^2$ since by Wolstenholme's theorem \cite[Thm.~116]{hardy-wright-1979} (``Wolstenholme\dots he was despondent and dissatisfied and consoled himself with mathematics and opium''---Sir Leslie Stephen, Virginia Woolf's father) the $(p-1)$st harmonic number is congruent to $0$ mod~$p^2$ for $p>3$ and to $2p$ for $p=3$. Thus we obtain:}
    \begin{align*}
      \binom{p^i-1}{p^{i-1}+p^{i-2}} \equiv 1^2 \cdot (1-p) = 1-p \mod p^2
    \end{align*}
  If $i=2$ then the first factor in the congruence provided by Granville's theorem disappears, and the square in the following congruences therefore does too but, since $1^2=1$, this does not affect the final result.
\end{proof}

\subsection*{Construction of \textit{N}$^{\text{2(\textit{p}$^\textit{i}$+\textit{p}$^\textit{j}$)}}$}

\newcommand{\pip}{\ensuremath{{(p^i+1)/2}}}
\newcommand{\pjm}{\ensuremath{{(p^j-1)/2}}}
\newcommand{\tpip}{\ensuremath{{p^i+1}}}
\newcommand{\tpjm}{\ensuremath{{p^j-1}}}

The second step is to construct, for each prime $p>3$ and $0<i<j$, a $\OP^2$ bundle $N^{2(p^i+p^j)}$ which satisfies condition~(2) of Theorem~\ref{thm:mstring-gen}. 
Throughout this section let $p>3$ and $0<i<j$ be arbitrary but fixed and, to simplify notation, let:
\begin{align*}
  n&=\tfrac12(p^j-1) &
  n'&=\tfrac12(p^i+1)
\end{align*}

The goal then is to construct a $\OP^2$ bundle $N^{4(n+n')}$ with:
\begin{align*}
  \s_{n+n'}[N^{4(n+n')}]&=0\\
  \s_{n,n'}[N^{4(n+n')}] &\not\equiv 0 \mod p^2
\end{align*}
To do this, we will construct two $\OP^2$ bundles $E_1$ and $E_2$ and define:
\begin{align*}
  N^{4(n+n')} = \mrm{LCM}\big(\s_{n+n'}[E_1],\s_{n+n'}[E_2]\big) \cdot
  \left( \frac{E_1}{\s_{n+n'}[E_1]} - \frac{E_2}{\s_{n+n'}[E_2]}
  \right)
\end{align*}
Then $\s_{n+n'}[N^{4(n+n')}]=0$, so all that will remain will be to show that $\s_{n,n'}[N^{4(n+n')}] \not\equiv 0$ mod~$p^2$. To do so, it will suffice to show that:
\begin{align*}
  \s_{n,n'}[E_1] &\equiv 0 \mod p^2 \\
  \s_{n,n'}[E_2] &\not\equiv 0 \mod p^2 \\
  \ord_p \; \s_{n+n'}[E_1] &\le \ord_p \;
  \s_{n+n'}[E_2]
\end{align*}

\medskip

Above we saw that the characteristic number $\s_n[E]$ depends only on the image of $\s_n(\eta)$ in $\H^*(E)$ and \emph{not} on the Pontrjagin classes of the base $W$. The characteristic number $\s_{n,n'}[E]$ is more subtle, however. Indeed, for a bundle $\OP^2 \to E \xto{\pi} W$ classified as before by a map $g : W \to \B\Ff$,
we have:
\begin{align*}
  \s_{n,n'}(\mrm{T}E) \;=\;
  & \til{g}^*\s_{n,n'}(\eta) \\
  &+ \pi^*\s_{n}(\mrm{T}W) \cdot \til{g}^*\s_{n'}(\eta) \\
  &+ \pi^*\s_{n'}(\mrm{T}W) \cdot
  \til{g}^*\s_{n}(\eta) \\&+
  \pi^*\s_{n,n'}(\mrm{T}W) 
  \intertext{Applying the $\H^*(W)$-module homomorphism $\B\pi_*$ (which decreases degrees by 16) gives:}
  \B\pi_* \s_{n,n'}(\mrm{T}E) \;=\;
  & g^*\Bi_*\s_{n,n'}(\eta) \\
  &+ \s_{n}(\mrm{T}W) \cdot g^*\Bi_*\s_{n'}(\eta) \\
  &+ \s_{n'}(\mrm{T}W) \cdot
  g^*\Bi_*\s_{n}(\eta)
\end{align*}
To compute the last two terms, note that the 2nd exact sequence of vector bundles on p.~\pageref{eqn:vb-ex-seq} implies that:
\begin{align*}
  \s_{n}(\mrm{T}W) &= \s_n\big(\mrm{T}V^m(d_1,\dots,d_r) \times
  \mrm{T}V^{m'}(d'_1,\dots,d'_{r'})\big) \\
  &= i^* \Big( \s_{n}(\CP^{m+r}) - \sum_j \s_{n} \mrm{O}(d_j) \Big) +
  i'^* \Big( \s_{n}(\CP^{m'+r'}) - \sum_{j'} \s_{n} \mrm{O}(d'_{j'}) \Big) \\
  &= (i \times i')^* \Big[ \Big(m+r+1-\sum_j d_j^{2n} \Big) x_1^{2n} +
  \Big(m'+r'+1-\sum_{j'} (d'_{j'})^{2n} \Big) x_2^{2n} \Big]
\end{align*}

\medskip

Let $E_1$ be the $\OP^2$ bundle obtained by taking:
\begin{align*}
  (m,m')=(2n-2,2n'-6)=(p^j-3,p^i-5)
\end{align*}
in the construction of $E$ above. Then for dimension reasons:
\begin{align*}
  \s_{n}(\mrm TV^m)=\s_{n}(\mrm TV^{m'})=\s_{n'}(\mrm TV^{m'})=0
\end{align*}
and by Proposition~\ref{prop:pushForwardBinom}:
\begin{multline*}
  \B\pi_*\s_{n,n'}(\mrm TE_1) = \\
  g^* \Bi_*\s_{n,n'}(\eta)
  + \Big(m+r+1-\sum_j d_j^{2n'}\Big) \cdot \left[
    \binom{p^j-1}2-\binom{p^j-1}{p^j-p^i}\right] \cdot (i \times i')^*
  x_1^m x_2^{m'}
\end{multline*}
  {Part (1) of Corollary~\ref{cor:sn1n2eta} below shows that $g^* \Bi_*\s_{n,n'}(\eta) \equiv 0$ mod $p^2$ and Granville's theorem can be used to show that both binomial coefficients are congruent to $1$ mod~$p^2$ so:}
  \begin{align*}
    \B\pi_*\s_{n,n'}[E_1] \equiv\; 0 \mod p^2
  \end{align*}

\smallskip

Let $E_2$ be the $\OP^2$ bundle obtained by taking:
\begin{align*}
  (m,m')=(p^{j-1}-3,p^j-p^{j-1}+p^i-5)
\end{align*}
in the construction of $E$ above. Then for dimension reasons $\s_{n}(\mrm TV^m)=\s_{n}(\mrm TV^{m'})=0$. If $i=j-1$ then $\s_{n'}(\mrm TV^m)=0$ as well. So by Proposition~\ref{prop:pushForwardBinom}:
\begin{multline*}
  \B\pi_*\s_{n,n'}(\mrm TE_2) = \\
  \begin{aligned}
    g^* & \Bi_*\s_{n,n'}(\eta) \\    
  &+ \Big(m+r+1-\sum_j d_j^{p^i+1}\Big) \cdot \left[
    \binom{p^j-1}2-\binom{p^j-1}{p^{j-1}-p^i} \right] \cdot
  (1-\delta_{i=j-1}) \\
  &+ \Big(m'+r'+1-\sum_{j'} (d'_{j'})^{p^i+1}\Big) \cdot \left[
    \binom{p^j-1}2-\binom{p^j-1}{p^{j-1}+1}\right] \cdot (i \times
  i')^* x_1^m x_2^{m'}
  \end{aligned}
\end{multline*}
{(Here $\delta_P$ equals $1$ if $P$ is true and equals $0$ otherwise.)  Granville's theorem can be used to show that the first three binomial coefficients are congruent to $1$ mod~$p^2$ while the last is congruent to $1-p$ mod~$p^2$ so:}
\begin{gather*}
  \B\pi_*\s_{n,n'}(\mrm TE_2) \equiv\; g^* \Bi_*\s_{n,n'}(\eta)
  + \Big(m'+r'+1-\sum_{j'} (d'_{j'})^{p^i+1}\Big) \cdot p \cdot (i
  \times i')^* x_1^m x_2^{m'} \mod p^2
  \intertext{By Fermat's little theorem:}
  \Big(m'+r'+1-\sum_{j'} (d'_{j'})^{p^i+1}\Big) \equiv \Big(m'+r'+1-\sum_{j'} (d'_{j'})^2\Big) \mod p
\end{gather*}
  {Recall that the degrees $(d'_1,\dots,d'_{r'})$ are chosen (say using Lemma~\ref{lemma:simple-sum-of-squares}) to make the latter quantity equal $-4n_f$ (since this makes $\p_1(\mrm{T}E_2)=0$). So the particular degrees chosen are irrelevant here and:}
  \begin{align*}
  \B\pi_*\s_{n,n'}(\mrm TE_2) &\equiv\; g^* \Bi_*\s_{n,n'}(\eta) -4n_f \cdot p \cdot (i \times
  i')^* x_1^m x_2^{m'} \mod p^2 \\
  \intertext{By Part (2) of Corollary~\ref{cor:sn1n2eta} below $g^* \Bi_*\s_{n,n'}(\eta) \equiv 8p\cdot n_f^{m+m'} \cdot (i \times i')^* x_1^m x_2^{m'}$ mod $p^2$ so:}
  \B\pi_*\s_{n,n'}(\mrm TE_2) &\equiv\; (8n_f^{p^i+p^j-8}-4n_f) \cdot p \cdot (i \times i')^*
  x_1^m x_2^{m'} \mod p^2
  \intertext{By Fermat's little theorem:}
  \B\pi_*\s_{n,n'}(\mrm TE_2) &\equiv\; 4n_f(2n_f^{-7}-1) \cdot p \cdot (i \times i')^* x_1^m
  x_2^{m'} \mod p^2 \\
  \intertext{Since $W$ is a product of complete intersections, $(i \times i')^*x_1^mx_2^{m'}$ equals $(\prod_j d_j) (\prod_{j'} d'_{j'})$ times the fundamental class $[W]$, and the degrees are all chosen to be nonzero mod~$p$. Determining the roots of the polynomial $n_f^7-2$ mod~$p$ is a delicate task, but certainly if $n_f \equiv 1$ mod~$p$ then:}
  \B\pi_*\s_{n,n'}[E_2] &\not\equiv\; 0 \mod p^2
\end{align*}

\begin{lemma}
  \begin{align*}
    \ord_p \; \s_{n+n'}[E_1] \le \ord_p \; \s_{n+n'}[E_2]
  \end{align*}
\end{lemma}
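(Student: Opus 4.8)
The plan is to reduce both $p$-adic orders to $p$-adic orders of differences of binomial coefficients, and then to evaluate those two orders exactly.

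First I would unwind the construction by means of Proposition~\ref{prop:pushForwardBinom}. Put $N := p^i + p^j = 2(n_1+n_2)$. Since it was shown just before Proposition~\ref{prop:pushForwardBinom} that $\s_n[E]$ depends only on the class $\s_n(\eta)$, that proposition (applied with $2n = N$) gives, for a $\OP^2$ bundle $E$ built over a base $W = V^m \times V^{m'}$ with $m + m' = N - 8$,
\begin{align*}
  \s_{n_1+n_2}[E] = 2\, n_f^{N-8} \left[ \binom{N}{2} - \binom{N}{m+4} \right] D,
\end{align*}
where $D := \bigl(\prod_j d_j\bigr)\bigl(\prod_{j'} d'_{j'}\bigr)$, because $W$ extracts the coefficient of $x_1^m x_2^{m'}$ and $(i \times i')^* x_1^m x_2^{m'} = D\,[W]$. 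For $E_1$ one has $m = p^j - 3$, so $m + 4 = p^j + 1$; for $E_2$ one has $m = p^{j-1} - 3$, so $m + 4 = p^{j-1} + 1$. As $2$, $n_f$, and the complete-intersection degrees $d_j, d'_{j'}$ (all powers of $2$ and $3$) are coprime to $p$, the lemma is equivalent to
\begin{align*}
  \ord_p \left[ \binom{N}{2} - \binom{N}{p^j+1} \right] \;\le\; \ord_p \left[ \binom{N}{2} - \binom{N}{p^{j-1}+1} \right].
\end{align*}

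Next I would evaluate the three binomial coefficients modulo $p^{i+1}$. From $\binom{N}{2} = p^i \cdot \tfrac{1+p^{j-i}}{2} (p^i + p^j - 1)$ (using that $1 + p^{j-i}$ is even and $i \ge 1$) one reads off $\binom{N}{2} \equiv -\tfrac12 p^i \pmod{p^{i+1}}$, so $\ord_p \binom{N}{2} = i$. For $\binom{N}{p^j+1} = \binom{N}{p^i-1}$, the identity $\binom{N}{p^i-1} = \tfrac{N}{p^j+1} \binom{N-1}{p^i-1}$ together with Lucas's theorem settles it: the base-$p$ digits of $p^i - 1$ are $p-1$ in positions $0,\dots,i-1$ and $0$ elsewhere, each dominated by the corresponding digit of $N - 1 = p^i + p^j - 1$, whence $\binom{N-1}{p^i-1} \equiv 1 \pmod p$; combined with $\tfrac{N}{p^j+1} = p^i \cdot \tfrac{1+p^{j-i}}{p^j+1}$ this yields $\binom{N}{p^j+1} \equiv p^i \pmod{p^{i+1}}$. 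Subtracting, $\binom{N}{2} - \binom{N}{p^j+1} \equiv -\tfrac32 p^i \pmod{p^{i+1}}$, and because $p > 3$ the coefficient $-\tfrac32$ is a unit mod $p$, so the left-hand order above is exactly $i$. For the remaining coefficient I would simply count carries (Kummer's theorem, as in the proof of Lemma~\ref{lemma:gcdbinom}): adding $p^{j-1} + 1$ to $N - p^{j-1} - 1 = (p^i - 1) + (p-1)p^{j-1}$ causes a carry out of each of positions $0,\dots,i-1$ (raising $p^i - 1$ to $p^i$) and one further carry out of position $j-1$ (since $p^{j-1} + (p-1)p^{j-1} = p^j$), for $i+1$ carries in all; hence $\ord_p \binom{N}{p^{j-1}+1} = i+1 > i = \ord_p \binom{N}{2}$, and the right-hand order above is also exactly $i$. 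Thus both orders equal $i$, which is stronger than the claimed inequality.

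The one step demanding genuine care is this refinement of $\ord_p \binom{N}{p^j+1}$: Kummer's theorem supplies only the valuation $i$, and one must rule out the possibility that $\binom{N}{2} - \binom{N}{p^j+1}$ is divisible by $p^{i+1}$. The factorisation $\binom{N}{p^i-1} = \tfrac{N}{p^j+1} \binom{N-1}{p^i-1}$ followed by a one-line application of Lucas's theorem is precisely what produces the residue mod $p^{i+1}$. Everything else is the bookkeeping of determining which monomial of Proposition~\ref{prop:pushForwardBinom} each base manifold extracts, and noting that the residual scalar factors are prime to $p$.
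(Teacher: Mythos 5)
Your proof is correct, and it supplies a refinement that the paper's own proof does not spell out. Both arguments reduce via Proposition~\ref{prop:pushForwardBinom} to comparing $p$-adic orders of $\binom{N}{2}-\binom{N}{m+4}$ with $N=p^i+p^j$, the remaining scalar factors being prime to $p$. Your bookkeeping is right: $E_1$ (with $m=p^j-3$) contributes $\binom{N}{2}-\binom{N}{p^j+1}$ and $E_2$ (with $m=p^{j-1}-3$) contributes $\binom{N}{2}-\binom{N}{p^{j-1}+1}$. The paper's displayed inequality instead shows $\binom{N}{p^{j-1}+1}$ on the left and $\binom{N}{p^j+3}$ on the right, which transposes the roles of $E_1$ and $E_2$ and has $p^j+3$ where $p^j+1$ belongs; your identification is the one that actually matches the stated choices of $(m,m')$.

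The substantive refinement concerns the $E_1$ side. Kummer gives $\ord_p\binom{N}{2}=i=\ord_p\binom{N}{p^j+1}$, which yields only $\ord_p\,\s_{n_1+n_2}[E_1]\ge i$; but the lemma demands the \emph{upper} bound $\le\ord_p\,\s_{n_1+n_2}[E_2]$, which by Kummer (orders $i$ versus $i+1$) equals $i$ exactly. Your factorization $\binom{N}{p^i-1}=\tfrac{N}{p^j+1}\binom{N-1}{p^i-1}$ followed by Lucas gives $\binom{N}{p^j+1}\equiv p^i\pmod{p^{i+1}}$, so $\binom{N}{2}-\binom{N}{p^j+1}\equiv-\tfrac32p^i$, a unit multiple of $p^i$ since $p>3$; this pins the order to exactly $i$. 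The paper only gestures at Granville's theorem in a parenthetical rather than carrying out such a residue computation, so your proof is both complete and reaches the slightly stronger conclusion that the two orders are equal.
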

\begin{proof}
  Assuming as we did above that $n_f \equiv 1$ mod~$p$, it suffices by Proposition~\ref{prop:pushForwardBinom} to show that:
  \begin{align*}
    \ord_p \left[\binom{p^i+p^j}2-\binom{p^i+p^j}{p^{j-1}+1}\right]
    \le \ord_p \left[ \binom{p^i+p^j}2-\binom{p^i+p^j}{p^j+3}\right]
  \end{align*}
  By Kummer's Theorem:
  \begin{align*}
    \ord_p \binom{p^i+p^j}2 &= i &
    \ord_p \binom{p^i+p^j}{p^{j-1}+1} &= i+1 &
    \ord_p \binom{p^i+p^j}{p^j+3} &= i
  \end{align*}
  So the difference of the 1st and 2nd binomial coefficients has order $i$ while the difference of the 1st and 3rd binomial coefficients has order $\ge i$ (in fact it has order $i+2$, as can be shown using Granville's theorem).
\end{proof}

\newcommand{\m}{n}
\newcommand{\n}{n'}

The method used to prove Proposition~\ref{prop:pushForwardBinom} can
be used to establish the following formula (which holds for any integers $\m>\n$, not just the integers we are concerned with here).

\begin{proposition}
  \label{prop:smnformula}
  \begin{multline*}
    f^* \Bi_* \s_{\m,\n}(\eta) =\\ 
    \begin{aligned}
      -4 n_f^{\m+\n-8} \sum_{k=2}^{\m+\n-1} \Biggl[
      \binom{2\m}{2k} &+ \binom{2\n}{2k} +\binom{2\n}{2k-2\m} +
      \binom{2\m}{2k-2\n} \\
      &+ \tfrac12 \sum_{l=0}^k (-1)^l \binom{2\n}{l} \binom{2\m-2\n}{2k-2l} \\
      &- \binom{2\m}2 \sum_{l=1}^{\m-1} \binom{2\n}{2k-2l}
      -\binom{2\n}2\sum_{l=1}^{\n-1}\binom{2\m}{2k-2l} \\
      & -\binom{2\n}2(1-\delta_{\n \le k \le
        \m}) -\binom{2\m}2(1+\delta_{\n+1 \le k \le \m-1}) \\
      &+ \tfrac12\binom{2\m+2\n}2 - 3 \delta_{k \in \{\m,\n\}} \Biggr]
      x_1^{2k-4} x_2^{2\m+2\n-2k-4}
    \end{aligned}
  \end{multline*}
  where $\delta_P$ equals $1$ if $P$ is true and equals $0$ otherwise.
\end{proposition}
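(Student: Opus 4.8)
The plan is to mimic the proof of Proposition~\ref{prop:pushForwardBinom} almost verbatim, tracking two variables $n_1$ and $n_2$ instead of one. First I would invoke Corollary~\ref{spin9f4pushforward} to write $\Bi_{T,\Ff}^*\Bi_{\Spin(9),\Ff*}\s_{n_1,n_2}(\eta)$ as a sum of three Weyl-group translates of $\s_{n_1,n_2}(r_1^2,\dots,r_8^2)/\prod_i r_i$, where the eight complementary roots are $r_i=\tfrac12(e_1\pm e_2\pm e_3\pm e_4)$. As in Proposition~\ref{prop:pushForwardBinom}, I would then pull back along $f$ so that $(e_1,e_2,e_3,e_4)\mapsto n_f(x_1,x_1,x_2,-x_2)$; homogeneity lets me set $n_f=1$ and $(x_1,x_2)=(x,1)$, restoring the factor $n_f^{2(n_1+n_2)-8}$ at the end. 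Under this substitution the three orbits of roots become, respectively, $\{\tfrac12(x\pm x\pm 1\pm 1)\}$ and the two sets listed at the end of \S\ref{section:pontrjagin} specialized at $(e_1,e_2,e_3,e_4)=(x,x,1,-1)$. The Euler-class denominators $\prod_i r_i$ specialize to explicit monomials in $x$ (with the expected $x^4$ in the denominator from the first orbit, matching the $-\tfrac1{x^4}$ prefactor in Proposition~\ref{prop:pushForwardBinom}).

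The key identity driving the $\s_n$ computation was $\s_n(a,b)=a^n+b^n$; here I would instead use $\s_{n_1,n_2}(a_1,\dots,a_8)=\tfrac12\big[p_{n_1}p_{n_2}-p_{n_1+n_2}\big]$ where $p_k=\sum a_l^k$ is the $k$-th power sum in the $r_i^2$ — more precisely $\s_{n_1,n_2}=\sum_{l\ne l'} a_l^{n_1}a_{l'}^{n_2}$ when $n_1\ne n_2$, so it equals $p_{n_1}p_{n_2}-p_{n_1+n_2}$. This reduces everything to computing, for each orbit, the two power sums $\sum r_i^{2k}$ and the product $\sum r_i^{2n_1}\cdot\sum r_i^{2n_2}$; each power sum $\sum_i r_i^{2k}$ over a four-sign orbit is an elementary binomial-type sum of the shape appearing in Proposition~\ref{prop:pushForwardBinom} (e.g.\ $\sum \tfrac1{2^{2k}}(x\pm1\pm1\pm1)^{2k}$ expands via the binomial theorem into the $\binom{2m}{2k}$-type terms). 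Carrying the bookkeeping through the three orbits, then forming the three contributions $g^*\Bi_*\s_{n_1,n_2}(\eta)$, $\s_{n_1}(\mrm TW)\cdot g^*\Bi_*\s_{n_2}(\eta)$, etc., and collecting by powers of $x$, should produce exactly the stated sum: the $\binom{2m}{2k}+\binom{2n}{2k}$ terms from the diagonal power sums, the $\binom{2n}{2k-2m}$ shifted terms from cross-orbit contributions, the $\sum_l(-1)^l\binom{2n}{l}\binom{2m-2n}{2k-2l}$ term from the mixed product in the second/third orbit, the $\binom{2m}2\sum\binom{2n}{2k-2l}$ geometric-series tails (exactly as the $\binom{2n}2(x^2+x^4+\cdots)$ tail arose before), and the various $\delta$-correction terms from the endpoints $k\in\{m,n\}$ and the truncation ranges of the inner sums.

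The main obstacle will be the combinatorial bookkeeping of the $\delta$-corrections and the ranges of the inner summation indices: in Proposition~\ref{prop:pushForwardBinom} there was a single endpoint adjustment ($-2x^{2n}$, the "$2\binom{2n}2-2$" term), but here the interference between $n_1$, $n_2$ and $n_1+n_2$ produces several overlapping geometric series whose truncations must be aligned so that the $x^{2k-4}$ coefficient is correct for every $k$ in $2\le k\le n_1+n_2-1$, not merely generically. Concretely, the terms $-\binom{2\n}2(1-\delta_{\n\le k\le\m})$ and $-\binom{2\m}2(1+\delta_{\n+1\le k\le\m-1})$ and $-3\delta_{k\in\{\m,\n\}}$ encode exactly where the naive power-series manipulations over- or under-count, and verifying these requires separating out the cases $k<n_2$, $n_2\le k\le n_1$, $k>n_1$ (using $n_1>n_2$). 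I would handle this the way Proposition~\ref{prop:pushForwardBinom} did: manipulate everything at the level of rational functions in $x$ (infinite geometric series), clear the $x^{-4}$, observe that the putative polynomial has degree matching $2(n_1+n_2)-8$, and read off coefficients. The only genuinely new input beyond that proof is the power-sum identity for $\s_{n_1,n_2}$ and the patience to expand three four-sign orbits rather than reuse the two-variable collapse that made the $\s_n$ case so short.
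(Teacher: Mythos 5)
Your plan follows exactly the route the paper indicates: the paper gives no proof beyond saying that ``the method used to prove Proposition~\ref{prop:pushForwardBinom} can be used,'' and your proposal --- apply Corollary~\ref{spin9f4pushforward}, specialize along $f$ with $(e_1,e_2,e_3,e_4)\mapsto n_f(x_1,x_1,x_2,-x_2)$, reduce $\s_{n_1,n_2}$ to power sums via $\s_{n_1,n_2}=p_{n_1}p_{n_2}-p_{n_1+n_2}$ (which, as you later correct yourself, carries no $\tfrac12$ since $n_1>n_2$), and read off coefficients after the rational-function/geometric-series manipulations --- is precisely that method. One bookkeeping slip worth fixing before you execute: under the specialization $(e_1,e_2,e_3,e_4)\mapsto(x,x,1,-1)$ it is the \emph{third} orbit $\sigma_4\sigma_3\sigma_4(\{r_i\})=\{x,x,1,-1,x,x,1,1\}$ whose Euler-class product is the clean $-x^4$; the first two orbits each contain two roots that specialize to $0$, so their Euler-class products vanish, and the $1+x^2+x^4+\cdots$ factor in the proof of Proposition~\ref{prop:pushForwardBinom} is the residue of the resulting $0/0$ cancellation between those two orbits rather than a literal monomial denominator --- handling this cancellation cleanly is where the real care is needed, though it is the same care already required in the $\s_n$ case.
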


\begin{corollary}
  \label{cor:sn1n2eta} \mbox{}
  \begin{enumerate}
  \item If $(m,m')=(2n-2,2n'-6)=(p^j-3,p^i-5)$ then the coefficient of $x_1^mx_2^{m'}$ in $f^*\Bi_*\s_{n,n'}(\eta)$ is congruent to $0$ mod~$p^2$.
  \item If $(m,m')=(p^{j-1}-3,p^j-p^{j-1}+p^i-5)$ then the coefficient of $x_1^mx_2^{m'}$ in $f^*\Bi_*\s_{n,n'}(\eta)$ is congruent to $8p \cdot n_f^{m+m'}$ mod~$p^2$.
  \end{enumerate}
\end{corollary}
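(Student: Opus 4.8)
The plan is to read both parts off the explicit formula of Proposition~\ref{prop:smnformula}. Writing $2n_1=p^j-1$ and $2n_2=p^i+1$, so that $2n_1-2n_2=p^j-p^i-2$ and $2n_1+2n_2=p^i+p^j$, the coefficient of $x_1^{2k-4}x_2^{2n_1+2n_2-2k-4}$ in $f^*\Bi_*\s_{n_1,n_2}(\eta)$ is $-4n_f^{\,m+m'}$ times the bracketed sum of that proposition evaluated at the given $k$. Since $2k-4=m$ forces $k=(p^j+1)/2$ in part~(1) and $k=(p^{j-1}+1)/2$ in part~(2) (and in both cases $2\le k\le n_1+n_2-1$, using $p>3$ and $j>i\ge1$), everything reduces to evaluating that bracketed sum modulo $p^2$ at these two values of $k$ and checking that it is $\equiv0$ in part~(1) and $\equiv-2p$ in part~(2).

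First I would dispose of the bulk of the terms by $p$-divisibility alone. By Kummer's theorem a binomial coefficient $\binom{N}{K}$ is divisible by $p^2$ as soon as adding $K$ to $N-K$ in base $p$ produces two carries; inspecting the base-$p$ digits of $p^j-1$, $p^i+1$ and $p^i+p^j$ one sees that, for the two values of $k$ in question, $\binom{2n_1}{2k}$ and $\binom{2n_2}{2k}$ vanish outright and that almost every summand of $\binom{2n_1}2\sum_l\binom{2n_2}{2k-2l}$ and of $\binom{2n_2}2\sum_l\binom{2n_1}{2k-2l}$ is $\equiv0\bmod p^2$. The terms that survive — $\binom{2n_2}{2k-2n_1}$, $\binom{2n_1}{2k-2n_2}$, the constants $\binom{2n_1}2$, $\binom{2n_2}2$, $\binom{2n_1+2n_2}2$, the $\delta$-terms, and the few leftover summands of the two single sums — are all of the shape $\binom{p^a\pm1}{\mathrm{small}}$, $\binom{2p^a}{\mathrm{small}}$ or $\binom{p^i+p^j}{\mathrm{small}}$, and their residues modulo $p^2$ I would read off using the elementary congruences already employed in this section (for instance $\binom{p^j-1}2\equiv1-\tfrac32p^j$) together with Granville's theorem, exactly as in the proof of Proposition~\ref{prop:gcddiffbinom}.

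The genuinely delicate term is the convolution $\tfrac12\sum_{l=0}^{k}(-1)^l\binom{2n_2}{l}\binom{2n_1-2n_2}{2k-2l}$. I would read it as a generating-function coefficient: it is half the coefficient of $x^{2k}$ in $(1-x)^{2n_2}\cdot\tfrac12\big[(1+x)^{2n_1-2n_2}+(1-x)^{2n_1-2n_2}\big]$. Reducing modulo $p$ via $(1+x)^{p^a}\equiv1+x^{p^a}$, the factor $(1-x)^{p^i+1}$ becomes $(1-x^{p^i})(1-x)$ while $(1\pm x)^{p^j-p^i-2}$ becomes $(1\pm x^{p^j})(1\pm x)^{-p^i-2}$; multiplying out these sparse series and extracting the relevant coefficient shows this term is $\equiv0\bmod p$ in both cases. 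Its precise value modulo $p^2$ — which is where the $8p$ of part~(2) ultimately originates — I would obtain by carrying the same sparse computation one order further, or equivalently by applying Granville's theorem to the individual binomial coefficients that contribute. Assembling all the surviving linear-in-$p$ contributions then gives $0$ in part~(1) and $-2p$ in part~(2).

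The main obstacle is not a single hard idea but the bookkeeping: on the order of a dozen terms, several of them themselves sums, each pushed to precision $p^2$, with the convolution and the two single sums $\sum_l\binom{2n_a}{2k-2l}$ requiring Granville's theorem rather than Kummer or Lucas alone. The structural fact that keeps this honest is that $f^*\Bi_*\s_{n_1,n_2}(\eta)$ is an honest polynomial with $\ZZ[n_f]$ coefficients, so the parts of all the terms that are constant in $p$ must cancel automatically; the content of the corollary lies entirely in the surviving $O(p)$ piece, and that is where essentially all the risk of a sign or off-by-one slip lives.
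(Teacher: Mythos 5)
Your outline matches the paper's at the top level: evaluate the bracket of Proposition~\ref{prop:smnformula} at $k=(p^j+1)/2$ (part~1) and $k=(p^{j-1}+1)/2$ (part~2), dispatch the routine terms with Kummer and Granville, and concentrate on the alternating convolution. But the sketch has three concrete problems that go beyond bookkeeping.

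First, the claim that $\binom{2n_1}{2k}$ and $\binom{2n_2}{2k}$ ``vanish outright'' is true only in part~(1), where $2k=p^j+1$ exceeds both $2n_1=p^j-1$ and $2n_2=p^i+1$. In part~(2) one has $2k=p^{j-1}+1<2n_1$, so $\binom{2n_1}{2k}=\binom{p^j-1}{p^{j-1}+1}$ is nonzero (indeed $\equiv 1-p\bmod p^2$, as the paper records), and $\binom{2n_2}{2k}=\binom{p^i+1}{p^{j-1}+1}$ equals $1$ when $i=j-1$. Both quantities enter the final bracket; dropping them breaks part~(2).

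Second, your generating function for the convolution is set up incorrectly. One wants
\begin{align*}
  A \;=\; \tfrac12 \sum_{l} (-1)^l \binom{2n_2}{l}\binom{2n_1-2n_2}{2k-2l},
\end{align*}
which is half the coefficient of $x^{2k}$ in
\begin{align*}
  (1-x^2)^{2n_2}\cdot\tfrac12\big[(1+x)^{2n_1-2n_2}+(1-x)^{2n_1-2n_2}\big],
\end{align*}
since the index $l$ runs over all integers but contributes $x^{2l}$, while $2k-2l$ is forced even. The product you wrote, with $(1-x)^{2n_2}$ in place of $(1-x^2)^{2n_2}$, has $x^{2k}$-coefficient $\sum_{l'}\binom{2n_2}{2l'}\binom{2n_1-2n_2}{2k-2l'}$ (only even $l$ surviving), which is a different sum.

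Third, ``carrying the sparse computation one order further'' understates what the mod-$p^2$ evaluation of $A$ (and of $B$ in part~2) actually requires. The paper reduces $A$ to $(-1)^{(p-1)/2}\binom{p-1}{(p-1)/2}$ via a telescoping identity and then invokes Morley's congruence $(-1)^{(p-1)/2}\binom{p-1}{(p-1)/2}\equiv 2^{2(p-1)}\bmod p^2$, with Wolstenholme's theorem doing further work along the way. Your proposal does not mention Morley, and there is no reason to expect iterating the Frobenius expansion $(1\pm x)^p\equiv 1\pm x^p + p(\cdots)$ one order to produce the identification $A\equiv 2^p-1-\tfrac12p^i\bmod p^2$ without some such nontrivial input. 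Until that piece is supplied, the proposal reproduces the easy cancellations but not the step where the $8p$ of part~(2) actually comes from.
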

\begin{proof}[Proof of part (1) of Corollary \ref{cor:sn1n2eta}]
  If $(m,m')=(2n-2,2n'-6)$ then the coefficient of $x_1^mx_2^{m'}$ is the $k=\m+1$ summand in Proposition~\ref{prop:smnformula}. It is not difficult to show that this summand is congruent mod~$p^2$ to:
  \begin{align*}
    4n_f^{(p^j+p^i)/2-8}
    \Big[0 &+ 0 + \tfrac12p^i + 1 \\
    &+ A \\
    &- (2^{p^i}-1-\tfrac12p^i) + \tfrac14p^i \\
    &-\tfrac12p^i -1 \\
    &-\tfrac14p^i-0\Big]
  \end{align*}
  where:
  \begin{align*}
    A = \tfrac12 \!\! \sum_{l=0}^{(p^j+1)/2} \!\!\! (-1)^l \binom{p^i+1}l \binom{p^j-p^i-2}{p^j-2l+1}
  \end{align*}
  Due to tidy pairwise cancellations, \emph{all that remains is to show that $A \equiv 2^p-1-\tfrac12p^i$ mod~$p^2$.} (Note that $n^{p^2-p} \equiv 1$ mod~$p^2$ for any integer $n \not\equiv 0$ mod~$p$ since the multiplicative group $(\ZZ/p^2)^\times$ has order $p^2-p$; it follows by induction that $n^{p^i} \equiv n^p$ mod~$p^2$ for any $i>0$.)

  \medskip

  (a) If $i>1$ then Granville's theorem can be used to show that:
  \begin{align*}
    A &\equiv \!\! \sum_{r=0}^{(p-1)/2} \!\!\! (-1)^r \binom{p}{r}
    \mod p^2 \\
    \intertext{(The key is that:
      \begin{align*}
        \binom{p^i+1}{l} &\equiv
        \left\{
          \begin{array}{cl}
            \dbinom{p+1}{l} & \text{if $i=1$} \\
            \dbinom{p}{r} & \parbox{\widthof{$l=rp^{i-1}+1$ with $0\le r\le p$}}{if $i>1$ and
              $l=rp^{i-1}$ or \\ $l=rp^{i-1}+1$ with $0\le r\le p$}  \\
            0 & \text{otherwise}
          \end{array}
        \right.
      \end{align*}
      mod~$p^2$.)
    }
    \intertext{By the identity $\sum_{j=0}^k(-1)^j\binom{n}{j}=(-1)^k\binom{n-1}k$ (proved inductively using Pascal's rule):}
    A&\equiv (-1)^{(p-1)/2} \binom{p-1}{(p-1)/2} \mod p^2 \\
    \intertext{By the eponymous congruence of Morley's ingenious 1895 paper \cite{morley-1895}:}
    A &\equiv 2^{2(p-1)} \mod p^2 \\
    \intertext{The final step is to show that $2^{2(p-1)} \equiv 2^p-1$ mod~$p^2$. Write:}
    2^{2(p-1)} &= (2^{p-1}+1)(2^{p-1}-1)+1 \\
    \intertext{By Fermat's little theorem the two factors are congruent to $2$ and $0$ mod~$p$ respectively, so:}
    A &\equiv 2 (2^{p-1}-1)+1 \mod p^2 \\
    &= 2^p-1
  \end{align*}

  \medskip

  (b) If $i=1$ then Granville's theorem can be used to show that:
  \begin{align*}
    A \; &\equiv p \;+\; \tfrac12 \!\! \sum_{l=0}^{(p-1)/2} \!\!\! (-1)^l
    \binom{p+1}l \binom{p-2}{2l-1} \mod p^2 \\
    \intertext{Since the 1st binomial coefficient is congruent to 0 mod~$p$ for $1<l<p$, we can simplify the 2nd binomial coefficient mod~$p$ via the congruence:
      \begin{align*}
        (1+x)^{p-2} &\equiv (1+x^p)(1+x)^{-2} = (1+x^p) \sum_{k=0}^\infty (-1)^k (k+1) x^k \mod p
      \end{align*}
      and, subtracting a correction factor, obtain:
    }
    A &\equiv \tfrac12p \;-\!\! \sum_{l=0}^{(p-1)/2} \!\!\! (-1)^l \binom{p+1}l \cdot l \mod p^2 \\
    \intertext{By the identity $\sum_{j=0}^k(-1)^j\binom{n}{j} j = (-1)^k \binom{n-2}{k-1} n$ (proved by writing $\binom{n}{j}=\binom{n-1}{j-1}\frac{n}{j}$ and then applying the earlier cited identity $\sum_{j=0}^k(-1)^j\binom{n}{j}=(-1)^k\binom{n-1}{k}$) and by the identity $\binom{n-2}{k-1} = \binom{n-2}{k}\frac{k}{n-k-1}$:}
    A &\equiv \tfrac12p - (-1)^{(p-1)/2} \binom{p-1}{(p-1)/2} \cdot \frac{p^2-1}{p+1} \mod p^2 \\
    \intertext{By Morley's congruence:}
    A &\equiv \tfrac12p + 2^{2(p-1)} (1-p) \mod p^2 \\
    \intertext{And again since $2^{2(p-1)}\equiv2^p-1$ mod~$p^2$:}
    A &\equiv 2^p-1-\tfrac12p \mod p^2 \qedhere
  \end{align*}
\end{proof}

\begin{proof}[Proof of part (2) of Corollary \ref{cor:sn1n2eta}]
  If $(m,m')=(p^{j-1}-3,p^j-p^{j-1}+p^i-5)$ then the coefficient of $x_1^mx_2^{m'}$ is the $k=\tfrac12(p^{j-1}+1)$ summand in Proposition~\ref{prop:smnformula}. It is not difficult to show that this summand is congruent mod~$p^2$ to:
  \begin{align*}
      4n_f^{(p^j+p^i)/2-8}
      \big[ (1-p) &+ \delta_{i=j-1} + 0 + 1 \\
      &+ B \\
      &- (2^p-\delta_{i=j-1}) - (-\tfrac14p^i) \\
      &-0 -(2-\delta_{i=j-1}) \\
      &-\tfrac14p^i -3\delta_{i=j-1} \big]
  \end{align*}
  where:
  \begin{align*}
    B &= \tfrac12 \!\!\! \sum_{l=0}^{(p^{j-1}+1)/2} \!\!\!\!\!\! (-1)^l
    \binom{p^i+1}l \binom{p^j-p^i-2}{p^{j-1}-2l+1}
  \end{align*}
  Due to tidy cancellations, \emph{all that remains is to show that $B \equiv 2^p-p$ mod~$p^2$.}

  \medskip

  (a) If $i>1$ then the above stated fact about $\binom{p^i+1}l$ can be used to show that:
  \begin{gather*}
    B \equiv \tfrac12 \sum_{r=0}^p (-1)^r \binom{p}{r} \binom{p^j-p^i-2}{p^{j-1}-2rp^{i-1}+1} \mod p^2
    \intertext{The 1st binomial coefficient is congruent to $0$ mod~$p$ for $0<r<p$. The 2nd binomial coefficient is congruent to $0$ mod~$p$ if $0<r<\tfrac12(p+1)$ and congruent to $-2$ mod~$p$ if $\tfrac12(p+1)\le r<p$. So:}
    B \equiv \tfrac12 \binom{p^j-p^i-2}{p^{j-1}+1} 
      \quad-\!\!\!{\sum_{r=(p+1)/2}^{p-1} \!\!\! (-1)^r \binom{p}{r}}
      \quad-\tfrac12\binom{p^j-p^i-2}{p^{j-1}-2p^i+1} \mod p^2
    \end{gather*}
    {Granville's \& Wolstenholme's theorems can be used to simplify the first and last terms mod~$p^2$ while the identity $\sum_{j=0}^k (-1)^j\binom{n}{j} = (-1)^k\binom{n-1}k$ can be used to simplify the summation, yielding:
      \begin{multline*}
        B \equiv (1+\delta_{i=j-1}-p\delta_{i\ne j-2})
        \quad {-1+(-1)^{(p-1)/2}\binom{p-1}{(p-1)/2}} \quad
        -(\delta_{i=j-1}+p\delta_{i=j-2}-1) \\
        \mod p^2
      \end{multline*}
      By Morley's congruence:}
    \begin{align*}
      B &\equiv 2^{2(p-1)}+1-p \mod p^2
      \intertext{And again since $2^{2(p-1)} \equiv 2^p-1$ mod~$p^2$:}
      B &\equiv 2^p-p \mod p^2
    \end{align*}

    (b) If $i=1$ then:
    \begin{align*}
      B &= \tfrac12 \sum_{l=0}^{p+1} (-1)^l \binom{p+1}l
      \binom{p^j-p-2}{p^{j-1}-2l+1}
      \intertext{Granville's theorem can be used to show that:}
      B&\equiv \tfrac12(p+2) + \tfrac12 \!\!\! \sum_{l=0}^{(p-1)/2}
      \!\!\!  (-1)^l \binom{p+1}l \binom{p-2}{2l-1} \mod p^2
      \intertext{This summation appeared above in the \textsc{Proof of part (1) of Corollary~\ref{cor:sn1n2eta}}, part (b). In fact $B \equiv A+1-\tfrac12p$ mod~$p^2$. Since we concluded that $A \equiv 2^p-1-\tfrac12p^i$ mod $p^2$, it follows that:}
    &B\equiv 2^p-p \mod p^2 \qedhere
    \end{align*}
\end{proof}

\section*{Acknowledgments}

Thanks to Burt Totaro for his inspiration \& support. Thanks to Neil Strickland for pointing out in the early stages of this work the encouraging fact that numbers of the form $p^i+p^j$ play a special role in the complex oriented cohomology of $\BU\bb6$. Thanks to Jack Morava, Nitu Kitchloo, Bill Browder, Bernhard Hanke, Baptiste Calm\`es, Stephanie Belcher, Volker Genz, Pankaj Vishe \& Michael Ontiveros for helpful discussions. Thanks to Haynes Miller and the referee for helpful suggestions. Thanks to the Cambridge Philosophical Society, the Cambridge Lundgren Fund \& the Max Planck Institute for Mathematics in Bonn for their generous financial support. Thanks also to Cassius Garten for providing a vegetarian oasis in a vast culinary landscape of \emph{Wurst \& Sauerbraten}. Last, but certainly not least, thanks to Jessie Codner for her infectious creativity.

\def\cprime{$'$}

\end{document}